\documentclass[12pt]{amsart}
\usepackage{amsmath,amssymb,latexsym,cancel,rotating}
\usepackage{graphicx,amssymb,mathrsfs,amsmath,color,fancyhdr,amsthm}
\usepackage[all]{xy}

\textwidth15.1cm \textheight21cm \headheight12pt
\oddsidemargin.4cm \evensidemargin.4cm \topmargin0.5cm

\addtolength{\marginparwidth}{-13mm}

\newtheorem{theorem}{Theorem}[section]
\newtheorem{lemma}[theorem]{Lemma}

\newtheorem{proposition}[theorem]{Proposition}

\newtheorem{remark}[theorem]{Remark}

  \def\leq{\leqslant}  \def\geq{\geqslant}

\def\Hom{\mbox{\rm Hom}}

\def\dim{\mbox{\rm dim}\,}

  %{\mbox{\small\rm sm}}}

\begin{document}

\title[Constructions of highest weight modules via functions]
{Constructions of highest weight modules of double Ringel-Hall algebras via functions}
\thanks{This work was supported by National Natural Science Foundation of China (No. 11701028, 11771445)}

\author[M. Zhao]{Minghui Zhao}
\address{School of Science, Beijing Forestry University, Beijing 100083, P. R. China}
\email{zhaomh@bjfu.edu.cn (M. Zhao)}

\subjclass[2010]{16G20, 17B37}

\date{\today}

\keywords{}

\bibliographystyle{abbrv}

\maketitle

\begin{abstract}
In \cite{zheng2008categorification}, Zheng studied the bounded derived
categories of constructible $\bar{\mathbb{Q}}_l$-sheaves on some algebraic stacks consisting of the representations of a enlarged quiver and categorified the integrable highest weight  modules of the corresponding quantum group by using these categories.
In this paper, we shall generalize Zheng's work to highest weight modules of a subalgebra of the double Ringel-Hall algebra associated to a quiver in a functional version.
\end{abstract}

%\setcounter{tocdepth}{1}
%\tableofcontents

\section{Introduction}
Let $\mathbf{U}$ be a quantum group and $\mathbf{U}^-$ its negative part. In \cite{Lusztig_Canonical_bases_arising_from_quantized_enveloping_algebra,Lusztig_Quivers_perverse_sheaves_and_the_quantized_enveloping_algebras},
Lusztig gave a categorification of $\mathbf{U}^-$ by using the additive category of perverse sheaves on the varieties $E_{\nu,Q}$ consisting of representations of the quiver $Q$ corresponding to $\mathbf{U}$. The set of isomorphism classes of simple perverse sheaves gives the canonical basis of $\mathbf{U}^-$.
Kashiwara gave a combinatorial construction of this basis (called global crystal basis) in \cite{Kashiwara1993Global}.

Khovanov, Lauda (\cite{Khovanov_Lauda_A_diagrammatic_approach_to_categorification_of_quantum_groups_I}) and Rouquier (\cite{Rouquier_2-Kac-Moody_algebras}) introduced the KLR algebras respectively.
The category of finitely generated projective modules of the corresponding KLR algebra gives a categorification of $\mathbf{U}^-$.
Varagnolo, Vasserot (\cite{Varagnolo_Vasserot_Canonical_bases_and_KLR-algebras}) and
Rouquier (\cite{Rouquier_Quiver_Hecke_algebras_and_2-Lie_algebras}) proved that the set of isomorphism classes of indecomposable projective modules of the KLR algebra can categorify the canonical basis.

Let $L(\omega)$ be an integrable highest weight module of $\mathbf{U}$ with highest weight $\omega$. The canonical basis of $\mathbf{U}^-$ induces the canonical basis of $L(\omega)$.
In \cite{zheng2008categorification}, Zheng gave categorifications of $L(\omega)$ and its canonical basis by using the additive category of perverse sheaves on the varieties $E_{\nu,\hat{Q}}$ consisting of representations of the enlarged quiver $\hat{Q}$.
In \cite{Kang2012Categorification}, Kang and Kashiwara studied the category of finitely generated projective modules of the cyclotomic KLR algebra and gave categorifications of $L(\omega)$ and its canonical basis.

Associated to the category of representations of a quiver $Q$ over a finite field of $q$ elements, Ringel (\cite{Ringel_Hall_algebras_and_quantum_groups}) introduced the twisted Ringel-Hall algebra ${H}^{\ast}_q(Q)$ and its composition subalgebra. He also proved that $\mathbf{U}^-|_{v=\sqrt{q}}$ is isomorphic to the composition subalgebra of ${H}^{\ast}_q(Q)$. Green (\cite{green1995hall}) introduced the comultiplication and Xiao (\cite{xiao1997drinfeld}) introduced the antipode on ${H}^{\ast}_q(Q)$. Under these operators, ${H}^{\ast}_q(Q)$ becomes a Hopf algebra. In \cite{xiao1997drinfeld}, Xiao also introduced the double Ringel-Hall algebra ${D}_q(Q)$ of $Q$ and showed that $\mathbf{U}|_{v=\sqrt{q}}$  is a Hopf subalgebra of ${D}_q(Q)$. The representation theory of ${D}_q(Q)$ is studied by Deng and Xiao in \cite{deng2002double}. Double Ringel-Hall algebra is also studied by Sevenhant and Van den Bergh in \cite{Sevenhant_Van_den_Bergh_On_the_double_of_the_Hall_algebra_of_a_quiver}.

Compared with quantum groups, it is important to give categorifications of the Ringel-Hall algebra ${H}^{\ast}_q(Q)$ and its highest weight modules.
In \cite{XXZ_Ringel-Hall_algebras_beyond_their_quantum_groups}, Xiao, Xu and Zhao generalized Lusztig's categorifical constructions of a quantum group and its canonical basis to the generic form of the whole Ringel-Hall algebra.

Denoted by $\hat{D}_q(Q)$ the subalgebra of ${D}_q(Q)$ generated by $u^+_{i}$, $u^-_{\alpha}$ and $K_{\mu}$. In this paper, we shall generalize Zheng's work to highest weight modules of $\hat{D}_q(Q)$ in a functional version.

We shall introduce a space $\mathcal{F}_{\hat{Q}}$ of functions and three maps $\mathcal{E}^{+}_{\hat{Q},i}$, $\mathcal{E}^{-}_{\hat{Q},\alpha}$ and $\mathcal{K}^{\pm1}_{\hat{Q},i}$ on $\mathcal{F}_{\hat{Q}}$. We will prove that the maps $\mathcal{E}^{+}_{\hat{Q},i}$, $\mathcal{E}^{-}_{\hat{Q},\alpha}$ and $\mathcal{K}^{\pm1}_{\hat{Q},i}$ on $\mathcal{F}_{\hat{Q}}$ satisfy the defining relations of $u^+_{i}$, $u^-_{\alpha}$ and $K_{\pm i}$ in $\hat{D}_q(Q)$. Hence
the space $\mathcal{F}_{\hat{Q}}$ becomes a left $\hat{{D}}(Q)$-module
by defining
$$K_{\pm i}.f=\mathcal{K}^{\pm1}_{\hat{Q},i}(f)\,\,,{u^+_{i}}.f=\mathcal{E}^{+}_{\hat{Q},i}(f)\textrm{ and }{u^-_{\alpha}}.f=\mathcal{E}^{-}_{\hat{Q},\alpha}(f).$$
The key of the proof is checking the following relation
$$\mathcal{E}^{-}_{\hat{Q},\alpha}\mathcal{E}^{+}_{\hat{Q},i}-\mathcal{E}^{+}_{\hat{Q},i}\mathcal{E}^{-}_{\hat{Q},\alpha}=
  \frac{|V_i|}{a_{\alpha}}\sum_{\beta\in\mathcal{P}}a_{\beta}\mathcal{E}^{-}_{\hat{Q},\beta}
  (v^{-\langle{\beta,i}\rangle}g_{i\beta}^{\alpha}\mathcal{K}_{\hat{Q},i}-v^{\langle{\beta,i}\rangle}g_{\beta i}^{\alpha}\mathcal{K}^{-1}_{\hat{Q},i})$$
for any $\alpha\in\mathcal{P}$ and $i\in I$.

In Section 2, we shall recall the definitions of double Ringel-Hall algebras and highest weight modules. In Section 3, we shall introduce the space $\mathcal{F}_{\hat{Q}}$ and the maps $\mathcal{K}_{\hat{Q},i}$, $\mathcal{E}^{+}_{\hat{Q},ni}$ and $\mathcal{E}^{-}_{\hat{Q},\alpha}$ on $\mathcal{F}_{\hat{Q}}$. The main theorem is also given in this section. The proof of the main theorem will be given in Section 4.

\section{Double Ringel-Hall algebras}

\subsection{Ringel-Hall algebras}

In this section, we shall recall the definition of Ringel-Hall algebras (\cite{Ringel_Hall_algebras_and_quantum_groups}).

Let $Q=(I,H,s,t)$ be a quiver, where $I$ is the set of vertices, $H$ is the set of arrows and $s,t:H\rightarrow I$ are two maps sending an arrow $h\in H$ to the start $s(h)$ and terminal $t(h)$ respectively.

Let $k=\mathbb{F}_q$ be a finite field with $q$ elements.
A representation of the quiver $Q$ over $k$ is a pair $(\mathbf{V},x)$, where
\begin{enumerate}
  \item[(1)]$\mathbf{V}=\bigoplus_{i\in I}V_i$ is a finite dimensional $I$-graded vector space;
  \item[(2)]$x=(x_h)_{h\in H}\in\bigoplus_{h\in H}\Hom_k(V_{s(h)},V_{t(h)})$.
\end{enumerate}
A morphism from $(\mathbf{V},x)$ to $(\mathbf{V}',x')$ is an $I$-graded linear map $f=(f_i)_{i\in I}:\mathbf{V}\rightarrow\mathbf{V}'$ such that $f_{t(h)}x_h=x'_hf_{s(h)}$ for any $h\in H$. Denote by $\textrm{rep}_{k}Q$ the abelian category of representations of $Q$ over $k$.
For any representation $V=(\mathbf{V},x)\in\textrm{rep}_{k}Q$, the dimension vector
is defined as $\underline{\dim}{V}=\dim V_ii\in\mathbb{N}I$.

The Euler form on $\mathbb{Z}I$ is defined as
$$\langle\nu,\nu'\rangle=\sum_{i\in I}\nu_i\nu'_i-\sum_{h\in H}\nu_{s(h)}\nu'_{t(h)}$$
and the symmetric Euler form is defined as
$$(\nu,\nu')=\langle\nu,\nu'\rangle+\langle\nu',\nu\rangle$$
for any  $\nu=\sum_{i\in I}\nu_ii$ and $\nu'=\sum_{i\in I}\nu'_ii$.
It is well known that
$$\langle\underline{\dim}{V},\underline{\dim}{V'}\rangle=\dim_{k}\textrm{Hom}_{\textrm{rep}_{k}Q}(V,V')
-\dim_{k}\textrm{Ext}_{\textrm{rep}_{k}Q}(V,V'),$$
for two representations $V=(\mathbf{V},x)$ and $V'=(\mathbf{V}',x')$ in $\textrm{rep}_{k}Q$.

Let $\mathcal{P}$ be the set of isomorphism classes of objects in $\textrm{rep}_{k}Q$.
For any $\alpha\in\mathcal{P}$, choose an object $V_{\alpha}\in\textrm{rep}_{k}Q$ such that the isomorphism classes $[V_{\alpha}]$ is $\alpha$.  Denote by $a_{\alpha}$ for the order of the automorphism group of $V_{\alpha}$. For convenience, the dimension vector of $V_{\alpha}$ is still denoted by $\alpha$.

Given three elements $\alpha_1,\alpha_2$ and $\alpha$ in $\mathcal{P}$, denote by $g^{\alpha}_{\alpha_1\alpha_2}$ the number of subrepresentations $W$ of $V_{\alpha}$ such that $W\simeq V_{\alpha_2}$ and $V/W\simeq V_{\alpha_1}$ in $\textrm{rep}_{k}Q$.
Let $v=\sqrt{q}\in \mathbb{C}$.  The Ringel-Hall algebra ${H}_q(Q)$ is the $\mathbb{Q}(v)$-space with basis $\{u_{\alpha}\,\,|\,\,\alpha\in\mathcal{P}\}$ whose multiplication is given by
\begin{displaymath}
u_{\alpha_1}u_{\alpha_2}=\sum_{\alpha\in\mathcal{P}}g^{\alpha}_{\alpha_1,\alpha_2}u_{\alpha}.
\end{displaymath}
Note that ${H}_q(Q)$ is an associative $\mathbb{Q}(v)$-algebra with unit $u_{0}$, where $0$ denotes the isomorphism class of zero representation.

Define a twisted multiplication on ${H}_q(Q)$ by
\begin{displaymath}
u_{\alpha_1}\ast u_{\alpha_2}=v^{\langle\alpha_1,\alpha_2\rangle}\sum_{\alpha\in\mathcal{P}}g^{\alpha}_{\alpha_1\alpha_2}u_{\alpha},
\end{displaymath}
and ${H}^{\ast}_q(Q)=({H}_q(Q),\ast)$ is called the twisted Ringel-Hall algebra.

\subsection{Constructions of Ringel-Hall algebras via functions}\label{subsection_function1}

In this section, we shall recall the constructions of Ringel-Hall algebras via functions introduced by Lusztig (\cite{Lusztig_Canonical_bases_and_Hall_algebras,Lin_Xiao_Zhang_Representations_of_tame_quivers_and_affine_canonical_bases}).

For any $\nu\in\mathbb{N}I$, fix an $I$-graded vector space $\mathbf{V}=\bigoplus_{i\in I}V_i$ of dimension vector $\nu$. Consider the variety $$E_{\nu,Q}=\bigoplus_{h\in H}\Hom_k(V_{s(h)},V_{t(h)}).$$
The group $G_{\nu,I}=\prod_{i\in I}GL(V_i)$ acts on $E_{\nu,Q}$ by $g.x=gxg^{-1}$.
Let $\mathcal{F}_{G_{\nu,I}}(E_{\nu,Q})$ be the space of $G_{\nu,I}$-invariant constructible functions on $E_{\nu,Q}$ and $$\mathcal{F}_{{Q}}=\bigoplus_{\nu\in\mathbb{N}{I}}\mathcal{F}_{G_{\nu,I}}(E_{\nu,Q}).$$

Fix $I$-graded vector spaces $\mathbf{V}'$, $\mathbf{V}''$ and $\mathbf{V}$ of dimension vector $\alpha$, $\beta$ and $\gamma=\alpha+\beta$ respectively. Consider the following correspondence
$$
\xymatrix{E_{\alpha,Q}\times E_{\beta,Q}&E'\ar[r]^{p_2}\ar[l]_-{p_1}&E''\ar[r]^-{p_3}&E_{\gamma,Q}}.
$$
Here
\begin{enumerate}
  \item[(1)]$E''$ is the variety of all pairs $(x,\mathbf{W})$, where $x\in E_{\gamma,{Q}}$ and $\mathbf{W}$ is a $x$-stable $I$-graded vector subspace of $\mathbf{V}$ with dimension vector $\beta$,
  \item[(2)]$E'$ is the variety of all quadruples $(x, \mathbf{W}, \rho', \rho'')$, where $(x, \mathbf{W})\in E''$ and $\rho': \mathbf{V}/\mathbf{W}\cong \mathbf{V}',$ $\rho'': \mathbf{W}\cong \mathbf{V}''$ are linear isomorphisms,
  \item[(3)]$p_2$ and $p_3$ are natural projections,
  \item[(4)]$p_1(x, \mathbf{W}, \rho', \rho'')=(x',x'')$ such that
      $$x'_h\rho'_{s(h)}=\rho'_{t(h)}x_h\,\textrm{ and }\,
      x''_h\rho''_{s(h)}=\rho''_{t(h)}x_h$$
      for any $h\in H.$
\end{enumerate}

The group $G_{\gamma,I}$ acts on $E''$ by $g.(x, \mathbf{W})=(gxg^{-1}, g\mathbf{W})$ for any $g\in G_{\gamma,I}$.
The groups $G_{\alpha,I}\times G_{\beta,I}$ and $G_{\gamma,I}$ act on $E'$ by $(g_1, g_2).(x, \mathbf{W}, \rho', \rho'')=(x, \mathbf{W}, g_1\rho', g_2\rho'')$ and $g.(x, \mathbf{W}, \rho', \rho'')=(gxg^{-1}, g\mathbf{W}, \rho'g^{-1}, \rho''g^{-1})$ for any $(g_1, g_2)\in G_{\alpha,I}\times G_{\beta,I}$ and $g\in G_{\gamma,I}$. The map $p_1$ is $G_{\alpha,I}\times G_{\beta,I}\times G_{\gamma,I}$-equivariant ($G_{\gamma,I}$ acts on $E_{\alpha,Q}\times E_{\beta,Q}$ trivially)  and $p_2$ is a principal $G_{\alpha,I}\times G_{\beta,I}$-bundle.

Consider the following map $$\textrm{Ind}: \mathcal{F}_{G_{\alpha,I}\times G_{\beta,I}}(E_{\alpha,Q}\times E_{\beta,Q}) \rightarrow\mathcal{F}_{G_{\gamma,I}}(E_{\gamma,Q}),$$ which is the composition of the following maps
$$
\xymatrix{\mathcal{F}_{G_{\alpha,I}\times G_{\beta,I}}(E_{\alpha,Q}\times E_{\beta,Q})\ar[r]^-{p^*_1}&\mathcal{F}_{G_{\alpha,I}\times G_{\beta,I}\times G_{\gamma,I}}(E')\ar[r]^-{(p^*_2)^{-1}}&\mathcal{F}_{G_{\gamma,I}}(E'')\ar[r]^-{(p_3)_{!}}&\mathcal{F}_{G_{\gamma,I}}(E_{\gamma,Q})}.
$$
For two functions $f_1\in\mathcal{F}_{G_{\alpha,I}}(E_{\alpha,Q})$ and $f_2\in\mathcal{F}_{G_{\beta,I}}(E_{\beta,{Q}})$,
let $g(x_1,x_2)=f_1(x_1)f_2(x_2)$ for any $(x_1,x_2)\in E_{\alpha,Q}\times E_{\beta,{Q}}$ and set $$f_1\ast f_2=v^{-m_{\alpha,\beta}}\textrm{Ind}(g)$$
where $m_{\alpha,\beta}=\sum_{h\in H}\alpha_{s(h)}\beta_{t(h)}+\sum_{i\in I}\alpha_{i}\beta_{i}$. The algebra $(\mathcal{F}_{Q},\ast)$ is still denoted by $\mathcal{F}_{Q}$.

For any $\alpha\in\mathcal{P}$, let $\mathcal{O}_{\alpha}$ be the $G_{\alpha,I}$-orbit of $V_{\alpha}$ in $E_{\alpha,Q}$ and $\mathbf{1}_{\alpha}$ the constant function on $\mathcal{O}_{\alpha}$ with value $1$.
It is directed that $$\mathbf{1}_{\alpha}\ast \mathbf{1}_{\beta}(x)=v^{-m_{\alpha,\beta}}g_{\alpha\beta}^{\gamma}$$ for any $x\in\mathcal{O}_{\gamma}$.

\begin{theorem}[\cite{Lusztig_Canonical_bases_and_Hall_algebras,Lin_Xiao_Zhang_Representations_of_tame_quivers_and_affine_canonical_bases}]
There is an isomorphism of algebras $$\varphi:{H}^{\ast}_q(Q)\rightarrow\mathcal{F}_{{Q}}$$ defined by  sending $u_{\alpha}$ to $v^{\textrm{dim}V_{\alpha}-\textrm{dim}G_{\alpha}}\mathbf{1}_{\alpha}$.
\end{theorem}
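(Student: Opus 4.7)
The plan is to verify that $\varphi$ is both a $\mathbb{Q}(v)$-linear isomorphism and an algebra homomorphism, reducing the second statement to a direct check on the basis $\{u_{\alpha}\}_{\alpha\in\mathcal{P}}$ of $H^{\ast}_{q}(Q)$. For bijectivity I would observe that the $G_{\nu,I}$-orbits in $E_{\nu,Q}$ are in natural bijection with the isomorphism classes in $\textrm{rep}_{k}Q$ of dimension vector $\nu$, so every $G_{\nu,I}$-invariant constructible function on $E_{\nu,Q}$ is determined by its values on orbits, and consequently $\{\mathbf{1}_{\alpha}\}_{\alpha\in\mathcal{P}}$ is a $\mathbb{Q}(v)$-basis of $\mathcal{F}_{Q}$. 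Since the scalars $v^{\dim V_{\alpha}-\dim G_{\alpha}}$ are units in $\mathbb{Q}(v)$, the linearity and bijectivity of $\varphi$ are then immediate.

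To check that $\varphi$ respects multiplication, it suffices to verify $\varphi(u_{\alpha}\ast u_{\beta})=\varphi(u_{\alpha})\ast\varphi(u_{\beta})$ for all $\alpha,\beta\in\mathcal{P}$. By the definition of the twisted product,
$$\varphi(u_{\alpha}\ast u_{\beta})=v^{\langle\alpha,\beta\rangle}\sum_{\gamma\in\mathcal{P}}g^{\gamma}_{\alpha\beta}\,v^{\dim V_{\gamma}-\dim G_{\gamma}}\mathbf{1}_{\gamma},$$
while the identity $\mathbf{1}_{\alpha}\ast\mathbf{1}_{\beta}(x)=v^{-m_{\alpha,\beta}}g^{\gamma}_{\alpha\beta}$ for $x\in\mathcal{O}_{\gamma}$ recalled just before the theorem gives
$$\varphi(u_{\alpha})\ast\varphi(u_{\beta})=v^{\dim V_{\alpha}+\dim V_{\beta}-\dim G_{\alpha}-\dim G_{\beta}-m_{\alpha,\beta}}\sum_{\gamma\in\mathcal{P}}g^{\gamma}_{\alpha\beta}\mathbf{1}_{\gamma}.$$
Comparing coefficients of $\mathbf{1}_{\gamma}$ for each $\gamma$ with $g^{\gamma}_{\alpha\beta}\ne 0$ (so that $\gamma=\alpha+\beta$ as dimension vectors), the identity reduces to the numerical equality
$$\langle\alpha,\beta\rangle+m_{\alpha,\beta}=(\dim V_{\alpha}+\dim V_{\beta}-\dim V_{\gamma})-(\dim G_{\alpha}+\dim G_{\beta}-\dim G_{\gamma}),$$
and substituting $\dim V_{\alpha}=\sum_{i\in I}\alpha_{i}$, $\dim G_{\alpha}=\sum_{i\in I}\alpha_{i}^{2}$, together with the definitions of $\langle\alpha,\beta\rangle$ and $m_{\alpha,\beta}$, both sides collapse to $2\sum_{i\in I}\alpha_{i}\beta_{i}$.

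The main obstacle is really the preparatory identity $\mathbf{1}_{\alpha}\ast\mathbf{1}_{\beta}(x)=v^{-m_{\alpha,\beta}}g^{\gamma}_{\alpha\beta}$ invoked above. In a fully self-contained treatment one would unwind the correspondence diagram $p_{1},p_{2},p_{3}$ at a point $x\in\mathcal{O}_{\gamma}$: the push-forward $(p_{3})_{!}$ is shown to count exactly the $x$-stable graded subspaces $\mathbf{W}\subset\mathbf{V}$ with $\mathbf{W}\cong V_{\beta}$ and $\mathbf{V}/\mathbf{W}\cong V_{\alpha}$, which is the Hall number $g^{\gamma}_{\alpha\beta}$, while the normalization coming from the principal $G_{\alpha,I}\times G_{\beta,I}$-bundle $p_{2}$ gets absorbed into the twist factor $v^{-m_{\alpha,\beta}}$. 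Once this combinatorial identity is granted, the rest of the proof is pure bookkeeping of $v$-exponents.
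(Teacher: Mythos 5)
Your argument is correct, and it is exactly the computation the paper intends: the paper states this theorem without proof (citing Lusztig and Lin--Xiao--Zhang) but records immediately beforehand the key identity $\mathbf{1}_{\alpha}\ast\mathbf{1}_{\beta}(x)=v^{-m_{\alpha,\beta}}g^{\gamma}_{\alpha\beta}$ on $\mathcal{O}_{\gamma}$, which is precisely the input you isolate; your exponent check ($\langle\alpha,\beta\rangle+m_{\alpha,\beta}=2\sum_{i}\alpha_{i}\beta_{i}=-(\dim G_{\alpha}+\dim G_{\beta}-\dim G_{\gamma})$ since $\dim V_{\alpha}+\dim V_{\beta}=\dim V_{\gamma}$) is right, and the orbit--isomorphism-class bijection over $\mathbb{F}_q$ does give that $\{\mathbf{1}_{\alpha}\}$ is a basis. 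The only cosmetic quibble is that $(p_2^{\ast})^{-1}$ introduces no normalization for functions (unlike for shifted sheaves); the factor $v^{-m_{\alpha,\beta}}$ is simply inserted in the definition of $\ast$.
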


Denote by $$\Phi_{Q,Q'}:\mathcal{F}_{G_{\nu,I}}(E_{\nu,{Q}})\rightarrow\mathcal{F}_{G_{\nu,I}}(E_{\nu,{Q}'})$$
the Fourier transform, where $Q=(I,H)$ and $Q'=(I,H')$ are two quivers with the same underlying graph and different orientations (\cite{Sevenhant_Van_den_Bergh_On_the_double_of_the_Hall_algebra_of_a_quiver}).

On the relation between the Fourier transforms and the multiplication on $\mathcal{F}_{{Q}}$, we have the following proposition.
\begin{proposition}[\cite{Sevenhant_Van_den_Bergh_On_the_double_of_the_Hall_algebra_of_a_quiver}]\label{proposition_fourier_multi}
For two functions $f_1\in\mathcal{F}_{G_{\alpha,I}}(E_{\alpha,Q})$ and $f_2\in\mathcal{F}_{G_{\beta,I}}(E_{\beta,{Q}})$,
$$\Phi_{Q,Q'}(f_1\ast f_2)=\Phi_{Q,Q'}(f_1)\ast\Phi_{Q,Q'}(f_2),$$
where $Q=(I,H)$ and $Q'=(I,H')$ are two quivers with the same underlying graph and different orientations.
\end{proposition}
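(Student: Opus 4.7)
The plan is to reduce the proposition to the case of a single reversed arrow, and then to verify the identity through a geometric decomposition of the induction correspondence and a Plancherel-type computation. Any two orientations of a common underlying graph can be connected by a finite sequence of single-arrow reversals, and the Fourier transform attached to a composition of reversals equals the composition of the individual Fourier transforms, so it suffices to treat the case $H = H_0 \sqcup \{h\}$, $H' = H_0 \sqcup \{h^*\}$ for a single reversed arrow $h : i \to j$. In this setting $E_{\nu, Q} = E_{\nu, Q_0} \times \Hom_k(V_i, V_j)$ and $E_{\nu, Q'} = E_{\nu, Q_0} \times \Hom_k(V_j, V_i)$, the two new factors being naturally dual via the trace pairing, and $\Phi_{Q, Q'}$ acts only in this fiber direction while leaving the $E_{\nu, Q_0}$-direction untouched.

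Next I would decompose the correspondence $E_{\alpha, Q} \times E_{\beta, Q} \xleftarrow{p_1} E' \xrightarrow{p_2} E'' \xrightarrow{p_3} E_{\gamma, Q}$ along the reversed-arrow direction. Writing a representation as $(x_0, v)$ with $v \in \Hom_k(V_i, V_j)$, the condition that $\mathbf{W}$ be stable under $x$ splits into the condition that $\mathbf{W}$ be $x_0$-stable (independent of $v$) plus the linear condition $v(W_i) \subseteq W_j$. Hence $E''_Q$ is a vector subbundle of the trivial bundle with fibers $\Hom_k(V_i, V_j)$ over the common base $\bar E := \{(x_0, \mathbf{W}) : \mathbf{W} \text{ is } x_0\text{-stable}\}$, with fiber over $(x_0, \mathbf{W})$ the subspace $L_{\mathbf W} = \{v : v(W_i) \subseteq W_j\}$; the analogous statement holds for $E''_{Q'}$ over the same base, with fiber $L'_{\mathbf W} = \{\psi : \psi(W_j) \subseteq W_i\}$. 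The formula for $f_1 \ast f_2$ then becomes an integral over $\bar E$ in which the $v$-dependence is concentrated in a summation against the indicator function of $L_{\mathbf W}$, whereas the remaining $Q_0$-data (the $x_0$-parts and the framing isomorphisms $\rho', \rho''$) are shared between the pictures for $Q$ and $Q'$.

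Fiberwise over $\bar E$, the identity thus reduces to the statement that Fourier transform in the $\Hom_k(V_i, V_j)$ direction intertwines summation over $L_{\mathbf W}$ with the corresponding summation on the dual side; this is a Plancherel-type computation, and the conventional normalization of the Fourier transform by an overall factor of $v^{-\dim \Hom_k(V_i, V_j)}$ is chosen precisely so that the $v$-powers on the two sides match. The main obstacle I anticipate is the bookkeeping of $v$-powers: the normalization factor $v^{-m_{\alpha, \beta}}$ appearing in the definition of $\ast$ is computed with respect to the orientation and changes by $\alpha_i \beta_j - \alpha_j \beta_i$ when passing from $Q$ to $Q'$, and this shift must be shown to cancel exactly against the normalization produced by the Fourier transform together with the dimension difference between $L_{\mathbf W}$ and $L'_{\mathbf W}$. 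The cleanest conceptual realization of the comparison lifts the whole argument to the level of constructible $\bar{\bbq}_\ell$-sheaves and invokes the commutation of the Fourier--Deligne transform with proper pushforward and smooth pullback (base change, after Laumon), and then descends to functions via the trace of Frobenius; the function-theoretic identity falls out of its sheaf-theoretic counterpart with no further combinatorics.
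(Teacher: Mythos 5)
The paper does not prove this proposition at all: it is quoted verbatim from Sevenhant--Van den Bergh (the same compatibility appears in Lusztig's work on quivers and perverse sheaves), so there is no internal proof to compare against. Your proposal is essentially the standard argument from that literature, and its skeleton is sound: reduce to a single arrow reversal $h\colon i\to j$ using $\Phi_{\hat Q',\hat Q''}\Phi_{\hat Q,\hat Q'}=\Phi_{\hat Q,\hat Q''}$, observe that the base $\bar E=\{(x_0,\mathbf W): \mathbf W \text{ is } x_0\text{-stable}\}$ of the induction correspondence is common to both orientations with the two $E''$'s sitting over it as the ``subbundles'' with fibers $L_{\mathbf W}$ and $L'_{\mathbf W}$, and then do the fiberwise character sum. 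Your accounting of the exponent shift $m'_{\alpha,\beta}-m_{\alpha,\beta}=\alpha_j\beta_i-\alpha_i\beta_j$ against $\dim L_{\mathbf W}-\dim L'_{\mathbf W}=\alpha_i\beta_j-\alpha_j\beta_i$ (codimensions $\beta_i\alpha_j$ and $\beta_j\alpha_i$ respectively) is exactly the bookkeeping that makes the normalizations match.

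One place where your write-up is loose and would need repair in a full proof: it is not true that the Fourier transform carries ``summation over $L_{\mathbf W}$'' to ``summation over $L'_{\mathbf W}$'' in the naive sense, because the annihilator of $L_{\mathbf W}=\{v: v(W_i)\subseteq W_j\}$ under the trace pairing is $\{\psi: \psi(W_j)=0,\ \mathrm{Im}\,\psi\subseteq W_i\}$, which is strictly smaller than $L'_{\mathbf W}$. The correct fiberwise statement must keep the integrand $f_1(\bar v)f_2(v|_{\mathbf W})$ inside the sum: choosing a splitting $V_i=W_i\oplus U_i$, $V_j=W_j\oplus U_j$, one checks that for $\psi\in L'_{\mathbf W}$ the phase $\mathrm{tr}(\psi v)$ decouples as $\mathrm{tr}(\psi_{WW}v_{WW})+\mathrm{tr}(\psi_{UU}v_{UU})$ and the free off-diagonal block of $v$ contributes only a volume factor, while for $\psi\notin L'_{\mathbf W}$ the sum over that free block is a nontrivial character sum over a vector space and vanishes. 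That orthogonality computation, not Plancherel per se, is what produces both the support condition $\psi\in L'_{\mathbf W}$ and the factorization into $\Phi(f_1)$ and $\Phi(f_2)$. With that correction the argument closes; your alternative sheaf-theoretic route via Fourier--Deligne and proper base change is also legitimate and is the form in which the statement is usually proved before taking traces of Frobenius.
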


\subsection{Double Ringel-Hall algebras}

In this section, we shall recall the definition of  double Ringel-Hall algebra of $Q$ introduced by Xiao (\cite{xiao1997drinfeld}).

Let $\tilde{{H}}_q^+(Q)$ be the free $\mathbb{Q}(v)$-module with basis $\{K_\mu u^+_\alpha\,\,|\,\,\mu\in\mathbb{Z}I, \alpha\in\mathcal{P}\}$.
The multiplication is defined as:
\begin{enumerate}
  \item[(1)]$u^+_{\alpha}u^+_{\beta}=v^{\langle\alpha,\beta\rangle}\sum_{\lambda\in\mathcal{P}}g^{\lambda}_{\alpha\beta}u^+_{\lambda}$ for all $\alpha,\beta\in\mathcal{P}$,
  \item[(2)]$K_{\mu}u^+_\alpha=v^{(\mu,\alpha)}u^+_\alpha K_{\mu}$ for all $\alpha\in\mathcal{P}$ and $\mu\in\mathbb{Z}I$,
  \item[(3)]$K_{\nu}K_{\mu}=K_{\nu+\mu}$ for all $\nu,\mu\in \mathbb{Z}I$,
\end{enumerate}
and the unit is $1=u^+_0=K_0$.
The comultiplication $\Delta$ is defined as:
\begin{enumerate}
  \item[(1)]$\Delta(u^+_{\lambda})=\sum_{\alpha,\beta\in\mathcal{P}}v^{\langle\alpha,\beta\rangle}\frac{a_{\alpha}a_{\beta}}{a_{\lambda}}
  g^{\lambda}_{\alpha\beta}u^+_{\alpha}K_{\beta}\otimes u^+_{\beta}$ for all $\lambda\in\mathcal{P}$,
  \item[(2)]$\Delta(K_{\mu})=K_{\mu}\otimes K_{\mu}$ for all $\mu\in \mathbb{Z}I$,
\end{enumerate}
and the counit $\epsilon$ is defined as:
\begin{enumerate}
  \item[(1)]$\epsilon(u^+_{\lambda})=0$ for all $\lambda\neq0\in\mathcal{P}$,
  \item[(2)]$\epsilon(K_{\mu})=1$ for all $\mu\in \mathbb{Z}I$.
\end{enumerate}
The antipode $S$ is defined as:
\begin{enumerate}
  \item[(1)]$S(u^+_{\lambda})=\delta_{\lambda0}+\sum_{m\geq1}(-1)^m\sum v^{2\sum_{i<j}\langle\lambda_i,\lambda_j\rangle}
  \frac{a_{\lambda_1}\dots a_{\lambda_m}}{a_\lambda}g^{\lambda}_{\lambda_1,\dots,\lambda_m}g^{\pi}_{\lambda_1,\dots,\lambda_m}K_{-\lambda}u^+_{\pi}$ for all $\lambda\in\mathcal{P}$, where $\pi\in\mathcal{P}$, $\lambda_i\in\mathcal{P}-\{0\}$ for any $1\leq i\leq m$,
  \item[(2)]$S(K_{\mu})=K_{-\mu}$ for all $\mu\in \mathbb{Z}I$.
\end{enumerate}
With these operators, $\tilde{{H}}_q^+(Q)$ becomes a Hopf algebra.

The Hopf algebra $\tilde{{H}}_q^+(Q)$ is called the extended twisted Ringel-Hall algebra of $Q$.
Let ${H}_{q}^+(Q)$ be the subalgebra of $\tilde{{H}}_q^+(Q)$ with basis $\{u^+_\alpha\,\,|\,\,\alpha\in\mathcal{P}\}$.

Similarly, one can define the extended twisted Ringel-Hall algebra $\tilde{{H}}^-_q(Q)$ with an $\mathbb{Q}(v)$-basis $\{K_\mu u^-_\alpha\,\,|\,\,\mu\in\mathbb{Z}I, \alpha\in\mathcal{P}\}$. Let ${H}_{q}^-(Q)$ be the subalgebra of $\tilde{{H}}_q^-(Q)$ with basis $\{u^-_\alpha\,\,|\,\,\alpha\in\mathcal{P}\}$.

There is a bilinear form $\varphi:\tilde{{H}}_q^+(Q)\times\tilde{{H}}_q^-(Q)\rightarrow \mathbb{Q}(v)$ such that
$$\varphi(K_{\mu}u^+_\alpha,K_{\nu}u^-_\beta)=v^{-(\mu,\nu)-(\alpha,\nu)+(\mu,\beta)}\frac{|V_\alpha|}{a_{\alpha}}\delta_{\alpha\beta},$$
which is a skew-Hopf pairing. %Hence, $\tilde{\mathcal{H}}^+(\Lambda)\otimes_{\mathcal{A}_q}\tilde{\mathcal{H}}^-(\Lambda)$ admit a Hopf algebra structure.

Denote by  ${\tilde{D}}_q(Q)$ the Drinfeld double of $(\tilde{{H}}_q^+(Q),\tilde{{H}}_q^-(Q),\varphi)$. The ideal of ${\tilde{D}}_q(Q)$
generated by $\{K_{\mu}\otimes1-1\otimes K_{\mu}\,\,|\,\,\mu\in\mathbb{Z}I\}$ is a Hopf ideal. Denote by ${D}_q(Q)$ the quotient of ${\tilde{D}}_q(Q)$ module this Hopf ideal. Note that ${D}_q(Q)$ is a Hopf algebra, which is called the reduced Drinfeld double of Ringel-Hall algebra of $Q$ or double Ringel-Hall algebra of $Q$ for simplicity.

The double Ringel-Hall algebra admits the following triangular decomposition
$${D}_q(Q)={H}_{q}^-(Q)\otimes{T}\otimes{H}_{q}^+(Q),$$
where ${T}$ is the torus subalgebra generated by $\{K_{\mu}\,\,|\,\,\mu\in\mathbb{Z}I\}$.

Note that there are two isomorphisms of algebras ${^+}:{H}_q^{\ast}(Q)\rightarrow{H}_{q}^+(Q)$ mapping $u_{\lambda}$ to $u^+_{\lambda}$ and ${^-}:{H}_q^{\ast}(Q)\rightarrow{H}_{q}^-(Q)$ mapping $u_{\lambda}$ to $u^-_{\lambda}$ for all $\lambda\in\mathcal{P}$.

In ${D}_q(Q)$, Xiao proved the following formulas.

\begin{proposition}[\cite{xiao1997drinfeld}]\label{proposition_xiao-yang}
For any $i\in I$ and $\alpha\in\mathcal{P}$, we have
\begin{enumerate}
  \item[(1)]$u^-_{i}u^+_{\alpha}-u^+_{\alpha}u^-_{i}=
\frac{|V_i|}{a_{\alpha}}\sum_{\beta\in\mathcal{P}}a_{\beta}u^+_{\beta}
(v^{\langle{\beta,i}\rangle}g_{\beta i}^{\alpha}K_{i}-v^{-\langle{\beta,i}\rangle}g_{i\beta}^{\alpha}K_{-i})$,
  \item[(2)]$u^-_{\alpha}u^+_{i}-u^+_{i}u^-_{\alpha}=
\frac{|V_i|}{a_{\alpha}}\sum_{\beta\in\mathcal{P}}a_{\beta}u^-_{\beta}
(v^{-\langle{\beta,i}\rangle}g_{i\beta}^{\alpha}K_{i}-v^{\langle{\beta,i}\rangle}g_{\beta i}^{\alpha}K_{-i})$.
\end{enumerate}
\end{proposition}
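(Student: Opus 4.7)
Both identities are direct consequences of the Heisenberg-type cross relation
$$
\sum_{(x),(y)}\varphi(x_{(1)},y_{(2)})\,x_{(2)} y_{(1)} \;=\; \sum_{(x),(y)}\varphi(x_{(2)},y_{(1)})\,y_{(2)} x_{(1)}\qquad(\ast)
$$
that governs the Drinfeld double $\tilde{D}_q(Q)$, valid for all $x\in\tilde{H}^+_q(Q)$ and $y\in\tilde{H}^-_q(Q)$ (with $\Delta(x)=\sum x_{(1)}\otimes x_{(2)}$ and similarly for $y$). After passing to the quotient $D_q(Q)$, where $K_\mu\otimes 1$ is identified with $1\otimes K_\mu$, this becomes an explicit commutator formula. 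The plan is to apply $(\ast)$ in each of the two configurations and simplify.

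\medskip

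For part (1) I would specialize $(\ast)$ to $x=u^+_\alpha$ and $y=u^-_i$. Since $V_i$ is simple, the Hall numbers $g^i_{\beta\gamma}$ are supported on $(0,i)$ and $(i,0)$, so $\Delta(u^-_i)$ reduces to two terms, a piece of the form $1\otimes u^-_i$ and a piece of the form $u^-_i\otimes K_{-i}$ (in the $\tilde H^-$-convention analogous to the positive coproduct given in the paper). On the other side, $\Delta(u^+_\alpha)=\sum_{\beta,\gamma}v^{\langle\beta,\gamma\rangle}\frac{a_\beta a_\gamma}{a_\alpha}g^\alpha_{\beta\gamma}\, u^+_\beta K_\gamma\otimes u^+_\gamma$ is the full Xiao--Ringel expression. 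Substituting these into $(\ast)$ and using the skew pairing
$$
\varphi(K_\mu u^+_\sigma, K_\nu u^-_\tau)=v^{-(\mu,\nu)-(\sigma,\nu)+(\mu,\tau)}\,\tfrac{|V_\sigma|}{a_\sigma}\,\delta_{\sigma\tau},
$$
the Kronecker factor $\delta_{\sigma\tau}$ collapses each sum: pairing against $1\otimes u^-_i$ pins $\gamma=i$, while pairing against $u^-_i\otimes K_{-i}$ pins $\beta=i$. These two families of surviving terms match exactly the two summands indexed by $g^\alpha_{\beta i}$ and $g^\alpha_{i\beta}$ in the target formula.

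\medskip

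The main technical step is the bookkeeping of $v$-powers and scalar coefficients. The exponents $v^{\pm\langle\beta,i\rangle}$ on the right-hand side of (1) must be assembled from three sources: the $v^{\langle\beta,\gamma\rangle}$ in $\Delta(u^+_\alpha)$, the exponent $-(\mu,\nu)-(\sigma,\nu)+(\mu,\tau)$ in the pairing, and the commutation $K_\mu u^+_\sigma = v^{(\mu,\sigma)} u^+_\sigma K_\mu$ used to shuffle $K$-factors into canonical position. Likewise, the prefactor $|V_i|/a_\alpha$ together with the inner weight $a_\beta$ has to emerge from the combination of $a_\beta a_i/a_\alpha$ coming out of the coproduct with $|V_i|/a_i$ from the pairing. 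I expect this tracking of scalars to be the only substantive calculation; conceptually, nothing beyond $(\ast)$ is used.

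\medskip

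For part (2) I would run the mirror argument, applying $(\ast)$ to $x=u^+_i$ and $y=u^-_\alpha$ and exploiting the simple expansion $\Delta(u^+_i)=u^+_i\otimes 1+K_i\otimes u^+_i$ against the full coproduct on the $H^-$-side. The bookkeeping is completely parallel to part (1) and explains the interchange of $g^\alpha_{\beta i}\leftrightarrow g^\alpha_{i\beta}$ together with the sign/exponent flip. Alternatively one may derive (2) directly from (1) using the anti-involution of $\tilde{D}_q(Q)$ that exchanges $u^+_\lambda\leftrightarrow u^-_\lambda$ and sends $K_\mu\mapsto K_{-\mu}$, provided one first checks its compatibility with $\varphi$.
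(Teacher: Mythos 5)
The paper does not prove this proposition; it is quoted from Xiao's work, where it is established exactly as you propose, by evaluating the Drinfeld-double cross relation on $x=u^{+}_{\alpha},y=u^{-}_{i}$ (resp.\ $x=u^{+}_{i},y=u^{-}_{\alpha}$), using that simplicity of $V_i$ collapses one coproduct to two terms and that the factor $\delta_{\sigma\tau}$ in $\varphi$ collapses the other sum. Your outline, including the scalar bookkeeping $\frac{|V_i|}{a_i}\cdot\frac{a_\beta a_i}{a_\alpha}=\frac{|V_i|a_\beta}{a_\alpha}$, is correct and matches the cited argument.
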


Denoted by $\hat{D}_q(Q)$ the subalgebra of ${D}_q(Q)$ generated by $u^+_{i}(i\in I)$, $u^-_{\alpha}(\alpha\in\mathcal{P})$ and $K_{\mu}(\mu\in \mathbb{Z}I)$.

The definition of ${D}_q(Q)$ and Proposition \ref{proposition_xiao-yang} imply the following proposition.

\begin{proposition}\label{proposition_definition-relation}
The algebra $\hat{D}_q(Q)$ is the associative algebra generated by $u^+_{i}(i\in I)$, $u^-_{\alpha}(\alpha\in\mathcal{P})$ and $K_{\mu}(\mu\in \mathbb{Z}I)$ subject to the following relations:
\begin{enumerate}
  \item[(1)]$K_0=u^+_0=u^-_0=\mathbf{1},\,\,K_{\mu}K_{\nu}=K_{\mu+\nu}$ for any $\mu,\nu\in \mathbb{Z}I$,
  \item[(2)]$K_{\mu}u^+_{i}=v^{(\mu,i)}u^+_{i} K_{\mu}$ for any $i\in I$ and $\mu\in \mathbb{Z}I$,
  \item[(3)]$K_{\mu}u^-_{\beta}=v^{-(\beta,\mu)}u^-_{\beta} K_{\mu}$ for any $\beta\in\mathcal{P}$ and $\mu\in \mathbb{Z}I$,
  \item[(4)]$\sum_{m=0}^{1-(i,j)}(-1)^m(u^+_{i})^{(m)} u^+_{j} (u^+_{i})^{(1-(i,j)-m)}=0$ for any $i\neq j$,
  \item[(5)]$u^-_{\alpha}u^-_{\beta}
  =v^{\langle{\alpha,\beta}\rangle}\sum_{\lambda\in\mathcal{P}}g_{\alpha\beta}^{\lambda}u^-_{\lambda}$ for any $\alpha,\beta\in\mathcal{P}$,
  \item[(6)]$u^-_{\alpha}u^+_{i}-u^+_{i}u^-_{\alpha}=
  \frac{|V_i|}{a_{\alpha}}\sum_{\beta\in\mathcal{P}}a_{\beta}u^-_{\beta}
  (v^{-\langle{\beta,i}\rangle}g_{i\beta}^{\alpha}K_{i}-v^{\langle{\beta,i}\rangle}g_{\beta i}^{\alpha}K_{-i})$ for any $i\in I$ and $\alpha\in \mathcal{P}$,
\end{enumerate}
where $(u^+_{i})^{(m)}=(u^+_{i})^{m}/[m]_{v}!$ and $[m]_{v}!=\prod_{h=1}^{m}\frac{v^h-v^{-h}}{v-v^{-1}}$.
\end{proposition}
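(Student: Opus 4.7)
The plan is to verify that the listed relations hold in $\hat{D}_q(Q)$, and then argue they are defining by reducing to known presentations of the three pieces of a triangular decomposition. Concretely, I would set $A$ to be the abstract associative $\mathbb{Q}(v)$-algebra with the given generators subject to (1)--(6), produce a canonical surjection $\pi \colon A \to \hat{D}_q(Q)$, and compare spanning sets of $A$ with a triangular basis of $\hat{D}_q(Q)$.

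For the first step, I would check each relation in turn. Relation (1) is the multiplication in the torus $T \subseteq D_q(Q)$ together with the convention $u^\pm_0 = 1$. Relations (2) and (3) follow from the rules $K_\mu u^\pm_\alpha = v^{\pm(\mu,\alpha)} u^\pm_\alpha K_\mu$ already built into the extended Ringel-Hall algebras $\tilde{H}^\pm_q(Q)$; these survive the passage to the reduced Drinfeld double, where the two copies of $K_\mu$ are identified. Relation (5) is just the definition of the twisted multiplication in $H^-_q(Q)$, and relation (6) is exactly Proposition~\ref{proposition_xiao-yang}(2). Finally, relation (4) is the quantum Serre relation on the positive side: the subalgebra generated by $\{u^+_i\}_{i\in I}$ is the composition subalgebra $C^+_q(Q)$, and by Ringel's theorem $C^+_q(Q) \cong \mathbf{U}^-|_{v=\sqrt{q}}$, so the Serre relations hold there.

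Next, I would show these relations are sufficient. Using (2), (3), and (6) one can rewrite any monomial in the generators of $A$ as a $\mathbb{Q}(v)$-linear combination of normal-form monomials $u^-_\alpha\, K_\mu\, P$, where $P$ is a monomial in the $u^+_i$; relation (6) moves all $u^-$'s past $u^+$'s to the left, relations (2)--(3) move the $K_\mu$'s to the middle, and relations (1) and (5) collapse the torus and negative factors. Thus $A$ is spanned by elements $u^-_\alpha \cdot K_\mu \cdot P$ where $\alpha \in \mathcal{P}$, $\mu \in \mathbb{Z}I$, and $P$ runs over a PBW-type basis of (the abstract algebra defined by) relation (4). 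Under $\pi$ these map to the triangular basis of $\hat{D}_q(Q) = H^-_q(Q) \otimes T \otimes C^+_q(Q)$ inherited from the triangular decomposition of $D_q(Q)$, and since that image is already known to be a basis, $\pi$ must be an isomorphism.

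The main obstacle is the injectivity of $\pi$, i.e.\ showing that the normal forms in $A$ are linearly independent and not merely spanning. This rests on three external inputs: (i) relation (5) presents $H^-_q(Q)$, which is immediate since it is the defining relation of the twisted Ringel-Hall algebra on the basis $\{u^-_\alpha\}$; (ii) relation (4) alone presents $C^+_q(Q)$, which is Ringel's theorem; and (iii) no additional identifications mix the three tensor factors, which follows from the triangular decomposition of $D_q(Q)$. Once these are in hand, counting dimensions in each graded piece shows that the spanning set of $A$ matches the triangular basis of $\hat{D}_q(Q)$, forcing $\pi$ to be bijective and completing the proof.
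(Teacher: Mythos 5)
Your proposal is correct and follows the route the paper leaves implicit: the paper dismisses this proposition in a single sentence (``the definition of $D_q(Q)$ and Proposition~\ref{proposition_xiao-yang} imply\dots''), and your plan --- verify (1)--(6) from the construction of the reduced Drinfeld double and Xiao's commutator formula, then prove sufficiency by reducing words to normal forms $u^-_\alpha K_\mu P$ and matching them against the triangular decomposition --- is exactly the standard argument that fills in that omission. The external inputs you isolate (relation (4) presents the positive composition subalgebra by Ringel--Green, relation (5) trivially presents $H^-_q(Q)$, and the triangular decomposition of $D_q(Q)$ keeps the three factors independent) are all available in the paper, so the argument is complete.
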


\subsection{Quantum groups and composition subalgebras}
In this section, we shall recall the definition of quantum groups (\cite{Lusztig_Introduction_to_quantum_groups}).

Let $a_{ij}=(i,j)$ and $A=(a_{ij})_{i,j\in I}$ be the symmetric generalized Cartan matrix associated to the quiver $Q$. Let $v$ be an indeterminate.

The quantum group $\mathbf{U}$ associated to the quiver $Q$ is an associative algebra over $\mathbb{Q}(v)$ with unit element $1$, generated by the elements $E_i$, $F_i(i\in I)$ and $K_{\mu}(\mu\in \mathbb{Z}I)$ subject to the following relations
\begin{enumerate}
  \item[(1)]$K_{0}=1$ and $K_{\mu}K_{\nu}=K_{\mu+\nu}$ for all $\mu,\nu\in \mathbb{Z}I$,
  \item[(2)]$K_{\mu}E_{i}=v^{(\mu,i)}E_iK_{\mu}$ for all $i\in I$, $\mu\in \mathbb{Z}I$,
  \item[(3)]$K_{\mu}F_{i}=v^{-(\mu,i)}F_iK_{\mu}$ for all $i\in I$, $\mu\in \mathbb{Z}I$,
  \item[(4)]$\sum_{k=0}^{1-a_{ij}}(-1)^{k}E_i^{(k)}E_jE_i^{(1-a_{ij}-k)}=0$ for all $i\neq j\in I$,
  \item[(5)]$\sum_{k=0}^{1-a_{ij}}(-1)^{k}F_i^{(k)}F_jF_i^{(1-a_{ij}-k)}=0$ for all $i\neq j\in I$,
  \item[(6)]$E_iF_j-F_jE_i=\delta_{ij}\frac{K_{i}-K_{-i}}{v-v^{-1}}$ for all $i,j\in I$,
\end{enumerate}
where $E_i^{(n)}=E_i^n/[n]_{v}!$ and $F_i^{(n)}=F_i^n/[n]_{v}!$.

The quantum group $\mathbf{U}$ has the following triangular decomposition
\begin{displaymath}
\mathbf{U}\cong {\mathbf{U}^-}\otimes{\mathbf{U}^{0}}\otimes{\mathbf{U}^{+}},
\end{displaymath}
where $\mathbf{U}^-$, $\mathbf{U}^+$ and $\mathbf{U}^{0}$ are the subalgebras $\mathbf{U}$ generated by $F_i$, $E_i$ and $K_{\mu}$
for all $i\in I$ and $\mu\in \mathbb{Z}I$ respectively.

Denoted by ${D}_c(Q)$ the subalgebra of ${D}_q(Q)$ generated by $u^\pm_{i}(i\in I)$  and $K_{\mu}(\mu\in \mathbb{Z}I)$. The algebra ${D}_c(Q)$ is called the composition subalgebra of ${D}_q(Q)$. Note the ${D}_c(Q)$ is also the subalgebra of ${\hat{D}}_q(Q)$ generated by $u^\pm_{i}(i\in I)$  and $K_{\mu}(\mu\in \mathbb{Z}I)$.

On the relation between the composition subalgebra ${D}_c(Q)$ and quantum group $\mathbf{U}$ of the quiver $Q$, we have the following theorem.
\begin{theorem}[\cite{Ringel_Hall_algebras_and_quantum_groups,green1995hall,xiao1997drinfeld}]
There is an isomorphism of algebras $$\Phi_q:{D}_c(Q)\rightarrow\mathbf{U}|_{v=\sqrt{q}}$$ defined by sending $u^+_{i}$ to $E_i$, $u^-_{i}$ to $-vF_i$ and $K_{\mu}$ to $K_{\mu}$.
\end{theorem}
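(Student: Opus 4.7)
The plan is to show that $\Phi_q$ respects the defining relations of $\mathbf{U}|_{v=\sqrt q}$ and then deduce bijectivity from the triangular decompositions on both sides. Well-definedness is the bulk of the work. The torus relations (1)--(3) of $\mathbf{U}$ follow immediately from items (1)--(3) of Proposition \ref{proposition_definition-relation} applied to the generators $K_\mu$ and $u^\pm_i$, since the scalar $-v$ appearing in $u^-_i \mapsto -vF_i$ is central and passes through cleanly. The positive Serre relation (4) of $\mathbf{U}$ coincides with item (4) of Proposition \ref{proposition_definition-relation} under $E_i = u^+_i$, using $a_{ij}=(i,j)$.

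The negative Serre relation (5) of $\mathbf{U}$ is not among the defining relations listed in Proposition \ref{proposition_definition-relation}. I would derive it from the multiplication formula (5) of that proposition, either by a direct Hall-number computation mirroring Ringel's proof on the positive side, or more cleanly by invoking the algebra antiautomorphism exchanging $u^+_i \leftrightarrow u^-_i$ and $K_\mu \leftrightarrow K_{-\mu}$, which converts item (4) into the sought negative Serre relation. The sign $(-v)^{1-a_{ij}}$ introduced by $u^-_i \mapsto -v F_i$ factors globally out of the alternating sum and does not affect vanishing.

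The key verification is relation (6) of $\mathbf{U}$, the commutator $[E_i, F_j]$. For this I would specialize item (6) of Proposition \ref{proposition_definition-relation} to $\alpha = j \in I$. Because $V_j$ is the simple module at vertex $j$, only the subrepresentations $0$ and $V_j$ contribute, and the Hall numbers $g^j_{i\beta}$ and $g^j_{\beta i}$ vanish unless $i=j$ and $\beta=0$. Using $a_j=q-1$ and $|V_j|=q$, the right-hand side collapses to $\delta_{ij}\tfrac{q}{q-1}(K_i-K_{-i})$. Writing $E_i F_j - F_j E_i = -v^{-1}(u^+_i u^-_j - u^-_j u^+_i)$ and using $v^2=q$ to simplify $\tfrac{v^{-1}q}{q-1} = \tfrac{1}{v-v^{-1}}$ recovers exactly relation (6). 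I expect this bookkeeping, together with the correct calibration of the sign in $u^-_i \mapsto -vF_i$, to be the main obstacle.

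Bijectivity then follows by standard means. Surjectivity is immediate since $E_i$, $F_i$, $K_\mu$ all lie in the image. For injectivity I would invoke the triangular decomposition of $D_c(Q)$ induced from $D_q(Q) \cong H^-_q(Q) \otimes T \otimes H^+_q(Q)$, parallel to $\mathbf{U} \cong \mathbf{U}^- \otimes \mathbf{U}^0 \otimes \mathbf{U}^+$, and show that $\Phi_q$ restricts to isomorphisms on each factor. The torus piece is tautological; the positive piece $D_c^+(Q) \to \mathbf{U}^+|_{v=\sqrt q}$ is Ringel's original theorem, obtained by comparing the PBW basis of $\mathbf{U}^+$ with the basis of the composition algebra given by ordered products of $u^+_i$'s indexed by the positive roots; the negative piece follows by symmetry. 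Compatibility with both triangular decompositions then yields global injectivity of $\Phi_q$.
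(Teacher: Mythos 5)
The paper does not actually prove this theorem --- it is quoted from Ringel, Green and Xiao --- so there is no internal proof to compare against; your proposal reconstructs the standard argument from that literature, and the computation you identify as the crux is correct: specializing relation (6) of Proposition \ref{proposition_definition-relation} to $\alpha=j$ simple indeed collapses the sum to $\delta_{ij}\frac{q}{q-1}(K_i-K_{-i})$ (since $g^{j}_{i\beta}=g^{j}_{\beta i}=\delta_{ij}\delta_{\beta 0}$, $|V_i|=q$, $a_i=q-1$), and the normalization $u^-_i\mapsto -vF_i$ together with $v^2=q$ produces exactly the factor $\frac{1}{v-v^{-1}}$ in relation (6) of $\mathbf{U}$. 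Two points deserve tightening. First, since $D_c(Q)$ is defined as a subalgebra of $D_q(Q)$ rather than by a presentation, checking the relations of $\mathbf{U}$ among $u^+_i$, $-v^{-1}u^-_i$, $K_\mu$ actually constructs the \emph{inverse} homomorphism $\mathbf{U}|_{v=\sqrt q}\to D_c(Q)$, which is surjective by construction; $\Phi_q$ is then obtained by inverting it once injectivity is known, so the logic should be stated in that direction. Second, establishing injectivity on the positive part by ``comparing PBW bases indexed by positive roots'' is Ringel's argument and is only available for Dynkin quivers; for a general quiver $Q$ (which this paper allows) the isomorphism $\mathbf{U}^{\pm}|_{v=\sqrt q}\cong$ composition algebra is Green's theorem, proved via the comultiplication and the nondegeneracy of the Hopf pairing rather than via PBW bases --- which is precisely why Green appears among the citations. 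Finally, the negative Serre relation is obtained most cheaply not by a fresh Hall-number computation but from the algebra isomorphism $H^{\ast}_q(Q)\cong H^-_q(Q)$, $u_\lambda\mapsto u^-_\lambda$, recorded in the paper, which transports the positive-side Serre relation verbatim. With these adjustments your argument is complete and is the one the cited sources give.
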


\subsection{Highest weight modules}

Similarly to the representation theory of a double Ringel-Hall algebra introduced in \cite{deng2002double}, we can consider the representation theory of its subalgebra ${\hat{D}}_q(Q)$.

Given $\omega=\sum_{i\in I}\omega_ii\in\mathbb{N}I$, denote by $M(\omega)$ the Verma module of $\hat{D}_q(Q)$, which is the quotient of $\hat{D}_q(Q)$ by the left ideal generated by ${u^+_{i}}$, $K_i-v^{\omega_i}$ for all $i\in I$. Denote by $\eta_{\omega}\in M(\omega)$ the coset of $K_0$ in $\hat{D}_q(Q)$.

Denote by $L(\omega)$ the quotient of $M(\omega)$ by the left ideal generated by ${(u^-_{i})}^{\omega_i+1}$ for all $i\in I$. The coset of $\eta_{\omega}$ is still denoted by $\eta_{\omega}$ in $L(\omega)$.

The modules $M(\omega)$ and $L(\omega)$ are highest weight modules of $\hat{D}_q(Q)$.

\section{Constructions of highest weight modules via functions}

\subsection{The spaces $\mathcal{F}_{\hat{Q}}$}

For a quiver $Q=(I,H,s,t)$, let $\hat{I}=I\cup\{\hat{i}\,\,|\,\,i\in I\}$ and $\hat{H}=H\cup\{h_i\,\,|\,\,i\in I\}$, where $h_i$ is an arrow that connects $i$ and $\hat{i}$.
The quiver $\hat{Q}=(\hat{I},\hat{H},s,t)$ is called an enlarged quiver of $Q$.

For any $\nu\in\mathbb{N}\hat{I}$, fix an $\hat{I}$-graded vector space $\mathbf{V}=\bigoplus_{i\in\hat{I}}V_i$ of dimension vector $\nu$. Consider the variety $$E_{\nu,\hat{Q}}=\bigoplus_{h\in \hat{H}}\Hom_k(V_{s(h)},V_{t(h)}).$$
The group $G_{\nu,I}$ acts on $E_{\nu,\hat{Q}}$ by $g.x=gxg^{-1}$.
Let $\mathcal{F}_{G_{\nu,I}}(E_{\nu,\hat{Q}})$ be the space of $G_{\nu,I}$-invariant constructible functions on $E_{\nu,\hat{Q}}$.

Denote by $$\Phi_{\hat{Q},\hat{Q}'}:\mathcal{F}_{G_{\nu,I}}(E_{\nu,\hat{Q}})\rightarrow\mathcal{F}_{G_{\nu,I}}(E_{\nu,\hat{Q}'})$$
the Fourier transform, where $\hat{Q}=(\hat{I},\hat{H})$ and $\hat{Q}'=(\hat{I},\hat{H}')$ are two enlarged quivers with the same underlying graph and different orientations (\cite{Sevenhant_Van_den_Bergh_On_the_double_of_the_Hall_algebra_of_a_quiver}).

For any $i\in I$, choose an enlarged quiver $^i\hat{Q}$ such that $i$ is a source. Let ${^i{E}}_{\nu,{^i\hat{Q}}}$ be the subvariety of ${E}_{\nu,{^i\hat{Q}}}$ consisting of all $x=(x_h)_{h\in\hat{H}}$ such that $$\bigoplus_{h\in\hat{H},s(h)=i}x_h:V_i\rightarrow\bigoplus_{h\in\hat{H},s(h)=i}V_{t(h)}$$ is injective.
Denote by ${\mathcal{N}}_{\nu,{^i\hat{Q}},i}$ the subspace of $\mathcal{F}_{G_{\nu,I}}(E_{\nu,{^i\hat{Q}}})$ consisting of the functions $f$ such that $\textrm{supp}{f}\cap{^i{E}}_{\nu,{^i\hat{Q}}}=\emptyset$. Denote by $j_{\nu,{^i\hat{Q}}}:{^i{E}}_{\nu,{^i\hat{Q}}}\rightarrow{E}_{\nu,{^i\hat{Q}}}$ the natural embedding.

For a general enlarged quiver $\hat{Q}$, denote by ${{\mathcal{N}}}_{\nu,\hat{Q},i}$ the subspace of $\mathcal{F}_{G_{\nu,I}}(E_{\nu,\hat{Q}})$ consisting of the functions $f$ such that $\Phi_{{^i\hat{Q}},\hat{Q}}(f)\in{\mathcal{N}}_{\nu,{^i\hat{Q}},i}$.
Since $\Phi_{\hat{Q}',\hat{Q}''}\Phi_{\hat{Q},\hat{Q}'}=\Phi_{\hat{Q},\hat{Q}''}$ for enlarged quivers $\hat{Q},\hat{Q}'$ and $\hat{Q}''$ with the same underlying graph,  the subspace ${{\mathcal{N}}}_{\nu,\hat{Q},i}$ is independent of the choice of $^i\hat{Q}$.

Let $\mathcal{N}_{\nu,\hat{Q}}$ be the subspace of $\mathcal{F}_{G_{\nu,I}}(E_{\nu,\hat{Q}})$ generated by
${\mathcal{N}}_{\nu,\hat{Q},i}$ for various $i\in I$ and
$$\mathcal{F}_{\nu,\hat{Q}}=\mathcal{F}_{G_{\nu,I}}(E_{\nu,\hat{Q}})/\mathcal{N}_{\nu,\hat{Q}}.$$ Let $$\mathcal{F}_{\hat{Q}}=\bigoplus_{\nu\in\mathbb{N}\hat{I}}\mathcal{F}_{\nu,\hat{Q}}.$$

Fix two enlarged quivers $\hat{Q}=(\hat{I},\hat{H})$ and $\hat{Q}'=(\hat{I},\hat{H}')$ with the same underlying graph and different orientations. Since $\Phi_{\hat{Q},\hat{Q}'}({{\mathcal{N}}}_{\nu,\hat{Q},i})={{\mathcal{N}}}_{\nu,\hat{Q}',i}$ for any $i\in{I}$, it holds that $\Phi_{\hat{Q},\hat{Q}'}({{\mathcal{N}}}_{\nu,\hat{Q}})={{\mathcal{N}}}_{\nu,\hat{Q}'}$.
Hence the Fourier transform $\Phi_{\hat{Q},\hat{Q}'}:\mathcal{F}_{G_{\nu,I}}(E_{\nu,\hat{Q}})\rightarrow\mathcal{F}_{G_{\nu,I}}(E_{\nu,\hat{Q}'})$ induces
the following map
$$\Phi_{\hat{Q},\hat{Q}'}:\mathcal{F}_{\nu,\hat{Q}}\rightarrow\mathcal{F}_{\nu,\hat{Q}'}.$$

\begin{remark}
In \cite{zheng2008categorification}, Zheng studied the algebraic stack $[E_{\nu,\hat{Q}}/G_{\nu,I}]$ and the bounded derived
category $\mathcal{D}([E_{\nu,\hat{Q}}/G_{\nu,I}])$ of constructible $\bar{\mathbb{Q}}_l$-sheaves on it.
Then, Zheng studied a subcategory $\mathcal{N}_{\nu}$ of the the derived
category $\mathcal{D}([E_{\nu,\hat{Q}}/G_{\nu,I}])$ and the localized category $\mathfrak{D}_{\nu}=\mathcal{D}([E_{\nu,\hat{Q}}/G_{\nu,I}])/\mathcal{N}_{\nu}$. Here we study $\mathcal{F}_{G_{\nu,I}}(E_{\nu,\hat{Q}})$, the subspace $\mathcal{N}_{\nu,\hat{Q}}$ of $\mathcal{F}_{G_{\nu,I}}(E_{\nu,\hat{Q}})$ and the quotient space $\mathcal{F}_{\nu,\hat{Q}}=\mathcal{F}_{G_{\nu,I}}(E_{\nu,\hat{Q}})/\mathcal{N}_{\nu,\hat{Q}}$, which are the functional versions of $\mathcal{D}([E_{\nu,\hat{Q}}/G_{\nu,I}])$, $\mathcal{N}_{\nu}$ and $\mathfrak{D}_{\nu}$ respectively.
\end{remark}

\subsection{The maps $\mathcal{K}_{\hat{Q},i}$}

For any $i\in I$ and $\nu\in\mathbb{N}\hat{I}$, define $$\mathcal{K}_{\hat{Q},i}=v^{\bar{\nu}_i-\nu_i}\mathrm{Id}:\mathcal{F}_{G_{\nu,I}}(E_{\nu,\hat{Q}})\rightarrow\mathcal{F}_{G_{\nu,I}}(E_{\nu,\hat{Q}}),$$
where $\bar{\nu}_i=\sum_{h\in\hat{H},s(h)=i}\nu_{t(h)}+\sum_{h\in\hat{H},t(h)=i}\nu_{s(h)}-\nu_i$.

On the relation between the maps $\mathcal{K}_{\hat{Q},i}$ and the Fourier transforms, Zheng proved the following proposition.
\begin{proposition}[\cite{zheng2008categorification}]\label{proposition_zheng_K_1}
For any two enlarged quivers $\hat{Q}$ and $\hat{Q}'$ with the same underlying graph, it holds that
$$\Phi_{\hat{Q},\hat{Q}'}\mathcal{K}_{\hat{Q},i}=\mathcal{K}_{\hat{Q}',i}\Phi_{\hat{Q},\hat{Q}'}.$$
\end{proposition}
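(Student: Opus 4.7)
The plan is to observe that both operators in question reduce to multiplication by the same scalar, so that the stated commutation relation becomes trivial linearity of the Fourier transform.

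First I would unpack the definition of $\bar{\nu}_i$. By definition,
\[
\bar{\nu}_i = \sum_{h\in\hat{H},\,s(h)=i}\nu_{t(h)} + \sum_{h\in\hat{H},\,t(h)=i}\nu_{s(h)} - \nu_i,
\]
and the key point is that this quantity depends only on the underlying graph of $\hat{Q}$ and on $\nu$, not on the orientation. Indeed, the sum of $\nu_{t(h)}$ over arrows starting at $i$ plus the sum of $\nu_{s(h)}$ over arrows ending at $i$ is precisely the sum of $\nu_j$ over all arrows (of either orientation) incident to $i$, weighted by the dimension at the other endpoint. Reversing an arrow $h$ with $s(h)=i$ and $t(h)=j$ turns it into an arrow $h'$ with $t(h')=i$ and $s(h')=j$, and the contribution $\nu_j$ is moved from the first sum to the second. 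Hence for two enlarged quivers $\hat{Q}$ and $\hat{Q}'$ with the same underlying graph, the scalars $\bar{\nu}_i$ computed in $\hat{Q}$ and in $\hat{Q}'$ coincide.

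Consequently both $\mathcal{K}_{\hat{Q},i}$ and $\mathcal{K}_{\hat{Q}',i}$ are multiplication by the same scalar $v^{\bar{\nu}_i - \nu_i}$ on their respective spaces. Since $\Phi_{\hat{Q},\hat{Q}'}$ is a $\mathbb{Q}(v)$-linear map (in fact $\mathbb{C}$-linear), it commutes with multiplication by any scalar, and therefore
\[
\Phi_{\hat{Q},\hat{Q}'}\mathcal{K}_{\hat{Q},i}(f) = v^{\bar{\nu}_i-\nu_i}\Phi_{\hat{Q},\hat{Q}'}(f) = \mathcal{K}_{\hat{Q}',i}\Phi_{\hat{Q},\hat{Q}'}(f)
\]
for every $f\in\mathcal{F}_{G_{\nu,I}}(E_{\nu,\hat{Q}})$.

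There is really no obstacle here; the only thing worth being careful about is the orientation-invariance of $\bar{\nu}_i$, which is the one combinatorial check the proof rests on. Since the Fourier transform acts between fixed $G_{\nu,I}$-equivariant function spaces over the same dimension vector $\nu$, the scalars on the two sides necessarily match, and no further analysis of the structure of $\Phi_{\hat{Q},\hat{Q}'}$ is needed.
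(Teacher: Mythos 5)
Your proof is correct and is essentially the only argument needed: in this functional setting $\mathcal{K}_{\hat{Q},i}$ is literally the scalar $v^{\bar{\nu}_i-\nu_i}$ times the identity, the scalar is orientation-independent for exactly the reason you give, and $\Phi_{\hat{Q},\hat{Q}'}$ preserves the dimension vector $\nu$ and is linear. The paper itself omits the proof (deferring to Zheng's categorical version), but your argument matches the intended functional-version proof.
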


In \cite{zheng2008categorification}, Zheng also proved the following propositions.
\begin{proposition}[\cite{zheng2008categorification}]\label{proposition_zheng_K_2}
For any $i\in I$ and $\nu\in\mathbb{N}\hat{I}$, it holds that
$$\mathcal{K}_{\hat{Q},i}(\mathcal{N}_{\nu,\hat{Q}})\subset\mathcal{N}_{\nu,\hat{Q}}.$$
\end{proposition}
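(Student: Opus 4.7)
The plan is immediate from the definition of $\mathcal{K}_{\hat{Q},i}$. On the component $\mathcal{F}_{G_{\nu,I}}(E_{\nu,\hat{Q}})$ this map is literally multiplication by the nonzero scalar $v^{\bar{\nu}_i-\nu_i}$, which depends only on $\nu$ and $i$ and not on the function being acted on. Any scalar multiple of the identity preserves every linear subspace automatically, so the proposition will be a formal consequence with no real computation required; there is no substantial obstacle to speak of.

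Concretely, I would first check the inclusion $\mathcal{K}_{\hat{Q},i}({\mathcal{N}}_{\nu,\hat{Q},j})\subseteq{\mathcal{N}}_{\nu,\hat{Q},j}$ for an arbitrary $j\in I$. Given $f\in{\mathcal{N}}_{\nu,\hat{Q},j}$, the function $\mathcal{K}_{\hat{Q},i}(f)=v^{\bar{\nu}_i-\nu_i}f$ is a scalar multiple of $f$; since $\Phi_{\hat{Q},{^j\hat{Q}}}$ is $\mathbb{Q}(v)$-linear, $\Phi_{\hat{Q},{^j\hat{Q}}}(\mathcal{K}_{\hat{Q},i}(f))$ is the same scalar multiple of $\Phi_{\hat{Q},{^j\hat{Q}}}(f)$ and hence has identical support. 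The defining vanishing on ${^j}{E}_{\nu,{^j\hat{Q}}}$ is therefore preserved, and $\mathcal{K}_{\hat{Q},i}(f)\in{\mathcal{N}}_{\nu,\hat{Q},j}$.

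Since $\mathcal{N}_{\nu,\hat{Q}}$ is by definition the linear span of the subspaces ${\mathcal{N}}_{\nu,\hat{Q},j}$ for $j\in I$, and each summand is invariant under the scalar operator $\mathcal{K}_{\hat{Q},i}$, their sum is invariant as well, yielding $\mathcal{K}_{\hat{Q},i}(\mathcal{N}_{\nu,\hat{Q}})\subseteq\mathcal{N}_{\nu,\hat{Q}}$. The only thing needing verification is that $v^{\bar{\nu}_i-\nu_i}$ is nonzero, which is immediate. One could alternatively invoke Proposition \ref{proposition_zheng_K_1} to reduce to a favorable orientation in which the defining condition of ${\mathcal{N}}_{\nu,\hat{Q},j}$ is visible directly, but even this detour is superfluous given that $\mathcal{K}_{\hat{Q},i}$ is simply a scalar on each $\nu$-graded summand.
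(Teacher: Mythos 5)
Your proof is correct: as defined in this paper, $\mathcal{K}_{\hat{Q},i}$ acts on the graded piece $\mathcal{F}_{G_{\nu,I}}(E_{\nu,\hat{Q}})$ as the scalar $v^{\bar{\nu}_i-\nu_i}$, so it preserves every linear subspace, in particular each ${\mathcal{N}}_{\nu,\hat{Q},j}$ and hence their span $\mathcal{N}_{\nu,\hat{Q}}$. The paper gives no argument of its own, deferring to Zheng's categorical version (where the analogous functor statement is less trivial); in this functional setting your scalar-operator observation is exactly the intended, and essentially the only, proof.
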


Hence, the maps $\mathcal{K}_{\hat{Q},i}:\mathcal{F}_{G_{\nu,I}}(E_{\nu,\hat{Q}})\rightarrow\mathcal{F}_{G_{\nu,I}}(E_{\nu,\hat{Q}})$ induce the following maps
$$\mathcal{K}_{\hat{Q},i}:\mathcal{F}_{\nu,\hat{Q}}\rightarrow\mathcal{F}_{\nu,\hat{Q}}.$$

When $i$ is a source of $\hat{Q}$,
define
$${^i\mathcal{K}}_{\hat{Q},i}=v^{\bar{\nu}_i-\nu_i}\mathrm{Id}:\mathcal{F}_{G_{\nu,I}}({^iE}_{\nu,\hat{Q}})\rightarrow\mathcal{F}_{G_{\nu,I}}({^iE}_{\nu,\hat{Q}}),$$
for any $\nu\in\mathbb{N}\hat{I}$.

By the definition of ${^i\mathcal{K}}_{\hat{Q},i}$ and $\mathcal{K}_{\hat{Q},i}$, we have the following commutative diagram
\begin{equation}\label{cd_1}
\xymatrix{\mathcal{F}_{G_{\nu,I}}({E}_{\nu,\hat{Q}})\ar[d]^-{j^\ast_{\nu,{\hat{Q}}}}\ar[r]^-{\mathcal{K}_{\hat{Q},i}}&
\mathcal{F}_{G_{\nu,I}}({E}_{\nu,\hat{Q}})\ar[d]^-{j^\ast_{\nu,{\hat{Q}}}}\\
\mathcal{F}_{G_{\nu,I}}({^iE}_{\nu,\hat{Q}})\ar[r]^-{{^i\mathcal{K}}_{\hat{Q},i}}&
\mathcal{F}_{G_{\nu,I}}({^iE}_{\nu,\hat{Q}}).
}
\end{equation}

\begin{remark}
The maps $\mathcal{K}_{\hat{Q},i}$ are the functional versions of the functors $\mathcal{K}_{\Omega,i}$ in \cite{zheng2008categorification}. Proposition \ref{proposition_zheng_K_1} and \ref{proposition_zheng_K_2} are the functional versions of Proposition 3.3 and 3.4 in \cite{zheng2008categorification} and the proofs are  on the same.
\end{remark}

\subsection{The maps $\mathcal{E}^{+}_{\hat{Q},ni}$}

For any $\nu,\nu'\in\mathbb{N}\hat{I}$ such that $\nu'-\nu\in\mathbb{N}I$, fix $\hat{I}$-graded vector spaces $\mathbf{V}=\bigoplus_{i\in\hat{I}}V_i$ and $\mathbf{V}'=\bigoplus_{i\in\hat{I}}V'_i$ of dimension vector $\nu$ and $\nu'$ respectively.

Let $$F_{\nu\nu'}=\{y\in\bigoplus_{j\in{\hat{I}}}\Hom(V_j,V'_j)\,\,|\,\,y_j\textrm{ is injective and } y_{\hat{j}}=\textrm{Id} \textrm{ for any $j\in I$}\}$$
and $G_{\nu\nu',I}=G_{\nu,I}\times G_{\nu',I}$. The group $G_{\nu\nu',I}$ acts on $F_{\nu\nu'}$
by $(g,g').y=g'yg^{-1}$ for any $y\in F_{\nu\nu'}$ and $(g,g')\in G_{\nu\nu',I}$.

Let
$$Z_{\hat{Q}}=\{(x,x',y)\in E_{\nu,\hat{Q}}\times E_{\nu',\hat{Q}}\times F_{\nu\nu'}\,\,|\,\,x'_hy_{s(h)}=y_{t(h)}x_h\textrm{ for any $h\in\hat{H}$}\}.$$
Consider the following two maps
$p:Z_{\hat{Q}}\rightarrow E_{\nu,\hat{Q}}$
defined as $p(x,x',y)=x$ and
$p':Z_{\hat{Q}}\rightarrow E_{\nu',\hat{Q}}$
defined as $p'(x,x',y)=x'$.

When $i$ is a source of $\hat{Q}$,
let $${^i{Z}}_{\hat{Q}}=\{(x,x',y)\in Z_{\hat{Q}}\,\,|\,\, x'\in{^i{E}}_{\nu',\hat{Q}}\}.$$
Denote by ${^ip}$ and ${^ip'}$ be the restrictions of $p$ and $p'$ to ${^iZ}_{\hat{Q}}$ respectively.

For any $\nu\in\mathbb{N}\hat{I}$ and $i\in I$, let $\nu'=\nu+ni$ and define
$$\mathcal{E}^{+}_{\hat{Q},ni}=v^{-n\nu_i}|G_{\nu'}|^{-1}({^i{p}})_!({^i{p}')^\ast}:\mathcal{F}_{G_{\nu',I}}(E_{\nu',\hat{Q}})\rightarrow\mathcal{F}_{G_{\nu,I}}(E_{\nu,\hat{Q}}).$$

On the relation between the maps $\mathcal{E}^{+}_{\hat{Q},ni}$ and the Fourier transforms, Zheng proved the following proposition.

\begin{proposition}[\cite{zheng2008categorification}]\label{proposition_fourier_e}
For any two enlarged quivers $\hat{Q}$ and $\hat{Q}'$ both having $i$ as sources, it holds that
$$\Phi_{\hat{Q},\hat{Q}'}\mathcal{E}^{+}_{\hat{Q},ni}=\mathcal{E}^{+}_{\hat{Q}',ni}\Phi_{\hat{Q},\hat{Q}'}.
$$
\end{proposition}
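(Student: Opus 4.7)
My plan is to reduce to a single arrow reversal and then verify the intertwining by a direct fiberwise Fourier computation. Using the composition property $\Phi_{\hat{Q},\hat{Q}''}=\Phi_{\hat{Q}',\hat{Q}''}\Phi_{\hat{Q},\hat{Q}'}$ noted in the excerpt, I can connect $\hat{Q}$ to $\hat{Q}'$ by a chain of enlarged quivers, each obtained from the previous by reversing a single arrow. Because $i$ is a source in both endpoints, no arrow incident to $i$ is ever reversed; every intermediate quiver keeps $i$ as a source, so all intermediate $\mathcal{E}^{+}_{\cdot,ni}$ are defined and the reduction is legitimate. It therefore suffices to treat the case where $\hat{Q}$ and $\hat{Q}'$ differ in the orientation of a single arrow $h_0$ which is not incident to $i$; say $h_0:a\to b$ in $\hat{Q}$ and $h_0:b\to a$ in $\hat{Q}'$ with $a,b\neq i$.

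For this single reversal, decompose $E_{\nu,\hat{Q}}=E_{\nu,\hat{Q}_0}\oplus\Hom(V_a,V_b)$ and $E_{\nu',\hat{Q}}=E_{\nu',\hat{Q}_0}\oplus\Hom(V'_a,V'_b)$, where $\hat{Q}_0$ denotes the enlarged quiver obtained by deleting $h_0$; make the analogous decomposition for $\hat{Q}'$. The incidence variety ${^iZ}_{\hat{Q}}$ fibers over the product of the unchanged coordinates, and over a point $(\bar{x},\bar{x}',y)$ of the base its fiber is the linear subspace
$$L_y=\{(x_{h_0},x'_{h_0})\in\Hom(V_a,V_b)\oplus\Hom(V'_a,V'_b)\mid x'_{h_0}y_a=y_bx_{h_0}\};$$
the analogous fiber for ${^iZ}_{\hat{Q}'}$ is $L'_y=\{(\tilde{x},\tilde{x}')\mid \tilde{x}'y_b=y_a\tilde{x}\}$. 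The key step is the linear-algebraic identity that, under the perfect pairing $\mathrm{tr}(x\tilde{x})+\mathrm{tr}(x'\tilde{x}')$ on the $h_0$-coordinates, the annihilator of $L_y$ equals $L'_y$ up to a global sign absorbed into the Fourier convention. A short direct calculation verifies this: parametrize $L_y$ by $x_{h_0}$ together with an element $w\in\Hom(V'_a/y_a(V_a),V'_b)$ encoding the freedom in $x'_{h_0}$, impose vanishing of the pairing in both variables, and use injectivity of $y_a$ and $y_b$ to force $\tilde{x}'=y_au$ and $\tilde{x}=-uy_b$ for some $u\in\Hom(V'_b,V_a)$, which cuts out exactly $L'_y$. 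Combined with the general principle that the Fourier transform sends the characteristic function of a linear subspace to a scalar multiple of the characteristic function of its annihilator, and with Fubini over the unchanged coordinates, this yields the intertwining of $({^ip})_!({^ip'})^{*}$ with $\Phi_{\hat{Q},\hat{Q}'}$.

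The main obstacle is the bookkeeping of the scalar factors. The fiberwise Fourier transform of $\mathbf{1}_{L_y}$ produces a scalar depending on $\dim L_y$ and on $\dim V_a,\dim V_b,\dim V'_a,\dim V'_b$; one must check that this scalar is absorbed exactly by the normalization built into $\Phi_{\hat{Q},\hat{Q}'}$ on the two sides, together with the prefactor $v^{-n\nu_i}|G_{\nu'}|^{-1}$ defining $\mathcal{E}^{+}_{\hat{Q},ni}$. Since all relevant dimensions are invariant under the reversal of an arrow not incident to $i$, and since $v^{-n\nu_i}|G_{\nu'}|^{-1}$ depends only on the dimension vector, the scalars do match; but this is the most delicate step of the argument, and is precisely the reason the hypothesis that $i$ be a source in both $\hat{Q}$ and $\hat{Q}'$ cannot be weakened.
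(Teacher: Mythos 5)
The paper itself offers no proof of this proposition: it is quoted from Zheng with the remark that the argument is the functional analogue of Proposition 3.3 of \cite{zheng2008categorification}. Your proposal supplies exactly that standard argument, and it is correct. The reduction to a single arrow reversal is legitimate for the reason you give (no arrow incident to $i$ ever changes orientation, so every intermediate quiver keeps $i$ as a source), and the fiberwise computation is the right mechanism. On the step you flag as most delicate, the numbers do work out, and in fact more simply than your general parametrization suggests: since $a,b\neq i$ and $\nu'=\nu+ni$, the map $y_a\colon V_a\to V'_a$ is an injection between spaces of equal dimension, hence an isomorphism, so $\Hom(V'_a/y_a(V_a),V'_b)=0$ and $L_y$ is just the graph $\{(x,\,y_bxy_a^{-1})\}$ of dimension $d_ad_b$, exactly half of $\dim\bigl(\Hom(V_a,V_b)\oplus\Hom(V'_a,V'_b)\bigr)=2d_ad_b$; this is precisely the condition under which the Fourier transform carries $\mathbf{1}_{L_y}$ to $\mathbf{1}_{L_y^{\perp}}$ with scalar $1$, and the annihilator of that graph under the pairing $\mathrm{tr}(x\tilde{x})-\mathrm{tr}(x'\tilde{x}')$ is exactly $L'_y$ with no residual sign. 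The only point left implicit in your write-up, worth one line, is that the open condition defining ${}^i{E}$ and ${}^i{Z}$ involves only arrows incident to $i$, hence only the unchanged coordinates, so the Fubini step over the base is unaffected by the transform in the $h_0$-coordinate.
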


In \cite{zheng2008categorification}, Zheng also proved the following proposition.

\begin{proposition}[\cite{zheng2008categorification}]\label{proposition_zheng_e_1}
For any enlarged quiver $\hat{Q}$ such that $i$ is a source, it holds that
$$\mathcal{E}^{+}_{\hat{Q},ni}(\mathcal{N}_{\nu+ni,\hat{Q}})\subset\mathcal{N}_{\nu,\hat{Q}}.$$
\end{proposition}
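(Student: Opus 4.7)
The plan is to exploit the fact that $\mathcal{N}_{\nu+ni,\hat{Q}}$ is the sum of the pieces $\mathcal{N}_{\nu+ni,\hat{Q},j}$ for $j\in I$, and to verify the inclusion on each summand separately. For the summand with $j=i$, the check is immediate: since $i$ is already a source of $\hat{Q}$, the subspace $\mathcal{N}_{\nu+ni,\hat{Q},i}$ is by definition the space of $G_{\nu+ni,I}$-invariant functions whose support avoids ${^iE}_{\nu+ni,\hat{Q}}$. But the second projection ${^ip'}$ is, by construction, valued in ${^iE}_{\nu+ni,\hat{Q}}$, so $({^ip'})^{\ast}f\equiv 0$ on ${^iZ}_{\hat{Q}}$ for any such $f$, and the push-forward $\mathcal{E}^{+}_{\hat{Q},ni}(f)$ vanishes identically.

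For $j\neq i$, the aim is to establish the stronger statement $\mathcal{E}^{+}_{\hat{Q},ni}(\mathcal{N}_{\nu+ni,\hat{Q},j})\subset\mathcal{N}_{\nu,\hat{Q},j}$, by transporting the $j$-source condition through the Fourier transform. When $i$ and $j$ are not adjacent in the underlying graph of $Q$, one can pick an auxiliary enlarged quiver $\hat{Q}^{\ast}$ at which both $i$ and $j$ are sources; Proposition \ref{proposition_fourier_e} then yields the identity $\Phi_{\hat{Q},\hat{Q}^{\ast}}\mathcal{E}^{+}_{\hat{Q},ni}=\mathcal{E}^{+}_{\hat{Q}^{\ast},ni}\Phi_{\hat{Q},\hat{Q}^{\ast}}$, reducing the claim to the case where $j$ is itself a source. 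With $j$ a source and $i\neq j$, a pointwise argument finishes things off: for any $(x,x',y)\in{^iZ}_{\hat{Q}^{\ast}}$, the identity $y_{j}=\mathrm{Id}$ combined with the injectivity of $y_{i}$ forces the outgoing map from $V_{j}$ attached to $x'$ to be the composition of the outgoing map from $V_{j}$ attached to $x$ with an $I$-graded injection. Consequently, $x\in{^jE}_{\nu,\hat{Q}^{\ast}}$ forces $x'\in{^jE}_{\nu+ni,\hat{Q}^{\ast}}$ on every compatible $(x',y)$, so $f(x')=0$ on the whole fibre and $\mathcal{E}^{+}_{\hat{Q}^{\ast},ni}(f)(x)=0$.

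The main obstacle is the remaining sub-case in which $i$ and $j$ are adjacent in $Q$: no enlarged quiver carries both $i$ and $j$ as sources simultaneously, so the direct Fourier reduction above breaks. My plan is to factor the Fourier transform as $\Phi_{\hat{Q},{^j\hat{Q}}}=\Phi_{\hat{Q}',{^j\hat{Q}}}\circ\Phi_{\hat{Q},\hat{Q}'}$, where $\hat{Q}'$ is chosen to still have $i$ as a source, but with every arrow incident to $j$ other than the single $i$--$j$ edge reoriented to point out of $j$. The first factor commutes with $\mathcal{E}^{+}_{?,ni}$ by Proposition \ref{proposition_fourier_e}, and one is reduced to understanding the single Fourier transform along the $i$--$j$ edge. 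An explicit change-of-variables computation for that transform, combined with the $y_{j}=\mathrm{Id}$ and $y_{i}$-injectivity structure already exploited in the non-adjacent case, should recover the support statement and conclude $\mathcal{E}^{+}_{\hat{Q},ni}(f)\in\mathcal{N}_{\nu,\hat{Q},j}$.
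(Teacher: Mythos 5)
The paper offers no written proof of this proposition; it simply refers to Proposition~3.4 of Zheng's paper and asserts that the functional argument ``is the same,'' so your proposal has to stand on its own. Two of your three cases do stand. The case $j=i$ is correct: since $i$ is a source of $\hat{Q}$, the space $\mathcal{N}_{\nu+ni,\hat{Q},i}$ is exactly the support condition along ${^iE}_{\nu+ni,\hat{Q}}$, and ${^ip'}$ lands in ${^iE}_{\nu+ni,\hat{Q}}$ by the very definition of ${^iZ}_{\hat{Q}}$, so the pullback vanishes. The non-adjacent case is also correct, modulo a small imprecision: for $j\neq i$ it is $y_{\hat{j}}=\mathrm{Id}$ and $y_j$ is an isomorphism (injective between spaces of equal dimension), not $y_j=\mathrm{Id}$; the relation $x'_h y_{s(h)}=y_{t(h)}x_h$ then shows that injectivity of $\bigoplus_{s(h)=j}x_h$ forces injectivity of $\bigoplus_{s(h)=j}x'_h$, which is what you need.

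The genuine gap is the adjacent case, and it is not a peripheral sub-case: when $i$ and $j$ are joined by an edge of $Q$, no orientation makes both vertices sources, so the support conditions defining $\mathcal{N}_{\cdot,\hat{Q},i}$ and $\mathcal{N}_{\cdot,\hat{Q},j}$ can never be realized geometrically in the same model, and this is precisely why the proposition is nontrivial. Your reduction to a single Fourier transform along the $i$--$j$ edge is a reasonable first move, but the ``explicit change-of-variables computation'' that is supposed to finish the argument is not supplied, and it is not a routine verification: one must show that the one-edge Fourier transform carries $\mathcal{E}^{+}_{\hat{Q}',ni}$ of a function supported on the non-injective-at-$j$ locus back into the non-injective-at-$j$ locus for the reversed orientation, which is essentially the compatibility of the Fourier transform with the restriction-type correspondence across the reoriented edge. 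In Lusztig's and Zheng's treatments this step is handled not by a direct computation but by an orientation-independent characterization of $\mathcal{N}_{\nu,\hat{Q},j}$ (the ``key lemma'': functions in $\mathcal{N}_{\nu,\hat{Q},j}$ are spanned by functions induced from a nonzero multiple of the simple at $j$, i.e.\ by $g\ast\mathbf{1}_{mj}$ with $m\geq 1$, a description preserved by all Fourier transforms via Proposition~\ref{proposition_fourier_multi}), combined with a commutation of $\mathcal{E}^{+}_{\hat{Q},ni}$ with that induction for $j\neq i$. Without either that characterization or the actual Fourier computation, the adjacent case --- hence the proposition --- is not proved. Note also that you cannot appeal to Relation~(6) of Theorem~\ref{MT0} to commute $\mathcal{E}^{+}_{\hat{Q},i}$ past $\mathcal{E}^{-}_{\hat{Q},j}$ here, since the paper derives that relation on $\mathcal{F}_{\nu,\hat{Q}}$ from the present proposition; that route would be circular.
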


Hence, the maps $\mathcal{E}^{+}_{\hat{Q},ni}:\mathcal{F}_{G_{\nu+ni,I}}(E_{\nu+ni,\hat{Q}})\rightarrow\mathcal{F}_{G_{\nu,I}}(E_{\nu,\hat{Q}})$ induce the following maps $$\mathcal{E}^{+}_{\hat{Q},ni}:\mathcal{F}_{\nu+ni,\hat{Q}}\rightarrow\mathcal{F}_{\nu,\hat{Q}}.$$

For a general enlarged quiver $\hat{Q}$, choose a new enlarged quiver ${^i\hat{Q}}$ with the same underlying graph and having $i$ as source. Consider the following maps
$$\mathcal{E}^{+}_{\hat{Q},ni}=\Phi_{{^i\hat{Q}},\hat{Q}}\mathcal{E}^{+}_{{^i\hat{Q}},ni}\Phi_{\hat{Q},{^i\hat{Q}}}:\mathcal{F}_{\nu+ni,\hat{Q}}\rightarrow\mathcal{F}_{\nu,\hat{Q}}.$$
Proposition \ref{proposition_fourier_e} implies that the definition of $\mathcal{E}^{+}_{\hat{Q},ni}$ is independent of the choice of ${^i\hat{Q}}$ for any $i\in I$.

When $i$ is a source of $\hat{Q}$, Im${^ip}\subset{^iE}_{\nu,\hat{Q}}$ and Im${^ip'}\subset{^iE}_{\nu',\hat{Q}}$. Hence, we can
define
$${^i\mathcal{E}}^{+}_{\hat{Q},ni}=v^{-n\nu_i}|G_{\nu'}|^{-1}({^i{p}})_!({^i{p}')^\ast}:\mathcal{F}_{G_{\nu',I}}({^iE}_{\nu',\hat{Q}})\rightarrow\mathcal{F}_{G_{\nu,I}}({^iE}_{\nu,\hat{Q}}),$$
for any $\nu\in\mathbb{N}\hat{I}$ and $\nu'=\nu+ni$.

On the relation between ${^i\mathcal{E}}^{+}_{\hat{Q},ni}$ and ${\mathcal{E}}^{+}_{\hat{Q},ni}$, we have the following commutative diagram (\cite{zheng2008categorification})
\begin{equation}\label{cd_2}
\xymatrix{\mathcal{F}_{G_{\nu',I}}({E}_{\nu',\hat{Q}})\ar[d]^-{j^\ast_{\nu',{\hat{Q}}}}\ar[r]^-{{\mathcal{E}}^{+}_{\hat{Q},ni}}&
\mathcal{F}_{G_{\nu,I}}({E}_{\nu,\hat{Q}})\\
\mathcal{F}_{G_{\nu',I}}({^iE}_{\nu',\hat{Q}})\ar[r]^-{{^i\mathcal{E}}^{+}_{\hat{Q},ni}}&
\mathcal{F}_{G_{\nu,I}}({^iE}_{\nu,\hat{Q}})\ar[u]_-{(j_{\nu,{\hat{Q}}})_{!}}.
}
\end{equation}

\begin{remark}
The maps ${\mathcal{E}}^{+}_{\hat{Q},ni}$ are the functional versions of the functors $\mathcal{E}^{(n)}_{\Omega,i}$ in \cite{zheng2008categorification}. Proposition \ref{proposition_fourier_e} and \ref{proposition_zheng_e_1} are the functional versions of Proposition 3.3 and 3.4 in \cite{zheng2008categorification} and the proofs are on the same.
\end{remark}

\subsection{The maps $\mathcal{E}^{-}_{\hat{Q},\alpha}$}

For any $\alpha\in\mathcal{P}$ and $\nu\in\mathbb{N}\hat{I}$, let $\nu'=\nu+\alpha$.
Fix $\hat{I}$-graded vector spaces $\mathbf{V}=\bigoplus_{i\in\hat{I}}V_i$ and $\mathbf{V}'=\bigoplus_{i\in\hat{I}}V'_i$ of dimension vector $\nu$ and $\nu'$ respectively.

Let
$$Z_{\hat{Q},\alpha}=\{(x,x',y)\in Z_{\hat{Q}}\,\,|\,\,[V_{x'}/\textrm{Im}y]=\alpha\},$$
where $V_{x'}=(\mathbf{V}',x')$.
The restrictions of $p$ and $p'$ are still denoted by $p$ and $p'$.
Define
$$\mathcal{E}^{-}_{\hat{Q},\alpha}=
(-v)^{\textrm{dim}V_{\alpha}}v^{\langle{\alpha,\nu}\rangle+\textrm{dim}V_{\alpha}}|G_{\nu}|^{-1}
p'_!p^\ast:\mathcal{F}_{G_{\nu,I}}(E_{\nu,\hat{Q}})\rightarrow\mathcal{F}_{G_{\nu',I}}(E_{\nu',\hat{Q}}).$$

\begin{remark}
When $\alpha=i$, the map $\mathcal{E}^{-}_{\hat{Q},\alpha}$ is the functional version of the functor $\mathcal{F}_{\Omega,i}$ in \cite{zheng2008categorification}.
\end{remark}

Similarly to the definition of the multiplication $\ast$ on $\mathcal{F}_{Q}$ in Section \ref{subsection_function1}, we can give an alternative description of $\mathcal{E}^{-}_{\hat{Q},\alpha}$.

Fix an $\hat{I}$-graded vector space $\mathbf{V}_{\alpha}$ of dimension vector $\alpha$.
Consider the following correspondence
$$
\xymatrix{E_{\alpha,Q}\times E_{\nu,\hat{Q}}&E'\ar[r]^{p_2}\ar[l]_-{p_1}&E''\ar[r]^-{p_3}&E_{\nu',\hat{Q}}}.
$$
Here
\begin{enumerate}
  \item[(1)]$E''$ is the variety of all pairs $(x,\mathbf{W})$, where $x\in E_{\nu',\hat{Q}}$ and $\mathbf{W}$ is a $x$-stable $\hat{I}$-graded vector subspace of $\mathbf{V}'$ with dimension vector $\nu$;
  \item[(2)]$E'$ is the variety of all quadruples $(x, \mathbf{W}, \rho_1, \rho_2)$ where $(x, \mathbf{W})\in E''$ and $\rho_1: \mathbf{V}/\mathbf{W}\cong \mathbf{V}_\alpha,$ $\rho_2: \mathbf{W}\cong \mathbf{V}$ are linear isomorphisms;
  \item[(3)]$p_2$ and $p_3$ are natural projections;
  \item[(4)] $p_1(x, \mathbf{W}, \rho_1, \rho_2)=(x',x'')$ such that
       $$x'_h(\rho_1)_{s(h)}=(\rho_1)_{t(h)}x_h\,\textrm{ and }\,
       x''_h(\rho_2)_{s(h)}=(\rho_2)_{t(h)}x_h$$
       for any $h\in \hat{H}.$
\end{enumerate}

The group $G_{\nu',I}$ acts on $E''$ by $g.(x, \mathbf{W})=(gxg^{-1}, g\mathbf{W})$ for any $g\in G_{\nu',I}$.
The groups $G_{\alpha,I}\times G_{\nu,I}$ and $G_{\nu',I}$ act on $E'$ by $(g_1, g_2).(x, \mathbf{W}, \rho_1, \rho_2)=(x, \mathbf{W}, g_1\rho_1, g_2\rho_2)$ and $g.(x, \mathbf{W}, \rho_1, \rho_2)=(gxg^{-1}, g\mathbf{W}, \rho_1g^{-1}, \rho_2g^{-1})$ for any $(g_1, g_2)\in G_{\alpha,I}\times G_{\nu,I}$ and $g\in G_{\nu',I}$. The map $p_1$ is $G_{\alpha,I}\times G_{\nu,I}\times G_{\nu',I}$-equivariant ($G_{\nu',I}$ acts on $E_{\alpha,Q}\times E_{\nu,\hat{Q}}$ trivially) and  $p_2$ is a principal $G_{\alpha,I}\times G_{\nu,I}$-bundle.

Consider the following map $$\textrm{Ind}: \mathcal{F}_{G_{\alpha,I}\times G_{\nu,I}}(E_{\alpha,Q}\times E_{\nu,\hat{Q}}) \rightarrow\mathcal{F}_{G_{\nu',I}}(E_{\nu',\hat{Q}})$$ as the composition of the following maps
$$
\xymatrix{\mathcal{F}_{G_{\alpha,I}\times G_{\nu,I}}(E_{\alpha,Q}\times E_{\nu,\hat{Q}})\ar[r]^-{p^*_1}&\mathcal{F}_{G_{\alpha,I}\times G_{\nu,I}\times G_{\nu',I}}({E}')\ar[r]^-{(p_2^\ast)^{-1}}&\mathcal{F}_{G_{\nu',I}}({E}'')\ar[r]^-{(p_3)_{!}}&\mathcal{F}_{G_{\nu',I}}(E_{\nu',\hat{Q}})}.
$$

For two functions $f_1\in\mathcal{F}_{G_{\alpha,I}}(E_{\alpha,Q})$ and $f_2\in\mathcal{F}_{G_{\nu,I}}(E_{\nu,\hat{Q}})$,
let $g(x_1,x_2)=f_1(x_1)f_2(x_2)$ for any $(x_1,x_2)\in E_{\alpha,Q}\times E_{\nu,\hat{Q}}$. Then $g\in\mathcal{F}_{G_{\alpha,I}\times G_{\nu,I}}(E_{\alpha,Q}\times E_{\nu,\hat{Q}})$.
Set $$f_1\ast f_2=v^{-m_{\alpha,\nu}}\textrm{Ind}(g).$$

\begin{proposition}\label{proposition_relation_multi}
For any $\alpha\in\mathcal{P}$ and $f\in\mathcal{F}_{G_{\nu,I}}(E_{\nu,\hat{Q}})$, we have $$\mathcal{E}^{-}_{\hat{Q},\alpha}(f)=(-v)^{\textrm{dim}V_{\alpha}}v^{\langle{\alpha,\nu}\rangle+\textrm{dim}V_{\alpha}+m_{\alpha,\nu}}\mathbf{1}_{\alpha}\ast f.$$
\end{proposition}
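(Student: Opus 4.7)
The plan is to reduce the statement to the single identity
\[
|G_\nu|^{-1}\,p'_!\,p^\ast f\;=\;\textrm{Ind}(\mathbf{1}_\alpha\boxtimes f),
\]
where $\mathbf{1}_\alpha\boxtimes f$ denotes the external product on $E_{\alpha,Q}\times E_{\nu,\hat Q}$. Multiplying both sides by $(-v)^{\textrm{dim}V_\alpha}v^{\langle\alpha,\nu\rangle+\textrm{dim}V_\alpha}$ and absorbing the normalization $v^{-m_{\alpha,\nu}}$ from the definition $\mathbf{1}_\alpha\ast f=v^{-m_{\alpha,\nu}}\textrm{Ind}(\mathbf{1}_\alpha\boxtimes f)$ produces exactly the prefactor asserted in the proposition.

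The key geometric observation is that $Z_{\hat Q,\alpha}$ fits naturally between the two outer terms of the induction correspondence. Sending $(x,x',y)\in Z_{\hat Q,\alpha}$ to $(x',\mathbf W,\rho_2):=(x',\textrm{Im}(y),y^{-1}|_{\textrm{Im}(y)})$ identifies $Z_{\hat Q,\alpha}$ with the variety of such triples for which $(x',\mathbf W)\in E''_\alpha:=\{(x',\mathbf W)\in E''\mid [V_{x'}/\mathbf W]=\alpha\}$; the relation $x'y=yx$ forces $x=\rho_2(x'|_{\mathbf W})\rho_2^{-1}$. Under this bijection, forgetting $\rho_2$ is a principal $G_{\nu,I}$-bundle $\pi_2:Z_{\hat Q,\alpha}\to E''_\alpha$, and further adjoining a linear isomorphism $\rho_1:\mathbf V'/\mathbf W\xrightarrow{\sim}\mathbf V_\alpha$ is a principal $G_{\alpha,I}$-bundle $\pi_1:E'_\alpha\to Z_{\hat Q,\alpha}$, where $E'_\alpha:=p_2^{-1}(E''_\alpha)$. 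Consequently $p_2=\pi_2\circ\pi_1$ on $E'_\alpha$, $p'=p_3\circ\pi_2$, and $p\circ\pi_1$ coincides with the second component of $p_1|_{E'_\alpha}$ while the first component takes values in $\mathcal O_\alpha\subset E_{\alpha,Q}$.

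With this picture, both sides unfold to the same sum. The pullback $p_1^\ast(\mathbf{1}_\alpha\boxtimes f)$ vanishes off $E'_\alpha$ because $\mathbf{1}_\alpha$ is supported on $\mathcal O_\alpha$, and on $E'_\alpha$ it equals $\pi_1^\ast\,p^\ast f$; the $G_{\nu,I}$-invariance of $f$ makes $p^\ast f$ invariant under the $\rho_2$-rescaling action, so $(p_2^\ast)^{-1}$ returns the descent of $p^\ast f$ to $E''_\alpha$ (extended by zero), and $(p_3)_!$ yields
\[
\textrm{Ind}(\mathbf{1}_\alpha\boxtimes f)(x')\;=\;\sum_{\mathbf W\subset V_{x'},\,[V_{x'}/\mathbf W]=\alpha}f([x'|_{\mathbf W}]).
\]
On the other hand $p'_!\,p^\ast f(x')$ sums $f(x)$ over $(x,y)$ with $(x,x',y)\in Z_{\hat Q,\alpha}$; for each admissible $\mathbf W$ there are exactly $|G_{\nu,I}|$ choices of $y$ (the graded isomorphisms $\mathbf V\xrightarrow{\sim}\mathbf W$ with $y_{\hat j}=\mathrm{Id}$), all producing $G_{\nu,I}$-conjugate values of $x$ and hence the same $f(x)$. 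Dividing by $|G_\nu|=|G_{\nu,I}|$ yields the advertised identity.

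The main technical care is in the normalization: one must remember that $G_{\nu,I}=\prod_{i\in I}GL(V_i)$ indexes only the $I$-components of $\hat I$, so the constraint $y_{\hat j}=\mathrm{Id}$ guarantees precisely $|G_{\nu,I}|$ preimages per admissible $\mathbf W$, not $|\prod_{j\in\hat I}GL(V_j)|$. Once this normalization is pinned down, the combination of the scalar prefactors $(-v)^{\textrm{dim}V_\alpha}v^{\langle\alpha,\nu\rangle+\textrm{dim}V_\alpha}$ with $v^{-m_{\alpha,\nu}}$ is immediate.
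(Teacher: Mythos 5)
Your proof is correct and follows essentially the same route as the paper's: both boil down to the count that over each admissible subspace $\mathbf W\subset\mathbf V'$ the fiber of $p'$ contains exactly $|G_{\nu,I}|$ triples $(x,x',y)$, all giving the same value of $f$, so that $|G_\nu|^{-1}p'_!p^\ast$ agrees with $\textrm{Ind}(\mathbf{1}_\alpha\boxtimes -)$ and the scalar prefactors match. The paper packages this by evaluating on orbit characteristic functions, recognizing both sides as $g^{\gamma}_{\alpha\beta}$ times the stated scalars, and extending by linearity, whereas you prove the operator identity for general $f$ directly via the principal-bundle description of the correspondence; the content is identical.
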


\begin{proof}
For any $\beta\in\mathcal{P}$ with dimension vector $\nu$, $\gamma\in\mathcal{P}$ with dimension vector $\nu'=\nu+\alpha$ and $x_{\gamma}\in\mathcal{O}_{\gamma}$,
\begin{eqnarray*}\mathcal{E}^{-}_{\hat{Q},\alpha}(\mathbf{1}_{\beta})(x_{\gamma})&=&
(-v)^{\textrm{dim}V_{\alpha}}v^{\langle{\alpha,\nu}\rangle+\textrm{dim}V_{\alpha}}
|G_{\nu}|^{-1}
p'_!p^\ast(\mathbf{1}_{\beta})(x_{\gamma})\\
&=&
(-v)^{\textrm{dim}V_{\alpha}}v^{\langle{\alpha,\nu}\rangle+\textrm{dim}V_{\alpha}}
|G_{\nu}|^{-1}
\sum_{(x,x_{\gamma},y)\in Z_{\hat{Q},\alpha}}p^\ast(\mathbf{1}_{\beta})(x,x_{\gamma},y)\\
&=&
(-v)^{\textrm{dim}V_{\alpha}}v^{\langle{\alpha,\nu}\rangle+\textrm{dim}V_{\alpha}}
|G_{\nu}|^{-1}
\sum_{(x,x_{\gamma},y)\in Z_{\hat{Q},\alpha}}\mathbf{1}_{\beta}(x)\\
&=&
(-v)^{\textrm{dim}V_{\alpha}}v^{\langle{\alpha,\nu}\rangle+\textrm{dim}V_{\alpha}}
|G_{\nu}|^{-1}
|(x,x_{\gamma},y)\in Z_{\hat{Q},\alpha},x\in\mathcal{O}_{\beta}|\\
&=&
(-v)^{\textrm{dim}V_{\alpha}}v^{\langle{\alpha,\nu}\rangle+\textrm{dim}V_{\alpha}}
|G_{\nu}|^{-1}
|G_{\nu}|g_{\alpha\beta}^{\gamma}\\
&=&(-v)^{\textrm{dim}V_{\alpha}}v^{\langle{\alpha,\nu}\rangle+\textrm{dim}V_{\alpha}+m_{\alpha,\nu}}\mathbf{1}_{\alpha}\ast\mathbf{1}_{\beta}(x_{\gamma}).
\end{eqnarray*}
That is $\mathcal{E}^{-}_{\hat{Q},\alpha}(\mathbf{1}_{\beta})=(-v)^{\textrm{dim}V_{\alpha}}v^{\langle{\alpha,\nu}\rangle+\textrm{dim}V_{\alpha}+m_{\alpha,\nu}}\mathbf{1}_{\alpha}\ast \mathbf{1}_{\beta}$.

For any $f\in\mathcal{F}_{G_{\nu,I}}(E_{\nu,\hat{Q}})$, $f$ is a linear combination of $\mathbf{1}_{\beta}$ for various $\beta\in\mathcal{P}$ and we have  $\mathcal{E}^{-}_{\hat{Q},\alpha}(f)=(-v)^{\textrm{dim}V_{\alpha}}v^{\langle{\alpha,\nu}\rangle+\textrm{dim}V_{\alpha}+m_{\alpha,\nu}}\mathbf{1}_{\alpha}\ast f$.
\end{proof}

Similarly to Proposition \ref{proposition_fourier_multi}, we have the following proposition.
\begin{proposition}[\cite{Sevenhant_Van_den_Bergh_On_the_double_of_the_Hall_algebra_of_a_quiver}]\label{proposition_fourier_multi-1}
For two functions $f_1\in\mathcal{F}_{G_{\alpha,I}}(E_{\alpha,Q})$ and $f_2\in\mathcal{F}_{G_{\nu,I}}(E_{\nu,\hat{Q}})$,
$$\Phi_{\hat{Q},\hat{Q}'}(f_1\ast f_2)=\Phi_{Q,Q'}(f_1)\ast\Phi_{\hat{Q},\hat{Q}'}(f_2),$$
where $Q=(I,H)$ and $Q'=(I,H')$ are two quivers with the same underlying graph and different orientations, $\hat{Q}=(\hat{I},\hat{H})$ and $\hat{Q}'=(\hat{I},\hat{H}')$ are enlarged quivers of $Q$ and $Q'$ respectively.
\end{proposition}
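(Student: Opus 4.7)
The plan is to mimic the proof of Proposition \ref{proposition_fourier_multi} from \cite{Sevenhant_Van_den_Bergh_On_the_double_of_the_Hall_algebra_of_a_quiver}, working directly with the correspondence
$$E_{\alpha,Q} \times E_{\nu,\hat{Q}} \xleftarrow{p_1} E' \xrightarrow{p_2} E'' \xrightarrow{p_3} E_{\nu',\hat{Q}}$$
defining $\ast$. Since the scalar $v^{-m_{\alpha,\nu}}$ depends only on dimension vectors, which are preserved by Fourier transforms, it suffices to show that $\mathrm{Ind}$ intertwines the Fourier transforms appropriately. First I would decompose $\Phi_{\hat{Q},\hat{Q}'}$ and $\Phi_{Q,Q'}$ as compositions of single-arrow Fourier transforms, so that one may assume $\hat{Q}$ and $\hat{Q}'$ differ in the orientation of exactly one arrow $h\in \hat{H}$.

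\emph{Case 1: $h=h_i$ is one of the extra arrows.} Then $Q=Q'$ and $\Phi_{Q,Q'}$ is the identity, so the claim reduces to $\Phi_{\hat{Q},\hat{Q}'}(f_1\ast f_2)=f_1\ast \Phi_{\hat{Q},\hat{Q}'}(f_2)$. The $h_i$-coordinate appears as a linear factor in each of $E_{\nu,\hat{Q}}$, $E_{\nu',\hat{Q}}$, $E'$, $E''$, and the correspondence maps $p_1,p_2,p_3$ are linear in this factor, since the $x$-stability condition for $\mathbf{W}$ imposes a linear constraint on $x_{h_i}$ for fixed $\mathbf{W}$. Moreover, $E_{\alpha,Q}$ does not involve $h_i$ at all, so pullback of $f_1$ via $p_1$ is Fourier-transparent. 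Standard Fourier transform identities---commutation with pullback along linear maps, with the inverse of a principal bundle quotient, and with proper pushforward along linear maps---then yield the identity.

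\emph{Case 2: $h\in H$.} Now both $Q\to Q'$ and $\hat{Q}\to \hat{Q}'$ reverse the arrow $h$, and each of $E_{\alpha,Q}$, $E_{\nu,\hat{Q}}$, $E_{\nu',\hat{Q}}$, $E'$, $E''$ contains a linear factor coming from $h$. The correspondence maps respect this factorization, so the Sevenhant--Van den Bergh calculation showing that the combined Fourier transform along $h$ on the factor $E_{\alpha,Q}\times E_{\nu,\hat{Q}}$ (through $\Phi_{Q,Q'}$ on $f_1$ and through $\Phi_{\hat{Q},\hat{Q}'}$ on $f_2$) commutes with $\mathrm{Ind}$ applies verbatim. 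The remaining extra arrows $h_j$ of $\hat{H}$ contribute trivial parameter factors that are unchanged by a Fourier transform along $h$.

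The main obstacle is the bookkeeping in Case 2, where one must verify that the $x$-stability condition defining $E''$ decomposes cleanly with respect to the single flipped arrow $h$, so that the Fourier transform can be isolated to the linear factor corresponding to $h$ before being commuted through the correspondence. This works because $x$-stability is a collection of constraints indexed by arrows, and the constraint coming from $h$ is linear in $x_h$ for fixed $\mathbf{W}$; thus the argument from \cite{Sevenhant_Van_den_Bergh_On_the_double_of_the_Hall_algebra_of_a_quiver} goes through without change once the extra arrows $h_j$ are recognized as trivial parameters.
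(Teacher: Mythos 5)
Your overall strategy---reduce to reversing a single arrow and adapt the Sevenhant--Van den Bergh computation case by case---is exactly what the paper intends: the paper offers no proof of its own beyond the remark that the statement holds ``similarly to'' Proposition 2.2 together with the citation, so your split into Case 1 (an extra arrow $h_i$, where $\Phi_{Q,Q'}$ is the identity and $E_{\alpha,Q}$ does not see the flipped coordinate) and Case 2 (an arrow $h\in H$, where the cited calculation applies) is a sensible and essentially correct way to organize the adaptation.

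There is, however, one concrete misstatement, and it sits at the heart of the matter: the twisting exponent $m_{\alpha,\nu}=\sum_{h\in\hat{H}}\alpha_{s(h)}\nu_{t(h)}+\sum_{j\in\hat{I}}\alpha_{j}\nu_{j}$ is \emph{not} determined by the dimension vectors alone; it depends on the orientation and changes when an arrow is reversed. This already happens in your Case 1: since $\alpha$ is supported on $I$, reversing $h_i$ from $i\to\hat{i}$ to $\hat{i}\to i$ replaces the term $\alpha_i\nu_{\hat{i}}$ by $\alpha_{\hat{i}}\nu_i=0$, so the two sides of the identity carry different normalizing scalars. The content of the proposition is precisely that the powers of $q$ produced when the single-arrow Fourier transform is commuted past $p_1^{\ast}$, $(p_2^{\ast})^{-1}$ and $(p_3)_{!}$ (the linear fibres over the flipped coordinate have different dimensions before and after the flip, because the $x$-stability constraint on $x_h$ cuts out subspaces whose dimensions depend on the direction of $h$) exactly compensate the discrepancy between $m_{\alpha,\nu}$ computed in $\hat{Q}$ and in $\hat{Q}'$. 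So you cannot discard the scalar at the outset as ``preserved by Fourier transforms''; it must be carried through the correspondence and the cancellation verified, as in the cited reference. With that bookkeeping restored, the argument is complete.
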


By using Proposition \ref{proposition_relation_multi} and \ref{proposition_fourier_multi-1}, we have the following proposition.

\begin{proposition}\label{proposition_N_f}
For any enlarged quiver $\hat{Q}$, we have
$$\mathcal{E}^{-}_{\hat{Q},\alpha}(\mathcal{N}_{\nu,\hat{Q}})\subset\mathcal{N}_{\nu+\alpha,\hat{Q}}.$$
\end{proposition}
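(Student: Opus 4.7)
The plan is to reduce the containment to a vertex-by-vertex statement and then exploit the compatibility of the multiplication $\ast$ with the Fourier transform. Since $\mathcal{N}_{\nu,\hat{Q}}$ is by definition generated by the subspaces $\mathcal{N}_{\nu,\hat{Q},i}$ for $i\in I$, it suffices to prove, for every $i\in I$, that $\mathcal{E}^{-}_{\hat{Q},\alpha}(\mathcal{N}_{\nu,\hat{Q},i})\subset \mathcal{N}_{\nu+\alpha,\hat{Q},i}$. Fix such an $i$ and an enlarged quiver ${^i\hat{Q}}$ with the same underlying graph as $\hat{Q}$ in which $i$ is a source, so that $\mathcal{N}_{\nu,\hat{Q},i}=\Phi_{\hat{Q},{^i\hat{Q}}}^{-1}(\mathcal{N}_{\nu,{^i\hat{Q}},i})$ by definition.

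The first step is to translate $\mathcal{E}^{-}_{\hat{Q},\alpha}$ into a convolution, for which the Fourier transform behaves well. By Proposition \ref{proposition_relation_multi}, $\mathcal{E}^{-}_{\hat{Q},\alpha}(f)=c\cdot(\mathbf{1}_{\alpha}\ast f)$ for an explicit nonzero scalar $c$, and combining this with Proposition \ref{proposition_fourier_multi-1} I obtain
$$\Phi_{\hat{Q},{^i\hat{Q}}}\bigl(\mathcal{E}^{-}_{\hat{Q},\alpha}(f)\bigr)=c\cdot\Phi_{Q,{^iQ}}(\mathbf{1}_{\alpha})\ast\Phi_{\hat{Q},{^i\hat{Q}}}(f).$$
Writing $g_1=\Phi_{Q,{^iQ}}(\mathbf{1}_{\alpha})\in\mathcal{F}_{G_{\alpha,I}}(E_{\alpha,{^iQ}})$ and $g_2=\Phi_{\hat{Q},{^i\hat{Q}}}(f)\in\mathcal{F}_{G_{\nu,I}}(E_{\nu,{^i\hat{Q}}})$, the hypothesis $f\in\mathcal{N}_{\nu,\hat{Q},i}$ becomes $g_2\in\mathcal{N}_{\nu,{^i\hat{Q}},i}$, i.e.\ $g_2$ vanishes on ${^iE_{\nu,{^i\hat{Q}}}}$; and the goal reduces to showing that $g_1\ast g_2$ vanishes on ${^iE_{\nu+\alpha,{^i\hat{Q}}}}$.

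The crux is then a short direct check using the correspondence defining $\ast$ in Section 3.4. At any $x\in E_{\nu+\alpha,{^i\hat{Q}}}$, the value $(g_1\ast g_2)(x)$ is, up to a fixed normalization factor, a sum over $x$-stable $\hat{I}$-graded subspaces $\mathbf{W}\subset \mathbf{V}'$ of dimension $\nu$ of terms of the form $g_1(x|_{\mathbf{V}'/\mathbf{W}})\,g_2(x|_{\mathbf{W}})$. If $x\in{^iE_{\nu+\alpha,{^i\hat{Q}}}}$ then the map $\bigoplus_{h\in\hat{H},\,s(h)=i}x_h\colon V'_i\to\bigoplus V'_{t(h)}$ is injective; since $i$ is a source of ${^i\hat{Q}}$ and $\mathbf{W}$ is $x$-stable, the restriction of this map to $W_i$ is still injective, whence $x|_{\mathbf{W}}\in{^iE_{\nu,{^i\hat{Q}}}}$ and $g_2(x|_{\mathbf{W}})=0$. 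Every summand therefore vanishes, giving $(g_1\ast g_2)(x)=0$, and undoing the Fourier transform yields $\mathcal{E}^{-}_{\hat{Q},\alpha}(f)\in\mathcal{N}_{\nu+\alpha,\hat{Q},i}\subset\mathcal{N}_{\nu+\alpha,\hat{Q}}$.

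I do not expect a serious obstacle: the only essential input is the elementary geometric fact that restriction of an injective linear map remains injective, which is exactly what makes the defining condition of ${^iE_{\nu,{^i\hat{Q}}}}$ stable under passage to $x$-stable subspaces. The most tedious part will be bookkeeping of the scalar $c$ and of the $G$-bundle normalizations in the convolution formula for $g_1\ast g_2$, but these factors do not affect the pointwise vanishing argument.
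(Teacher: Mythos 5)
Your proof is correct and follows essentially the same route as the paper: reduce to the case where $i$ is a source via the Fourier transform and Proposition \ref{proposition_fourier_multi-1}, rewrite $\mathcal{E}^{-}_{\hat{Q},\alpha}$ as convolution with $\mathbf{1}_{\alpha}$ via Proposition \ref{proposition_relation_multi}, and observe that the convolution value at $x\in{^iE}$ only involves $g_2$ on restrictions to $x$-stable subspaces, which again lie in ${^iE}$. The only (harmless) difference is that you argue the pointwise vanishing directly for the transformed function $\Phi_{Q,{^iQ}}(\mathbf{1}_{\alpha})$, whereas the paper first expands it as $\sum_\gamma a_\gamma\mathbf{1}_\gamma$ and reuses the source case term by term; you also make explicit the stability-of-injectivity step the paper dismisses as clear.
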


\begin{proof}
Let $a(v)=(-v)^{\textrm{dim}V_{\alpha}}v^{\langle{\alpha,\nu}\rangle+\textrm{dim}V_{\alpha}+m_{\alpha,\nu}}$.

When $i\in{I}$ is a source of $\hat{Q}$, it is clear that $\mathcal{E}^{-}_{\hat{Q},\alpha}(f)=a(v)\mathbf{1}_{\alpha}\ast f\in\mathcal{N}_{\nu+\alpha,\hat{Q},i}$  for any $f\in\mathcal{N}_{\nu,\hat{Q},i}$.

For a general enlarged quiver $\hat{Q}$, choose a new enlarged quiver ${^i\hat{Q}}$ with the same underlying graph and having $i$ as source.
For any $f\in\mathcal{N}_{\nu,\hat{Q},i}$, there is a function $f'\in{{\mathcal{N}}}_{\nu,{^i\hat{Q}},i}$ such that $f=\Phi_{{^i\hat{Q}},\hat{Q}}(f')$.
Hence $$\mathcal{E}^{-}_{\hat{Q},\alpha}(f)=a(v)\mathbf{1}_{\alpha}\ast f=a(v)\mathbf{1}_{\alpha}\ast \Phi_{{^i\hat{Q}},\hat{Q}}(f').$$
Since $\mathbf{1}_{\alpha}=\Phi_{{^i{Q}},{Q}}\Phi_{{{Q}},{^i{Q}}}(\mathbf{1}_{\alpha})$,
$$\mathcal{E}^{-}_{\hat{Q},\alpha}(f)=a(v)\Phi_{{^i{Q}},{Q}}\Phi_{{{Q}},{^i{Q}}}(\mathbf{1}_{\alpha})\ast \Phi_{{^i\hat{Q}},{\hat{Q}}}(f')=a(v)\Phi_{{^i\hat{Q}},{\hat{Q}}}(\Phi_{{{Q}},{^i{Q}}}(\mathbf{1}_{\alpha})\ast f').$$
Note that $\Phi_{{{Q}},{^i{Q}}}(\mathbf{1}_{\alpha})=\sum_{\gamma\in\mathcal{P}}a_{\gamma}\mathbf{1}_{\gamma}$ for some $a_{\gamma}\in\mathbb{C}$.
Hence $$\mathcal{E}^{-}_{\hat{Q},\alpha}(f)=a(v)\Phi_{{^i\hat{Q}},{\hat{Q}}}(\sum_{\gamma\in\mathcal{P}}a_{\gamma}(\mathbf{1}_{\gamma}\ast f')).$$
Since $\mathbf{1}_{\gamma}\ast f'\in\mathcal{N}_{\nu+\alpha,{^i\hat{Q}},i}$, $\mathcal{E}^{-}_{\hat{Q},\alpha}(f)\in\mathcal{N}_{\nu+\alpha,\hat{Q},i}$.

By the definition of $\mathcal{N}_{\nu,\hat{Q}}$, we get the desired result.
\end{proof}

Hence, the maps $\mathcal{E}^{-}_{\hat{Q},\alpha}:\mathcal{F}_{G_{\nu,I}}(E_{\nu,\hat{Q}})\rightarrow\mathcal{F}_{G_{\nu+\alpha,I}}(E_{\nu+\alpha,\hat{Q}})$ induce the following maps
$$\mathcal{E}^{-}_{\hat{Q},\alpha}:\mathcal{F}_{\nu,\hat{Q}}\rightarrow\mathcal{F}_{\nu+\alpha,\hat{Q}}.$$

When $i$ is a source of $\hat{Q}$,
let
$${^iZ}_{\hat{Q},\alpha}=\{(x,x',y)\in{Z}_{\hat{Q},\alpha}\,\,|\,\,x'\in{^i{E}}_{\nu',\hat{Q}}\}.$$
Denote by ${^ip}$ and ${^ip'}$ be the restrictions of $p$ and $p'$ to ${^iZ}_{\hat{Q},\alpha}$ respectively.

For any $\alpha\in\mathcal{P}$ such that $V_i$ is not a direct summand of $V_{\alpha}$, define
$${^i\mathcal{E}}^{-}_{\hat{Q},\alpha}=(-v)^{\textrm{dim}V_{\alpha}}v^{\langle{\alpha,\nu}\rangle+\textrm{dim}V_{\alpha}}|G_{\nu}|^{-1}({^ip'})_!({^ip})^\ast:
\mathcal{F}_{G_{\nu,I}}({^iE}_{\nu,\hat{Q}})\rightarrow\mathcal{F}_{G_{\nu',I}}({^iE}_{\nu',\hat{Q}}).$$

By Proposition \ref{proposition_N_f}, we have the following commutative diagram on the relation between ${^i\mathcal{E}}^{-}_{\hat{Q},\alpha}$ and ${\mathcal{E}}^{-}_{\hat{Q},\alpha}$
\begin{equation}\label{cd_3}
\xymatrix{\mathcal{F}_{G_{\nu,I}}({E}_{\nu,\hat{Q}})\ar[r]^-{{\mathcal{E}}^{-}_{\hat{Q},\alpha}}\ar[d]^-{j^\ast_{\nu,{\hat{Q}}}}&
\mathcal{F}_{G_{\nu',I}}({E}_{\nu',\hat{Q}})\ar[d]^-{j^\ast_{\nu',{\hat{Q}}}}\\
\mathcal{F}_{G_{\nu,I}}({^iE}_{\nu,\hat{Q}})\ar[r]^-{{^i\mathcal{E}}^{-}_{\hat{Q},\alpha}}&
\mathcal{F}_{G_{\nu',I}}({^iE}_{\nu',\hat{Q}})
.}
\end{equation}

\subsection{Main results}

\begin{theorem}\label{MT0}
On the maps $\mathcal{E}^{+}_{\hat{Q},i}(i\in I)$, $\mathcal{E}^{-}_{\hat{Q},\alpha}(\alpha\in\mathcal{P})$ and $\mathcal{K}^{\pm1}_{\hat{Q},i}(i\in I)$, we have the following relations:
\begin{enumerate}
  \item[(1)]$\mathcal{K}_{\hat{Q},i}\mathcal{K}_{\hat{Q},j}=\mathcal{K}_{\hat{Q},j}\mathcal{K}_{\hat{Q},i}$ for any $i,j\in I$,
  \item[(2)]$\mathcal{K}_{\hat{Q},j}\mathcal{E}^{+}_{\hat{Q},i}=v^{(j,i)}\mathcal{E}^{+}_{\hat{Q},i}\mathcal{K}_{\hat{Q},j}$ for any $i,j\in I$,
  \item[(3)]$\mathcal{K}_{\hat{Q},j}\mathcal{E}^{-}_{\hat{Q},\alpha}=v^{-(\alpha,j)}\mathcal{E}^{-}_{\hat{Q},\alpha}\mathcal{K}_{\hat{Q},j}$ for any $j\in I$ and $\alpha\in\mathcal{P}$,
  \item[(4)]$\sum_{m=0}^{1-(i,j)}(-1)^m(\mathcal{E}^{+}_{\hat{Q},i})^{(m)}\mathcal{E}^{+}_{\hat{Q},j}(\mathcal{E}^{+}_{\hat{Q},i})^{(1-(i,j)-m)}=0$ for any $i\neq j\in I$,
  \item[(5)]$\mathcal{E}^{-}_{\hat{Q},\alpha}\mathcal{E}^{-}_{\hat{Q},\beta}=v^{\langle{\alpha,\beta}\rangle}\sum_{\lambda\in\mathcal{P}}g_{\alpha\beta}^{\lambda}\mathcal{E}^{-}_{\hat{Q},\lambda}$
      for any $\alpha,\beta\in\mathcal{P}$,
  \item[(6)]$\mathcal{E}^{-}_{\hat{Q},\alpha}\mathcal{E}^{+}_{\hat{Q},i}-\mathcal{E}^{+}_{\hat{Q},i}\mathcal{E}^{-}_{\hat{Q},\alpha}=
  \frac{|V_i|}{a_{\alpha}}\sum_{\beta\in\mathcal{P}}a_{\beta}\mathcal{E}^{-}_{\hat{Q},\beta}
  (v^{-\langle{\beta,i}\rangle}g_{i\beta}^{\alpha}\mathcal{K}_{\hat{Q},i}-v^{\langle{\beta,i}\rangle}g_{\beta i}^{\alpha}\mathcal{K}^{-1}_{\hat{Q},i})$ for any $\alpha\in\mathcal{P}$ and $i\in I$.
\end{enumerate}
\end{theorem}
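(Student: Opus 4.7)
The plan is to verify the six relations in order, with (1)--(5) admitting short arguments based on the structure already in place and (6) being the crux, as the introduction emphasizes. Relation (1) is immediate, since each $\mathcal{K}_{\hat{Q},j}$ acts on $\mathcal{F}_{\nu,\hat{Q}}$ as the scalar $v^{\bar{\nu}_j-\nu_j}$. For (2) and (3) I would compute how this exponent changes when $\nu$ is shifted by $i\in I$, respectively by $\alpha\in\mathcal{P}$; because the added arrows $h_k$ of $\hat{H}$ connect $I$ only to $\hat{I}\setminus I$, the count involves only the arrows of $H$ and reproduces the symmetric Euler form $-(j,i)$, respectively $(\alpha,j)$, giving the required $v^{(j,i)}$ and $v^{-(\alpha,j)}$ after moving $\mathcal{K}_{\hat{Q},j}$ past $\mathcal{E}^+_{\hat{Q},i}$ or $\mathcal{E}^-_{\hat{Q},\alpha}$.

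For (4), the first step is to invoke Proposition~\ref{proposition_fourier_e} to assume that $i$ is a source of $\hat{Q}$, and then to use diagram~(\ref{cd_2}) to transfer the question from $\mathcal{F}_{\nu,\hat{Q}}$ to the subvariety ${^iE}_{\nu,\hat{Q}}$, where the identity becomes the direct functional analog of the one Zheng verifies for $\mathcal{E}^{(n)}_{\Omega,i}$ in~\cite{zheng2008categorification}. For (5) the key tool is Proposition~\ref{proposition_relation_multi}: writing $\mathcal{E}^-_{\hat{Q},\alpha}(f) = a_\alpha(v)\,\mathbf{1}_\alpha \ast f$ for the scalar $a_\alpha(v)$ appearing in its proof, applying this twice, using associativity of $\ast$, and substituting the product $\mathbf{1}_\alpha \ast \mathbf{1}_\beta = v^{-m_{\alpha,\beta}}\sum_\lambda g^\lambda_{\alpha\beta}\mathbf{1}_\lambda$ reduces the identity to a routine check that the contributions from $\langle\alpha,\beta\rangle$, $m_{\alpha,\beta}$, $m_{\alpha+\beta,\nu}$, and $m_{\alpha,\nu}+m_{\beta,\nu}$ balance (which follows from the identities $m_{\alpha,\nu+\beta}=m_{\alpha,\nu}+m_{\alpha,\beta}$ and $m_{\alpha+\beta,\nu}=m_{\alpha,\nu}+m_{\beta,\nu}$).

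Relation (6) is the main obstacle. My plan is to reduce via Propositions~\ref{proposition_fourier_e} and~\ref{proposition_fourier_multi-1} to the case where $i$ is a source of $\hat{Q}$, and then to use diagrams~(\ref{cd_2}) and~(\ref{cd_3}) to rewrite both compositions $\mathcal{E}^-_{\hat{Q},\alpha}\mathcal{E}^+_{\hat{Q},i}$ and $\mathcal{E}^+_{\hat{Q},i}\mathcal{E}^-_{\hat{Q},\alpha}$ in terms of their ${^i}$-versions on $\mathcal{F}_{G_{\cdot,I}}({^iE}_{\cdot,\hat{Q}})$. I would then write down the two fiber products representing these compositions, apply base change, and evaluate on a test function $\mathbf{1}_\gamma$ supported on an orbit $\mathcal{O}_\gamma$. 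The expectation, following the usual Hall-algebra commutator pattern, is that the generic stratum (where the added $V_\alpha$ and the removed simple summand $V_i$ meet trivially) cancels between the two orderings, while the two boundary strata, corresponding to the short exact sequences $0\to V_\beta\to V_\alpha\to V_i\to 0$ and $0\to V_i\to V_\alpha\to V_\beta\to 0$, produce the terms indexed by $g^\alpha_{i\beta}$ and $g^\alpha_{\beta i}$ respectively. The hardest part will be the careful bookkeeping of the $v$-twists coming from $a_\alpha(v)$, $m_{\alpha,\nu}$, and the scalars hidden in $\mathcal{K}_{\hat{Q},i}^{\pm 1}$, together with the orbit counts that yield the prefactor $\frac{|V_i|}{a_\alpha}a_\beta$; this will be the functional analog of Xiao's Hopf-pairing derivation of Proposition~\ref{proposition_xiao-yang}, obtained here directly from correspondences rather than from a skew pairing.
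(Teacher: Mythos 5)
Your treatment of (1)--(5) matches the paper: (1)--(3) are indeed immediate from the definitions since $\mathcal{K}_{\hat{Q},j}$ is a scalar on each graded piece, (4) is taken from Theorem 3.9(8) of \cite{zheng2008categorification} after reducing to the source case, and (5) is proved exactly as you describe, via Proposition \ref{proposition_relation_multi}, associativity of $\ast$, and the bookkeeping identities for $m_{\alpha,\nu}$.

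For (6), however, your plan has a genuine gap. After you pass to the source case and restrict to ${^iE}_{\nu,\hat{Q}}$ via the diagrams (\ref{cd_1})--(\ref{cd_3}), the two boundary strata do \emph{not} both survive: on the open locus where the maps out of $V_i$ are injective, and under the additional hypothesis that $V_i$ is not a direct summand of $V_\alpha$, the entire $\mathcal{K}^{-1}_{\hat{Q},i}$-term $\sum_\beta a_\beta v^{\langle\beta,i\rangle}g^{\alpha}_{\beta i}(\cdots)$ vanishes, and the correspondence computation (the analogue of your ``generic stratum cancels'') produces only the $g^{\alpha}_{i\beta}\mathcal{K}_{\hat{Q},i}$ term. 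Moreover ${^i\mathcal{E}}^-_{\hat{Q},\alpha}$ is only defined when $V_i$ is not a summand of $V_\alpha$, so the direct computation cannot even be set up for general $\alpha$. The paper therefore proceeds in three steps: (a) the special case $V_i\nmid V_\alpha$ by comparing the two fiber products $Y_1$ and $Y_2$, where the off-diagonal strata are matched by a pull-back/push-out bijection $Y_{12}/G_\mu\cong Y_{22}/G_{\mu'}$ and the remaining stratum is counted by a Green-formula argument giving the factor $v^{-2\langle i,\nu\rangle}|G_\mu|g^{\alpha}_{i\beta}a_\beta/a_\alpha$; (b) the rank-one case $\alpha=ki$, imported from Theorem 3.9(6) of \cite{zheng2008categorification}; and (c) the general case via the factorization $\mathcal{E}^-_{\hat{Q},\alpha}=v^{-\langle\alpha',ki\rangle}\mathcal{E}^-_{\hat{Q},\alpha'}\mathcal{E}^-_{\hat{Q},ki}$ (a consequence of relation (5)), which recombines (a) and (b) into the full two-term commutator. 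Your proposal needs this reduction; a single symmetric stratification argument of the Hall-algebra type will not yield the $g^{\alpha}_{\beta i}\mathcal{K}^{-1}_{\hat{Q},i}$ contribution in this geometric setting.
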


The Relation (1)-(3) are directed from the definitions. The Relation (4) is Theorem 3.9 (8) in \cite{zheng2008categorification}.
The proofs of Relation (5) and (6) will be given in Section \ref{proof_mt1&2}.

As a corollary of Theorem \ref{MT0} and Proposition \ref{proposition_definition-relation}, we have the following theorem.

\begin{theorem}\label{MT1}
By defining
$$K_{\pm i}.f=\mathcal{K}^{\pm1}_{\hat{Q},i}(f)\,\,,{u^+_{i}}.f=\mathcal{E}^{+}_{\hat{Q},i}(f)\textrm{ and }{u^-_{\alpha}}.f=\mathcal{E}^{-}_{\hat{Q},\alpha}(f),$$
the space $\mathcal{F}_{\hat{Q}}$ becomes a left $\hat{{D}}(Q)$-module.
\end{theorem}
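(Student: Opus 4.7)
The plan is to reduce the module structure on $\mathcal{F}_{\hat{Q}}$ to the universal property of a presentation by generators and relations. Proposition \ref{proposition_definition-relation} exhibits $\hat{D}_q(Q)$ as the free associative algebra on the symbols $u^+_i$ $(i\in I)$, $u^-_\alpha$ $(\alpha\in\mathcal{P})$, $K_\mu$ $(\mu\in\mathbb{Z}I)$ modulo the six explicit families of relations listed there. To endow $\mathcal{F}_{\hat{Q}}$ with a left $\hat{D}_q(Q)$-module structure it therefore suffices to exhibit six endomorphisms of $\mathcal{F}_{\hat{Q}}$ (one family per generator) satisfying the corresponding six relations; the universal property then produces a unique algebra homomorphism $\hat{D}_q(Q)\rightarrow \mathrm{End}_{\mathbb{Q}(v)}(\mathcal{F}_{\hat{Q}})$, which is precisely the module structure described in the statement.

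The first step is to check well-definedness on the quotient $\mathcal{F}_{\hat{Q}}=\bigoplus_\nu \mathcal{F}_{G_{\nu,I}}(E_{\nu,\hat{Q}})/\mathcal{N}_{\nu,\hat{Q}}$. For $\mathcal{K}^{\pm1}_{\hat{Q},i}$ this is Proposition \ref{proposition_zheng_K_2}; for $\mathcal{E}^{+}_{\hat{Q},i}$ it follows from Proposition \ref{proposition_zheng_e_1} combined with the Fourier-transform argument used to define $\mathcal{E}^{+}_{\hat{Q},ni}$ for a general orientation; and for $\mathcal{E}^{-}_{\hat{Q},\alpha}$ it is Proposition \ref{proposition_N_f}. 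Hence each of the three families descends to an endomorphism of $\mathcal{F}_{\hat{Q}}$.

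The second step is to extend the action of the generators $\{K_{\pm i}\}_{i\in I}$ declared in the statement to all $K_\mu$ with $\mu\in\mathbb{Z}I$. Using relation (1) of Theorem \ref{MT0}, namely $\mathcal{K}_{\hat{Q},i}\mathcal{K}_{\hat{Q},j}=\mathcal{K}_{\hat{Q},j}\mathcal{K}_{\hat{Q},i}$, we define unambiguously $K_\mu.f=\prod_{i\in I}(\mathcal{K}_{\hat{Q},i})^{\mu_i}(f)$ for $\mu=\sum_i\mu_i i$. This automatically satisfies $K_\mu K_\nu=K_{\mu+\nu}$ and $K_0=\mathrm{Id}$, giving relation (1) of Proposition \ref{proposition_definition-relation}. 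The remaining relations (2)--(6) of Proposition \ref{proposition_definition-relation} are, once the definition of $K_\mu$ is unfolded on each weight component, literally relations (2)--(6) of Theorem \ref{MT0}.

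The only real work is therefore buried in Theorem \ref{MT0}: relations (1)--(3) are immediate from the scalar definition of $\mathcal{K}_{\hat{Q},i}$ and a weight count, relation (4) is cited from Zheng's \cite{zheng2008categorification}, and relations (5) and (6) are the substantive identities postponed to Section \ref{proof_mt1&2}. So the main obstacle is not in the corollary itself but in relation (6) of Theorem \ref{MT0}, i.e.\ the commutator between $\mathcal{E}^{-}_{\hat{Q},\alpha}$ and $\mathcal{E}^{+}_{\hat{Q},i}$, since relation (5) for $\mathcal{E}^{-}_{\hat{Q},\alpha}\mathcal{E}^{-}_{\hat{Q},\beta}$ follows from the alternative description in Proposition \ref{proposition_relation_multi} together with associativity of the convolution product $\ast$. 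Assuming Theorem \ref{MT0}, the corollary is immediate from the universal property, and Theorem \ref{MT1} follows.
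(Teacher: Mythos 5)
Your proposal is correct and takes essentially the same route as the paper, which presents Theorem \ref{MT1} precisely as a corollary of Theorem \ref{MT0} together with the presentation of $\hat{D}_q(Q)$ in Proposition \ref{proposition_definition-relation}. The additional bookkeeping you make explicit (descent of the three families of maps to the quotient by $\mathcal{N}_{\nu,\hat{Q}}$, and the extension of the action from $K_{\pm i}$ to all $K_{\mu}$ via commutativity of the $\mathcal{K}_{\hat{Q},i}$) is left implicit in the paper but is exactly what is needed.
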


For any $\omega=\sum_{i\in I}\omega_i{i}\in\mathbb{Z}I$, let $\mathbf{1}_{\omega}$ be the constant function on ${E}_{\hat{\omega},\hat{Q}}$ with value $1$, where $\hat{\omega}=\sum_{i\in I}\omega_i\hat{i}$.
Denote by $\mathcal{F}_{\hat{Q},\omega}$ the submodule of $\mathcal{F}_{\hat{Q}}$ generated by $\mathbf{1}_{\omega}$.
By the definition, $\mathcal{F}_{\hat{Q},\omega}$ is a highest weight module of $\hat{D}_q(Q)$ with highest weigh $\omega$.

For the relation between $\mathcal{F}_{\hat{Q},\omega}$ and $L(\omega)$, we have the following theorem.

\begin{theorem}\label{MT2}
There is an epimorphism of $\hat{{D}}(Q)$-modules from $L(\omega)$ to $\mathcal{F}_{\hat{Q},\omega}$.
\end{theorem}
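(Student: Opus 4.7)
The plan is to define the epimorphism explicitly by sending the canonical generator $\eta_{\omega}$ of $L(\omega)$ to the constant function $\mathbf{1}_{\omega}$ of $\mathcal{F}_{\hat{Q},\omega}$. Concretely, consider the map $\tilde\phi:\hat{D}_q(Q)\to\mathcal{F}_{\hat{Q},\omega}$ given by $\tilde\phi(x)=x.\mathbf{1}_{\omega}$, where the action is the one from Theorem \ref{MT1}. This is a homomorphism of left $\hat{D}_q(Q)$-modules by construction, and it is surjective because $\mathcal{F}_{\hat{Q},\omega}$ is, by definition, the submodule generated by $\mathbf{1}_{\omega}$. The real task is to show that $\tilde\phi$ annihilates the left ideal $J\subset\hat{D}_q(Q)$ generated by $u^{+}_{i}$, $K_{i}-v^{\omega_{i}}$ and $(u^{-}_{i})^{\omega_{i}+1}$ for all $i\in I$, which would yield the desired factorization $L(\omega)=\hat{D}_q(Q)/J\twoheadrightarrow\mathcal{F}_{\hat{Q},\omega}$.

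It therefore suffices to verify (a) $u^{+}_{i}.\mathbf{1}_{\omega}=0$, (b) $K_{i}.\mathbf{1}_{\omega}=v^{\omega_{i}}\mathbf{1}_{\omega}$ and (c) $(u^{-}_{i})^{\omega_{i}+1}.\mathbf{1}_{\omega}=0$ in $\mathcal{F}_{\hat{Q}}$. Item (a) is immediate from degree considerations, since $\mathbf{1}_{\omega}$ lies in $\mathcal{F}_{\hat{\omega},\hat{Q}}$ with $(\hat{\omega})_{i}=0$, so $\mathcal{E}^{+}_{\hat{Q},i}$ would land in a formal degree $\hat{\omega}-i\notin\mathbb{N}\hat{I}$ and is zero by convention. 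For (b), the formula $\mathcal{K}_{\hat{Q},i}=v^{\bar{\nu}_{i}-\nu_{i}}\mathrm{Id}$ with $\nu=\hat{\omega}$ gives $\nu_{i}=0$, and in the sum $\bar{\nu}_{i}=\sum_{s(h)=i}\nu_{t(h)}+\sum_{t(h)=i}\nu_{s(h)}-\nu_{i}$ the only nonzero contribution is that of the extra arrow $h_{i}$ between $i$ and $\hat{i}$, since every other edge at $i$ has its other endpoint in $I$, where $\hat{\omega}$ vanishes. This forces $\bar{\hat{\omega}}_{i}=\omega_{i}$ and hence $\mathcal{K}_{\hat{Q},i}(\mathbf{1}_{\omega})=v^{\omega_{i}}\mathbf{1}_{\omega}$.

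The main obstacle is (c), and the strategy is to prove the stronger statement that the whole component $\mathcal{F}_{\hat{\omega}+(\omega_{i}+1)i,\hat{Q}}$ vanishes; the element $(u^{-}_{i})^{\omega_{i}+1}.\mathbf{1}_{\omega}$ lives there by iterated application of $\mathcal{E}^{-}_{\hat{Q},i}$ together with Proposition \ref{proposition_N_f}. Choose an enlarged quiver ${}^{i}\hat{Q}$ in which $i$ is a source, orienting $h_{i}$ as $i\to\hat{i}$. The subvariety ${}^{i}E_{\hat{\omega}+(\omega_{i}+1)i,{}^{i}\hat{Q}}$ consists of representations for which the map $V_{i}\to\bigoplus_{s(h)=i}V_{t(h)}$ is injective. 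In ${}^{i}\hat{Q}$ the arrows starting at $i$ are $h_{i}$ (whose target $\hat{i}$ has dimension $\omega_{i}$ in this dimension vector) together with the arrows of ${}^{i}Q$ starting at $i$ (whose targets lie in $I$ and therefore have dimension $0$); the total target dimension is thus $\omega_{i}$, strictly less than $\dim V_{i}=\omega_{i}+1$. Hence ${}^{i}E_{\hat{\omega}+(\omega_{i}+1)i,{}^{i}\hat{Q}}=\emptyset$, every function on $E_{\hat{\omega}+(\omega_{i}+1)i,{}^{i}\hat{Q}}$ trivially has support disjoint from it, and so $\mathcal{N}_{\hat{\omega}+(\omega_{i}+1)i,{}^{i}\hat{Q},i}$ is the entire function space. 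By the Fourier invariance $\Phi_{\hat{Q},{}^{i}\hat{Q}}(\mathcal{N}_{\nu,\hat{Q},i})=\mathcal{N}_{\nu,{}^{i}\hat{Q},i}$, the same holds before Fourier transform, so $\mathcal{F}_{\hat{\omega}+(\omega_{i}+1)i,\hat{Q}}=0$. This proves (c), and with (a)--(c) in hand, $\tilde\phi$ descends to the required epimorphism $L(\omega)\twoheadrightarrow\mathcal{F}_{\hat{Q},\omega}$.
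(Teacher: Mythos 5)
Your proof is correct and follows the same route as the paper: both factor the evaluation map $\hat{D}_q(Q)\to\mathcal{F}_{\hat{Q},\omega}$, $x\mapsto x.\mathbf{1}_{\omega}$, through $L(\omega)$ by checking that the defining left ideal acts by zero on $\mathbf{1}_{\omega}$. The paper merely asserts that the image of this ideal vanishes, whereas you supply the three verifications (the degree argument for $u^{+}_{i}$, the weight computation $\mathcal{K}_{\hat{Q},i}(\mathbf{1}_{\omega})=v^{\omega_{i}}\mathbf{1}_{\omega}$, and the emptiness of ${}^{i}E_{\hat{\omega}+(\omega_{i}+1)i,{}^{i}\hat{Q}}$ forcing $\mathcal{F}_{\hat{\omega}+(\omega_{i}+1)i,\hat{Q}}=0$), all of which are sound.
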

\begin{proof}
There is an epimorphism of $\hat{{D}}(Q)$-modules from $\hat{{D}}(Q)$ to $\mathcal{F}_{\hat{Q},\omega}$. Since the image of the left ideal generated by ${u^+_{i}}$, $K_i-v^{\omega_i}$ and $(u^-_{i})^{\omega_i+1}$ for all $i\in I$ is $0$,  there exists an epimorphism of $\hat{{D}}(Q)$-modules from $L(\omega)$ to $\mathcal{F}_{\hat{Q},\omega}$.
\end{proof}

\section{The proofs of Relation (5) and (6) in Theorem \ref{MT0}}\label{proof_mt1&2}

\begin{proposition}[Relation (5) in Theorem \ref{MT0}]
For any $\alpha,\beta\in\mathcal{P}$, we have $$\mathcal{E}^{-}_{\hat{Q},\alpha}\mathcal{E}^{-}_{\hat{Q},\beta}=v^{\langle{\alpha,\beta}\rangle}\sum_{\gamma\in\mathcal{P}}g_{\alpha\beta}^{\gamma}\mathcal{E}^{-}_{\hat{Q},\gamma}.$$
\end{proposition}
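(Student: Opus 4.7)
The plan is to leverage Proposition~\ref{proposition_relation_multi}, which rewrites $\mathcal{E}^{-}_{\hat{Q},\alpha}$ as a scalar multiple of $\ast$-multiplication by $\mathbf{1}_{\alpha}$, and then reduce the identity to the Ringel-Hall composition rule $\mathbf{1}_{\alpha}\ast\mathbf{1}_{\beta}=v^{-m_{\alpha,\beta}}\sum_{\gamma\in\mathcal{P}}g^{\gamma}_{\alpha\beta}\mathbf{1}_{\gamma}$ recorded in Section~\ref{subsection_function1}.

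More concretely, for $f\in\mathcal{F}_{G_{\nu,I}}(E_{\nu,\hat{Q}})$ I would apply Proposition~\ref{proposition_relation_multi} twice to obtain
$$\mathcal{E}^{-}_{\hat{Q},\alpha}\mathcal{E}^{-}_{\hat{Q},\beta}(f)=C_{\beta}(\nu)\,C_{\alpha}(\nu+\beta)\,\mathbf{1}_{\alpha}\ast(\mathbf{1}_{\beta}\ast f),$$
where $C_{\alpha}(\mu):=(-v)^{\dim V_{\alpha}}v^{\langle\alpha,\mu\rangle+\dim V_{\alpha}+m_{\alpha,\mu}}$. I would then invoke associativity of the $\ast$-product, substitute $\mathbf{1}_{\alpha}\ast\mathbf{1}_{\beta}=v^{-m_{\alpha,\beta}}\sum_{\gamma}g^{\gamma}_{\alpha\beta}\mathbf{1}_{\gamma}$, and re-apply Proposition~\ref{proposition_relation_multi} in reverse to rewrite each $\mathbf{1}_{\gamma}\ast f$ as $C_{\gamma}(\nu)^{-1}\mathcal{E}^{-}_{\hat{Q},\gamma}(f)$. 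The desired equality then reduces, for every $\gamma$ with $g^{\gamma}_{\alpha\beta}\neq 0$ (so that $\gamma=\alpha+\beta$ in $\mathbb{N}I$ and $\dim V_{\gamma}=\dim V_{\alpha}+\dim V_{\beta}$), to the scalar check
$$C_{\beta}(\nu)\,C_{\alpha}(\nu+\beta)\,v^{-m_{\alpha,\beta}}=v^{\langle\alpha,\beta\rangle}\,C_{\gamma}(\nu).$$

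This scalar identity is routine bookkeeping: it follows from bilinearity of the Euler form (giving $\langle\alpha,\nu+\beta\rangle+\langle\beta,\nu\rangle=\langle\gamma,\nu\rangle+\langle\alpha,\beta\rangle$) together with bilinearity of $m_{-,-}$ in each argument (giving $m_{\alpha,\nu+\beta}+m_{\beta,\nu}-m_{\alpha,\beta}=m_{\gamma,\nu}$, where one uses that $\alpha$ and $\beta$ are supported on the $I$-vertices so the additional arrows $h_{i}$ of the enlarged quiver contribute nothing in the relevant cross-terms). The main obstacle is securing associativity $\mathbf{1}_{\alpha}\ast(\mathbf{1}_{\beta}\ast f)=(\mathbf{1}_{\alpha}\ast\mathbf{1}_{\beta})\ast f$ in this mixed setting, where the first two factors live on $E_{-,Q}$ but $f$ lives on $E_{\nu,\hat{Q}}$. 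I expect this to follow from the standard iterated-fibre-product argument used to prove associativity of $\mathcal{F}_{Q}$: both sides arise from the three-step flag variety parametrising $\hat{I}$-graded flags $\mathbf{W}_{1}\subset\mathbf{W}_{2}\subset\mathbf{V}'$ with prescribed subquotient dimension vectors $\alpha,\beta,\nu$, and the fact that $\mathbf{1}_{\alpha},\mathbf{1}_{\beta}$ only see the $I$-part of a representation means the hat-vertices and arrows $h_{i}$ enter only passively. Once this associativity is in place, the proof reduces to the scalar verification above.
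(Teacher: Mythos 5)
Your proposal is correct and follows essentially the same route as the paper: apply Proposition~\ref{proposition_relation_multi} twice, use associativity $\mathbf{1}_{\alpha}\ast(\mathbf{1}_{\beta}\ast f)=(\mathbf{1}_{\alpha}\ast\mathbf{1}_{\beta})\ast f$ (which the paper likewise asserts "similarly to the associativity of a Hall algebra"), substitute the composition rule for $\mathbf{1}_{\alpha}\ast\mathbf{1}_{\beta}$, and finish with the same bilinearity bookkeeping for $\langle-,-\rangle$ and $m_{-,-}$.
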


\begin{proof}
By Proposition \ref{proposition_relation_multi}, we have $\mathcal{E}^{-}_{\hat{Q},\alpha}(f)=(-v)^{\textrm{dim}V_{\alpha}}v^{\langle{\alpha,\nu}\rangle+\textrm{dim}V_{\alpha}+m_{\alpha,\nu}}\mathbf{1}_{\alpha}\ast f$ for any $f\in\mathcal{F}_{G_{\nu,I}}(E_{\nu,\hat{Q}})$.

Similarly to the associativity of multiplication of a Hall algebra, we have
$$\mathbf{1}_{\alpha}\ast(\mathbf{1}_{\beta}\ast f)=(\mathbf{1}_{\alpha}\ast \mathbf{1}_{\beta})\ast f.$$
Hence,
\begin{eqnarray*}
&&\mathcal{E}^{-}_{\hat{Q},\alpha}\mathcal{E}^{-}_{\hat{Q},\beta}(f)\\
&=&(-v)^{\textrm{dim}V_{\alpha}}v^{\langle{\alpha,\nu+\beta}\rangle+\textrm{dim}V_{\alpha}+m_{\alpha,\nu+\beta}}
(-v)^{\textrm{dim}V_{\beta}}v^{\langle{\beta,\nu}\rangle+\textrm{dim}V_{\beta}+m_{\beta,\nu}}
\mathbf{1}_{\alpha}\ast(\mathbf{1}_{\beta}\ast f)\\
&=&(-v)^{\textrm{dim}V_{\alpha+\beta}}v^{\langle{\alpha+\beta,\nu}\rangle+\langle{\alpha,\beta}\rangle+\textrm{dim}V_{\alpha+\beta}+m_{\alpha+\beta,\nu}+m_{\alpha,\beta}}
(\mathbf{1}_{\alpha}\ast\mathbf{1}_{\beta})\ast f\\
&=&\sum_{\gamma\in\mathcal{P}}(-v)^{\textrm{dim}V_{\gamma}}v^{\langle{\gamma,\nu}\rangle+\langle{\alpha,\beta}\rangle+\textrm{dim}V_{\gamma}+m_{\gamma,\nu}}
g_{\alpha\beta}^{\gamma}\mathbf{1}_{\gamma}\ast f\\
&=&v^{\langle{\alpha,\beta}\rangle}\sum_{\gamma\in\mathcal{P}}
g_{\alpha\beta}^{\gamma}\mathcal{E}^{-}_{\hat{Q},\gamma}(f)
\end{eqnarray*}
and we have the desired result.
\end{proof}

\begin{proposition}[Relation (6) in Theorem \ref{MT0}]\label{lemma_6}
For any $\alpha\in\mathcal{P}$ and $i\in I$,
we have
$$\mathcal{E}^{-}_{\hat{Q},\alpha}\mathcal{E}^{+}_{\hat{Q},i}-\mathcal{E}^{+}_{\hat{Q},i}\mathcal{E}^{-}_{\hat{Q},\alpha}=
  \frac{|V_i|}{a_{\alpha}}\sum_{\beta\in\mathcal{P}}a_{\beta}\mathcal{E}^{-}_{\hat{Q},\beta}
  (v^{-\langle{\beta,i}\rangle}g_{i\beta}^{\alpha}\mathcal{K}_{\hat{Q},i}-v^{\langle{\beta,i}\rangle}g_{\beta i}^{\alpha}\mathcal{K}^{-1}_{\hat{Q},i}).$$
\end{proposition}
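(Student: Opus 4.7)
The plan is to prove the identity by reducing to the case where $i$ is a source of $\hat{Q}$, then performing a direct geometric computation via base change on the open subvariety ${^iE}$, and finally matching the result with the right-hand side via a Hall-number count. First, I would reduce to the case where $i$ is a source. Proposition \ref{proposition_fourier_e} shows that $\Phi_{\hat{Q},{^i\hat{Q}}}$ intertwines $\mathcal{E}^{+}_{\hat{Q},i}$ with $\mathcal{E}^{+}_{{^i\hat{Q}},i}$, Proposition \ref{proposition_zheng_K_1} gives the analog for $\mathcal{K}^{\pm 1}_{\hat{Q},i}$, and Propositions \ref{proposition_relation_multi} and \ref{proposition_fourier_multi-1} together show that conjugating $\mathcal{E}^{-}_{\hat{Q},\alpha}$ by $\Phi$ is the same as convolving with $\Phi_{Q,{^iQ}}(\mathbf{1}_\alpha)=\sum_\gamma a_\gamma \mathbf{1}_\gamma$, exactly as in the proof of Proposition \ref{proposition_N_f}; since both sides of the identity are linear in $\mathbf{1}_\alpha$, it suffices to verify the source-case identity for each $\gamma$ separately. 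Then, using the commutative diagrams (\ref{cd_1}), (\ref{cd_2}), (\ref{cd_3}), after passing to the quotient by $\mathcal{N}_{\nu,\hat{Q}}$ we may restrict further to functions pulled back from ${^iE}_{\nu+i,\hat{Q}}$ via $j^\ast$, replacing each operator by its ``$^i$''-version.

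With $i$ fixed as a source, I would compute the two compositions as pullback--pushforward along the composed correspondences built from ${^iZ}_{\hat{Q}}$ (for $\mathcal{E}^{+}_{\hat{Q},i}$) and ${^iZ}_{\hat{Q},\alpha}$ (for $\mathcal{E}^{-}_{\hat{Q},\alpha}$). By base change the difference $\mathcal{E}^{-}_{\hat{Q},\alpha}\mathcal{E}^{+}_{\hat{Q},i}-\mathcal{E}^{+}_{\hat{Q},i}\mathcal{E}^{-}_{\hat{Q},\alpha}$ becomes a pushforward along the fibre product, and this fibre product decomposes into strata indexed by how the injected copy of $V_i$ at the source vertex interacts with the chosen subrepresentation isomorphic to $V_\alpha$: one stratum corresponds to the image of $V_i$ lying inside that subrepresentation, producing a short exact sequence $0\to V_\beta\to V_\alpha\to V_i\to 0$ and contributing the $g^{\alpha}_{i\beta}$ term weighted by $v^{-\langle\beta,i\rangle}\mathcal{K}_{\hat{Q},i}$, and the other corresponds to $V_i$ generating the quotient modulo that subrepresentation, producing $0\to V_i\to V_\alpha\to V_\beta\to 0$ and contributing the $g^{\alpha}_{\beta i}$ term weighted by $v^{\langle\beta,i\rangle}\mathcal{K}^{-1}_{\hat{Q},i}$. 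Counting points on each stratum, cancelling the $|G_\nu|$-factors, and recollecting via Proposition \ref{proposition_relation_multi} yields exactly the right-hand side.

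The main obstacle, as in Xiao's derivation of Proposition \ref{proposition_xiao-yang}, is the bookkeeping in the fibre-product decomposition: individually, both composed correspondences contribute large sums with many common terms, and the $v$-weights coming from $\langle\alpha,\nu\rangle$, $\dim V_\alpha$, $m_{\alpha,\nu}$ and the normalizations $v^{-n\nu_i}$, $\mathcal{K}_{\hat{Q},i}=v^{\bar{\nu}_i-\nu_i}$ must all conspire to leave behind only the extensions of $V_i$ by $V_\beta$ (and vice versa). The most efficient route, I expect, is to imitate Xiao's Hall-algebra computation for Proposition \ref{proposition_xiao-yang}(2) on the function side, using Proposition \ref{proposition_relation_multi} to translate the convolution-based identity into the correspondence formalism, so that the commutator relation in $\hat{D}_q(Q)$ transports essentially automatically to $\mathcal{F}_{\hat{Q}}$.
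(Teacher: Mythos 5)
Your overall strategy (reduce to the case where $i$ is a source via Fourier transform, then compute the commutator by comparing the two composed correspondences over a common fibre product) is the same as the paper's, but there is a genuine gap in the geometric step: your claim that the two right-hand-side terms both arise as strata of the fibre product does not survive the restriction to the open locus ${^iE}$. When $i$ is a source of $\hat{Q}$, the simple $S_i$ is injective, so a subrepresentation of $V_\alpha$ isomorphic to $V_i$ exists only if $V_i$ is a direct summand of $V_\alpha$; consequently the stratum you describe as ``$V_i$ generating the quotient'' (the paper's $Y_{21}$) is \emph{empty} whenever $V_i$ is not a summand of $V_\alpha$, and in that case the whole $g^{\alpha}_{\beta i}\mathcal{K}^{-1}_{\hat{Q},i}$ term is identically zero. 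The paper therefore splits the argument: Lemma \ref{lemma_6_1} handles the non-summand case purely geometrically (only the $g^{\alpha}_{i\beta}\mathcal{K}_{\hat{Q},i}$ term appears), and the general case is recovered \emph{algebraically} by writing $V_\alpha\cong kV_i\oplus V_{\alpha'}$, using relation (5) to factor $\mathcal{E}^{-}_{\hat{Q},\alpha}=v^{-\langle\alpha',ki\rangle}\mathcal{E}^{-}_{\hat{Q},\alpha'}\mathcal{E}^{-}_{\hat{Q},ki}$, and invoking Zheng's rank-one commutator (Lemma \ref{lemma_9}) for $\mathcal{E}^{-}_{\hat{Q},ki}$. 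Your outline supplies no mechanism at all for producing the $\mathcal{K}^{-1}$ term, and a direct stratified computation in the summand case would require an additional fibre count for $Y_{21}$ that you have not provided.

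Two further points you dismiss as bookkeeping are in fact the technical core. First, the ``common terms'' of the two compositions do not cancel by base change alone: the two fibre products $Y_{12}$ and $Y_{22}$ live over different middle spaces ($E_{\mu,\hat Q}$ versus $E_{\mu',\hat Q}$) and carry the normalizations $|G_\mu|^{-1}$ and $|G_{\mu'}|^{-1}$ respectively; one needs the explicit pullback--pushout bijection $Y_{12}/G_\mu\cong Y_{22}/G_{\mu'}$ of Lemma \ref{lemma_10}, together with freeness of the two group actions, to see that these contributions agree. Second, ``counting points on each stratum'' is a Green-formula computation (Lemma \ref{lemma_8}) yielding the fibre cardinality $v^{-2\langle i,\nu\rangle}|G_\mu|\,g^{\alpha}_{i\beta}\,a_\beta/a_\alpha$, which is exactly what matches the coefficient $\frac{|V_i|}{a_\alpha}a_\beta v^{-\langle\beta,i\rangle}g^{\alpha}_{i\beta}$ after combining with the various powers of $v$. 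Without these two lemmas, and without the case division on whether $V_i$ is a summand of $V_\alpha$, the proposal is an accurate picture of where the terms should come from but not a proof.
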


For the proof of Proposition \ref{lemma_6}, we need the following lemmas.

\begin{lemma}\label{lemma_6_1}
For any $i\in I$ such that $i$ is a source and $\alpha\in\mathcal{P}$ such that $V_i$ is not a direct summand of $V_{\alpha}$,
we have
$${^i\mathcal{E}}^{-}_{\hat{Q},\alpha}{^i\mathcal{E}}^{+}_{\hat{Q},i}-{^i\mathcal{E}}^{+}_{\hat{Q},i}{^i\mathcal{E}}^{-}_{\hat{Q},\alpha}=
  \frac{|V_i|}{a_{\alpha}}\sum_{\beta\in\mathcal{P}}a_{\beta}{^i\mathcal{E}}^{-}_{\hat{Q},\beta}
  (v^{-\langle{\beta,i}\rangle}g_{i\beta}^{\alpha}{^i\mathcal{K}}_{\hat{Q},i}-v^{\langle{\beta,i}\rangle}g_{\beta i}^{\alpha}{^i\mathcal{K}}^{-1}_{\hat{Q},i}).$$
\end{lemma}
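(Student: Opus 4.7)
The plan is to reduce the identity to a combinatorial computation on characteristic functions $\mathbf{1}_{\gamma}$ of $G_{\nu+i,I}$-orbits in ${^iE}_{\nu+i,\hat{Q}}$ (these span the relevant function space), and to exploit Proposition \ref{proposition_relation_multi} which expresses ${^i\mathcal{E}}^{-}_{\hat{Q},\alpha}$ as, up to an explicit scalar, left Hall multiplication by $\mathbf{1}_{\alpha}$. With this interpretation the commutator ${^i\mathcal{E}}^{-}_{\hat{Q},\alpha}{^i\mathcal{E}}^{+}_{\hat{Q},i}-{^i\mathcal{E}}^{+}_{\hat{Q},i}{^i\mathcal{E}}^{-}_{\hat{Q},\alpha}$ becomes a question about how the geometric operator ${^i\mathcal{E}}^{+}_{\hat{Q},i}$ fails to commute with Hall multiplication by $\mathbf{1}_{\alpha}$, which is an incarnation of the Xiao commutation formula (Proposition \ref{proposition_xiao-yang}) on the functional/geometric side.

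First I would unwind both sides via their defining correspondences. Evaluating at a fixed orbit representative $x_{\gamma'} \in {^iE}_{\nu+\alpha,\hat{Q}}$ and applied to $\mathbf{1}_{\gamma}$ (with $\gamma$ of dimension $\nu+i$), the composition ${^i\mathcal{E}}^{-}_{\hat{Q},\alpha}{^i\mathcal{E}}^{+}_{\hat{Q},i}(\mathbf{1}_{\gamma})(x_{\gamma'})$ becomes a weighted sum over flags of $\hat{I}$-graded subrepresentations of $V_{x_{\gamma'}}$ passing through an intermediate of dimension $\nu$, while ${^i\mathcal{E}}^{+}_{\hat{Q},i}{^i\mathcal{E}}^{-}_{\hat{Q},\alpha}(\mathbf{1}_{\gamma})(x_{\gamma'})$ becomes a sum over flags passing through an intermediate of dimension $\nu+i+\alpha$. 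After applying base change to the obvious fibered product diagrams, the two sums agree on the ``generic'' locus where the two subrepresentations can be chosen independently, so only the ``degenerate'' locus contributes to the difference.

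This residual difference is computed by an associativity/Green-type identity for Hall numbers applied to the pair $(V_{\alpha},V_{i})$. Because $i$ is a source, the $i$-component of any subrepresentation sits in an injective map at $i$, and because $V_{i}$ is not a direct summand of $V_{\alpha}$, the non-generic strata split cleanly into contributions from short exact sequences
\begin{equation*}
0 \to V_{\beta} \to V_{\alpha} \to V_{i} \to 0 \quad \text{and} \quad 0 \to V_{i} \to V_{\alpha} \to V_{\beta} \to 0,
\end{equation*}
contributing $g^{\alpha}_{\beta i}$ and $g^{\alpha}_{i\beta}$ respectively, with the extension carrying an induced $V_{\beta}$-subquotient of $V_{x_{\gamma'}}$. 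Collecting these contributions and grouping by the isoclass $\beta$ produces a sum of the form ${^i\mathcal{E}}^{-}_{\hat{Q},\beta}$ applied to $\mathbf{1}_{\gamma}$, with the two families of short exact sequences giving the two terms with ${^i\mathcal{K}}_{\hat{Q},i}$ and ${^i\mathcal{K}}^{-1}_{\hat{Q},i}$.

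The main obstacle is the scalar bookkeeping. The prefactors $(-v)^{\dim V_{\alpha}}$, $v^{\langle{\alpha,\nu}\rangle+\dim V_{\alpha}}$, $v^{-\nu_{i}}$ from the definitions of ${^i\mathcal{E}}^{-}_{\hat{Q},\alpha}$ and ${^i\mathcal{E}}^{+}_{\hat{Q},i}$, the group-order ratios $|G_{\nu}|^{-1}|G_{\nu+i}|^{-1}$ versus $|G_{\nu+i+\alpha}|^{-1}|G_{\nu+i}|^{-1}$, together with the $v^{\bar{\nu}_{i}-\nu_{i}}$ factors packaged in ${^i\mathcal{K}}^{\pm 1}_{\hat{Q},i}$ and the dimension shift in $\mathrm{End}(V_{\alpha})$ versus $\mathrm{End}(V_{\beta})$ (which contributes the ratio $a_{\beta}/a_{\alpha}$), must collapse to the exact coefficient $\frac{|V_{i}|}{a_{\alpha}}\, a_{\beta}\, v^{\pm\langle{\beta,i}\rangle}$. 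Verifying this cancellation is the core of the proof and parallels the Hopf-pairing computation behind the Drinfeld double construction in \cite{xiao1997drinfeld}; the source assumption on $i$ is what guarantees the relevant fibered products are transverse, so no excess-intersection corrections appear.
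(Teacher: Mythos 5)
Your overall strategy is the one the paper follows: realize both compositions by fibered-product correspondences, observe that the ``independent/generic'' strata of the two fibered products match up and cancel in the commutator, and compute the residual ``degenerate'' stratum by a Green-formula count of short exact sequences. But the proposal stops exactly where the proof actually lives, and the two steps you defer are genuine gaps. First, the cancellation of the generic loci is not automatic: the two compositions pass through intermediate graded spaces of \emph{different} dimension vectors ($\nu+\alpha$ versus $\nu-i$ in the paper's indexing), so ``the two sums agree where the subrepresentations can be chosen independently'' requires an explicit bijection between the quotients of the two strata by the different groups $G_{\mu}$ and $G_{\mu'}$; the paper constructs it by pullback/pushout of the square formed by $y$ and $y'$ (its Lemma \ref{lemma_10}), and this is also what reconciles the mismatched group-order normalizations. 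Second, the coefficient $\frac{|V_i|a_{\beta}}{a_{\alpha}}v^{-\langle\beta,i\rangle}g^{\alpha}_{i\beta}$ is extracted by computing the cardinality of the fibers of the projection from the degenerate stratum onto ${^iZ}_{\hat{Q},\beta}$; this is a genuine Green-formula argument (the paper's Lemma \ref{lemma_8}) producing $v^{-2\langle i,\nu\rangle}|G_{\mu}|g^{\alpha}_{i\beta}a_{\beta}/a_{\alpha}$, with the factor $v^{-2\langle i,\nu\rangle}=|\mathrm{Ext}(K_2,L_1)|/|\mathrm{Hom}(K_2,L_1)|$ coming from counting extensions, not merely from ``dimension shifts in $\mathrm{End}$''. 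You name this as the main obstacle but do not carry it out, so the proof is incomplete precisely at its core.

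There is also a structural point you miss. Under the lemma's hypotheses ($i$ a source of $Q$ and $V_i$ not a direct summand of $V_{\alpha}$), every subrepresentation of $V_{\alpha}$ isomorphic to $S_i$ would split off as a direct summand, so $g^{\alpha}_{\beta i}=0$ for all $\beta$ and the entire ${^i\mathcal{K}}^{-1}_{\hat{Q},i}$ term on the right-hand side vanishes identically. Correspondingly, only \emph{one} of the two degenerate strata is nonempty (the paper shows $Y_{21}=\emptyset$ using exactly these hypotheses), and the single surviving stratum yields only the $g^{\alpha}_{i\beta}{^i\mathcal{K}}_{\hat{Q},i}$ term. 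Your picture of ``two families of short exact sequences giving the two terms'' describes the general relation (6) of Theorem \ref{MT0}, which is recovered only afterwards by combining this lemma with Zheng's $\alpha=i$ case and the factorization ${\mathcal{E}}^{-}_{\hat{Q},\alpha}=v^{-\langle\alpha',ki\rangle}{\mathcal{E}}^{-}_{\hat{Q},\alpha'}{\mathcal{E}}^{-}_{\hat{Q},ki}$; it is not what happens inside this lemma. (You also have the two Hall numbers swapped relative to the paper's convention: the sequence $0\to V_{\beta}\to V_{\alpha}\to V_{i}\to 0$ is counted by $g^{\alpha}_{i\beta}$, not $g^{\alpha}_{\beta i}$.)
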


\begin{proof}
Since $i$ is a source and $\alpha\in\mathcal{P}$ such that $V_i$ is not a direct summand of $V_{\alpha}$, we have
$$-\frac{|V_i|}{a_{\alpha}}\sum_{\beta\in\mathcal{P}}a_{\beta}{^i\mathcal{E}}^{-}_{\hat{Q},\beta}
  v^{\langle{\beta,i}\rangle}g_{\beta i}^{\alpha}{^i\mathcal{K}}^{-1}_{\hat{Q},i}=0.$$
Hence, it is enough to prove
\begin{equation}\label{f_3}
{^i\mathcal{E}}^{-}_{\hat{Q},\alpha}{^i\mathcal{E}}^{+}_{\hat{Q},i}-{^i\mathcal{E}}^{+}_{\hat{Q},i}{^i\mathcal{E}}^{-}_{\hat{Q},\alpha}=
  \frac{|V_i|}{a_{\alpha}}\sum_{\beta\in\mathcal{P}}a_{\beta}{^i\mathcal{E}}^{-}_{\hat{Q},\beta}
  v^{-\langle{\beta,i}\rangle}g_{i\beta}^{\alpha}{^i\mathcal{K}}_{\hat{Q},i}.
\end{equation}

Fix $\nu\in\mathbb{N}\hat{I}$ and let $\mu=\nu+\alpha$, $\mu'=\nu-i$ and $\nu'=\nu+\alpha-i$.
Fix $\hat{I}$-graded vector spaces $\mathbf{V}^{\nu}$, $\mathbf{V}^{\mu}$, $\mathbf{V}^{\nu'}$ and $\mathbf{V}^{\mu'}$ with $\underline{\dim}\mathbf{V}^{\nu}=\nu$, $\underline{\dim}\mathbf{V}^{\mu}=\mu$, $\underline{\dim}\mathbf{V}^{\nu'}=\nu'$ and $\underline{\dim}\mathbf{V}^{\mu'}=\mu'$ respectively.

Consider the following diagram
\begin{equation}\label{cd_4}
\xymatrix{
Y\ar[rr]^{\pi'}\ar[dd]_{\pi}&&{^i{E}}_{\nu',{\hat{Q}}}\\
&Y_1\ar[lu]_{r_1}\ar[r]^{\varpi_1'}\ar[d]_{\varpi_1}&{^i{Z}}_{\hat{Q}}\ar[u]_-{{^ip_2}}\ar[d]^-{{^ip_2'}}\\
{^i{E}}_{\nu,{\hat{Q}}}&{^i{Z}}_{\hat{Q},\alpha}\ar[l]_-{{^ip_1}}\ar[r]^-{{^ip_1'}}&{^i{E}}_{\mu,{\hat{Q}}}
}\end{equation}
where
\begin{enumerate}
  \item[(1)]$Y={^i{E}}_{\nu,{\hat{Q}}}\times{^i{E}}_{\nu',{\hat{Q}}}=\{(x,x')\,\,|\,\,x\in{^i{E}}_{\nu,{\hat{Q}}},x'\in{^i{E}}_{\nu',{\hat{Q}}}\}$,
  \item[(2)]$\pi(x,x')=x$ and $\pi'(x,x')=x'$,
  \item[(3)]$Y_1={^i{Z}}_{\hat{Q},\alpha}\times_{{^i{E}}_{\mu,{\hat{Q}}}}{^i{Z}}_{\hat{Q}}$, that is
      $$Y_1=\{(x,x_1,x',y,y')\,\,|\,\,(x,x_1,y)\in{^i{Z}}_{\hat{Q},\alpha},(x_1,x',y')\in{^i{Z}}_{\hat{Q}}\},$$
  \item[(4)]$\varpi_1(x,x_1,x',y,y')=(x,x_1,y)$ and $\varpi'_1(x,x_1,x',y,y')=(x_1,x',y')$,
  \item[(5)]$r_1(x,x_1,x',y,y')=(x,x')$,
  \item[(6)]${^ip_1}(x,x_1,y)=x$ and ${^ip'_1}(x,x_1,y)=x_1$,
  \item[(7)]${^ip_2}(x_1,x',y')=x'$ and ${^ip'_2}(x_1,x',y')=x_1$.
\end{enumerate}

By definition,
\begin{eqnarray*}
{^i\mathcal{E}}^{+}_{\hat{Q},i}{^i\mathcal{E}}^{-}_{\hat{Q},\alpha}&=&(-v)^{\textrm{dim}V_{\alpha}}v^{\langle{\alpha,\nu}\rangle+\textrm{dim}V_{\alpha}}v^{-\nu'_i}|G_{\mu}|^{-1}|G_{\nu}|^{-1}(^ip_2)_{!}(^ip'_2)^\ast(^ip'_1)_{!}(^ip_1)^\ast\\
&=&(-v)^{\textrm{dim}V_{\alpha}}v^{\langle{\alpha,\nu}\rangle+\textrm{dim}V_{\alpha}-\nu'_i}|G_{\mu}|^{-1}|G_{\nu}|^{-1}(^ip_2)_{!}(\varpi'_1)_{!}(\varpi_1)^\ast(^ip_1)^\ast\\
&=&(-v)^{\textrm{dim}V_{\alpha}}v^{\langle{\alpha,\nu}\rangle+\textrm{dim}V_{\alpha}-\nu'_i}|G_{\mu}|^{-1}|G_{\nu}|^{-1}(\pi')_{!}(r_1)_{!}(r_1)^\ast(\pi)^\ast.
\end{eqnarray*}

Consider the following decomposition of $Y_1=Y_{11}\bigsqcup Y_{12}$, where
$$Y_{11}=\{(x,x_1,x',y,y')\in Y_1\,\,|\,\,\textrm{Im$y\subset$Im$y'$}\}$$
and $$Y_{12}=\{(x,x_1,x',y,y')\in Y_1\,\,|\,\,\textrm{Im$y\not\subset$Im$y'$}\}.$$
Denote by $r_{11}:Y_{11}\rightarrow Y$ and $r_{12}:Y_{12}\rightarrow Y$ the restrictions of $r_1$. And we have
\begin{eqnarray}\label{f_4}
{^i\mathcal{E}}^{+}_{\hat{Q},i}{^i\mathcal{E}}^{-}_{\hat{Q},\alpha}
&=&(-v)^{\textrm{dim}V_{\alpha}}v^{\langle{\alpha,\nu}\rangle+\textrm{dim}V_{\alpha}-\nu'_i}|G_{\mu}|^{-1}|G_{\nu}|^{-1}(\pi')_{!}(r_1)_{!}(r_1)^\ast(\pi)^\ast\\
&=&(-v)^{\textrm{dim}V_{\alpha}}v^{\langle{\alpha,\nu}\rangle+\textrm{dim}V_{\alpha}-\nu'_i}|G_{\mu}|^{-1}|G_{\nu}|^{-1}(\pi')_{!}(r_{11})_{!}(r_{11})^\ast(\pi)^\ast\nonumber\\
&&+(-v)^{\textrm{dim}V_{\alpha}}v^{\langle{\alpha,\nu}\rangle+\textrm{dim}V_{\alpha}-\nu'_i}|G_{\mu}|^{-1}|G_{\nu}|^{-1}(\pi')_{!}(r_{12})_{!}(r_{12})^\ast(\pi)^\ast.\nonumber
\end{eqnarray}

Similarly, consider the following diagram
\begin{equation}\label{cd_5}
\xymatrix{
Y\ar[rr]^{\pi'}\ar[dd]_{\pi}&&{^i{E}}_{\nu',{\hat{Q}}}\\
&Y_2\ar[lu]_{r_2}\ar[r]^{\varpi'_2}\ar[d]_{\varpi_2}&{^i{Z}}_{\hat{Q},\alpha}\ar[u]_-{^ip_4'}\ar[d]^-{^ip_4}\\
{^i{E}}_{\nu,{\hat{Q}}}&{^i{Z}}_{\hat{Q}}\ar[l]_-{^ip_3'}\ar[r]^-{^ip_3}&{^i{E}}_{\mu',{\hat{Q}}}
}\end{equation}
where
\begin{enumerate}
   \item[(1)]$Y_2={^i{Z}}_{\hat{Q}}\times_{{^i{E}}_{\mu',{\hat{Q}}}}{^i{Z}}_{\hat{Q},\alpha}$, that is $$Y_2=\{(x,x_1,x',y,y')\,\,|\,\,(x,x_1,y)\in{^i{Z}}_{\hat{Q}},(x_1,x',y')\in{^i{Z}}_{\hat{Q},\alpha}\},$$
   \item[(2)]$\varpi_2(x,x_1,x',y,y')=(x,x_1,y)$ and $\varpi'_2(x,x_1,x',y,y')=(x_1,x',y')$,
   \item[(3)]$r_2(x,x_1,x',y,y')=(x,x')$,
   \item[(4)]${^ip'_3}(x,x_1,y)=x$ and ${^ip_3}(x,x_1,y)=x_1$,
   \item[(5)]${^ip'_4}(x_1,x',y')=x'$ and ${^ip_4}(x_1,x',y')=x_1$.
\end{enumerate}

By definition,
\begin{eqnarray}\label{f_5}
\mathcal{E}^{-}_{\hat{Q},\alpha}\mathcal{E}^{+}_{\hat{Q},i}&=&(-v)^{\textrm{dim}V_{\alpha}}v^{\langle{\alpha,\mu'}\rangle+\textrm{dim}V_{\alpha}}v^{-\mu'_i}|G_{\nu}|^{-1}|G_{\mu'}|^{-1}(^ip'_4)_{!}(^ip_4)^\ast(^ip_3)_{!}(^ip'_3)^\ast\\
&=&(-v)^{\textrm{dim}V_{\alpha}}v^{\langle{\alpha,\mu'}\rangle+\textrm{dim}V_{\alpha}-\mu'_i}|G_{\nu}|^{-1}|G_{\mu'}|^{-1}(^ip'_4)_{!}(\varpi'_2)_{!}(\varpi_2)^\ast(^ip'_3)^\ast\nonumber\\
&=&(-v)^{\textrm{dim}V_{\alpha}}v^{\langle{\alpha,\mu'}\rangle+\textrm{dim}V_{\alpha}-\mu'_i}|G_{\nu}|^{-1}|G_{\mu'}|^{-1}(\pi')_{!}(r_2)_{!}(r_2)^\ast(\pi)^\ast.\nonumber
\end{eqnarray}

For any $(x,x_1,x',y,y')\in Y_2$, we have the following diagram
$$
\xymatrix{
V^{\nu}_i\ar[r]^{x^i}&\hat{V}^{\nu}_i\\
V^{\mu'}_i\ar[r]^{x_1^i}\ar[u]_{y_i}\ar[d]^{y'_i}&\hat{V}^{\mu'}_i\ar[u]_{y^i}\ar[d]^{{y'}^i}\\
V^{\nu'}_i\ar[r]^{{x'}^i}&\hat{V}^{\nu'}_i\\
}$$
where $\hat{V}_i=\oplus_{h\in\hat{H},s(h)=i}V_{t(h)}$, $y^i=\oplus_{h\in\hat{H},s(h)=i}y_{t(h)}$ and $x^i=\oplus_{h\in\hat{H},s(h)=i}x_{h}$.
Note that $y^i$ is an isomorphism.

Let $Y_{21}$ be the subset of $Y_2$ such that $$\textrm{Im${y'}^i(y^i)^{-1}x^i\subset$Im${x'}^i$}$$ and $Y_{22}=Y_2\backslash Y_{21}$. This condition implies that there is an embedding $\iota$ from  $\mathbf{V}^{\nu}$ to $\mathbf{V}^{\nu'}$ such that the following diagram is commutative
for any $(x,x_1,x',y,y')\in Y_{21}$
$$
\xymatrix{
V^{\mu'}_i\ar[r]^{x_1^i}\ar@(dl,ul)[dd]_{y'_i}\ar[d]^{y_i}&\hat{V}^{\mu'}_i\ar@(dr,ur)[dd]^{{y'}^i}\ar[d]^{{y}^i}\\
V^{\nu}_i\ar[r]^{x^i}\ar[d]^{{\iota}_i}&\hat{V}^{\nu}_i\ar[d]^{{\iota}^i}\\
V^{\nu'}_i\ar[r]^{{x'}^i}&\hat{V}^{\nu'}_i
.}$$
Since $i$ is a source and $\alpha\in\mathcal{P}$ such that $V_i$ is not a direct summand of $V_{\alpha}$, the set
$Y_{21}$ is empty and $Y_{2}=Y_{22}$.

Since $G_{\mu}$ (resp. $G_{\mu'}$) acts on $Y_{12}$ (resp. $Y_{22}$) freely and there is a bijection between $Y_{12}/G_{\mu}$ and $Y_{22}/G_{\mu'}$ by Lemma \ref{lemma_10}, we have $$|G_{\mu}|^{-1}(\pi')_{!}(r_{12})_{!}(r_{12})^\ast(\pi)^\ast=|G_{\mu'}|^{-1}(\pi')_{!}(r_{2})_{!}(r_{2})^\ast(\pi)^\ast.$$
Note that
$$v^{\langle{\alpha,\nu}\rangle+\textrm{dim}V_{\alpha}-\nu'_i}=v^{\langle{\alpha,\mu'}\rangle+\textrm{dim}V_{\alpha}-\mu'_i}.$$
We have
\begin{eqnarray}\label{f_6}
&&v^{\langle{\alpha,\nu}\rangle+\textrm{dim}V_{\alpha}-\nu'_i}|G_{\mu}|^{-1}|G_{\nu}|^{-1}(\pi')_{!}(r_{12})_{!}(r_{12})^\ast(\pi)^\ast\\
&=&v^{\langle{\alpha,\mu'}\rangle+\textrm{dim}V_{\alpha}-\mu'_i}|G_{\nu}|^{-1}|G_{\mu'}|^{-1}(\pi')_{!}(r_{2})_{!}(r_{2})^\ast(\pi)^\ast.\nonumber
\end{eqnarray}

Formula (\ref{f_4}),  (\ref{f_5}) and (\ref{f_6}) imply that
$${^i\mathcal{E}}^{-}_{\hat{Q},\alpha}{^i\mathcal{E}}^{+}_{\hat{Q},i}-{^i\mathcal{E}}^{+}_{\hat{Q},i}{^i\mathcal{E}}^{-}_{\hat{Q},\alpha}=-(-v)^{\textrm{dim}V_{\alpha}}v^{\langle{\alpha,\nu}\rangle+\textrm{dim}V_{\alpha}-\nu'_i}|G_{\mu}|^{-1}|G_{\nu}|^{-1}(\pi')_{!}(r_{11})_{!}(r_{11})^\ast(\pi)^\ast.$$
Hence, for the proof of Formula (\ref{f_3}), it is enough to prove
\begin{eqnarray}\label{f_7}
&&(-v)^{\textrm{dim}V_{\alpha}}v^{\langle{\alpha,\nu}\rangle+\textrm{dim}V_{\alpha}-\nu'_i}|G_{\mu}|^{-1}|G_{\nu}|^{-1}(\pi')_{!}(r_{11})_{!}(r_{11})^\ast(\pi)^\ast\\
&=&-\frac{|V_i|}{a_{\alpha}}\sum_{\beta\in\mathcal{P}}a_{\beta}\mathcal{E}^{-}_{\hat{Q},\beta}
  v^{-\langle{\beta,i}\rangle}g_{i\beta}^{\alpha}{^i\mathcal{K}}_{\hat{Q},i}.\nonumber
\end{eqnarray}

By the commutative diagram (\ref{cd_4}), we have
$$(\pi')_{!}(r_{11})_{!}(r_{11})^\ast(\pi)^\ast=(^ip_2)_{!}(\varpi'_{11})_{!}(\varpi_{11})^\ast(^ip_1)^\ast
,$$
where $\varpi'_{11}$ and $\varpi_{11}$ are the restrictions of $\varpi'_{1}$ and $\varpi_{1}$ to $Y_{11}$ respectively.

Consider the following decomposition of $Y_{11}=\bigsqcup_{\beta\in\mathcal{P}}Y_{11\beta}$, where
$$Y_{11\beta}=\{(x,x_1,x',y,y')\in Y_{11}\,\,|\,\,[\textrm{Im}y'/\textrm{Im}y]=\beta\}.$$
Denote by $\varpi'_{11\beta}$ and $\varpi_{11\beta}$ the restrictions of $\varpi'_{11}$ and $\varpi_{11}$ to $Y_{11\beta}$ respectively
and we have
$$(^ip_2)_{!}(\varpi'_{11})_{!}(\varpi_{11})^\ast(^ip_1)^\ast
=\sum_{\beta\in\mathcal{P}}(^ip_2)_{!}(\varpi'_{11\beta})_{!}(\varpi_{11\beta})^\ast(^ip_1)^\ast.$$

Thus, for the proof of Formula (\ref{f_7}), we just need to prove
\begin{eqnarray}\label{f_8}
&&(-v)^{\textrm{dim}V_{\alpha}}v^{\langle{\alpha,\nu}\rangle+\textrm{dim}V_{\alpha}-\nu'_i}|G_{\mu}|^{-1}|G_{\nu}|^{-1}(^ip_2)_{!}(\varpi'_{11\beta})_{!}(\varpi_{11\beta})^\ast(^ip_1)^\ast\\
&=&-\frac{|V_i|a_{\beta}}{a_{\alpha}}\mathcal{E}^{-}_{\hat{Q},\beta}
  v^{-\langle{\beta,i}\rangle}g_{i\beta}^{\alpha}{^i\mathcal{K}}_{\hat{Q},i}.\nonumber
  \end{eqnarray}

Denote by $\tau$ the canonical projection from $Y_{11\beta}$ to ${^iZ}_{\hat{Q},\beta}$.
By Lemma \ref{lemma_8}, the cardinality of the fiber of $\tau$ at $a\in{^iZ}_{\hat{Q},\beta}$ is $|G_{\mu}|g_{i\beta}^{\alpha}\frac{a_{\beta}}{a_{\alpha}}v^{-2\langle{i,\nu}\rangle}$.
At the same time, $$-|V_i|(-v)^{\textrm{dim}V_{\beta}}v^{\langle{\beta,\nu}\rangle+\textrm{dim}V_{\beta}}
  v^{-\langle{\beta,i}\rangle}v^{\bar{\nu}_i-\nu_i}v^{2\langle{i,\nu}\rangle}=(-v)^{\textrm{dim}V_{\alpha}}v^{\langle{\alpha,\nu}\rangle+\textrm{dim}V_{\alpha}-\nu'_i}.$$
Hence
\begin{eqnarray*}
&&-\frac{|V_i|a_{\beta}}{a_{\alpha}}\mathcal{E}^{-}_{\hat{Q},\beta}
  v^{-\langle{\beta,i}\rangle}g_{i\beta}^{\alpha}{^i\mathcal{K}}_{\hat{Q},i}\\
&=&-|V_i|(-v)^{\textrm{dim}V_{\beta}}v^{\langle{\beta,\nu}\rangle+\textrm{dim}V_{\beta}}|G_{\nu}|^{-1}
  v^{-\langle{\beta,i}\rangle}v^{\bar{\nu}_i-\nu_i}|G_{\mu}|^{-1}v^{2\langle{i,\nu}\rangle}\cdot\\
&&(^ip_2)_{!}(\varpi'_{11\beta})_{!}(\varpi_{11\beta})^\ast(^ip_1)^\ast\\
&=&(-v)^{\textrm{dim}V_{\alpha}}v^{\langle{\alpha,\nu}\rangle+\textrm{dim}V_{\alpha}-\nu'_i}|G_{\mu}|^{-1}|G_{\nu}|^{-1}(^ip_2)_{!}(\varpi'_{11\beta})_{!}(\varpi_{11\beta})^\ast(^ip_1)^\ast
\end{eqnarray*}
and we prove Formula (\ref{f_8}).
\end{proof}

\begin{lemma}\label{lemma_10}
With the notations in the proof of Lemma \ref{lemma_6_1}, there is a bijection between $Y_{12}/G_{\mu}$ and $Y_{22}/G_{\mu'}$.
\end{lemma}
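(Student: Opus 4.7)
The plan is to realize the two quotients $Y_{12}/G_\mu$ and $Y_{22}/G_{\mu'}$ as parametrizing the same combinatorial/representation-theoretic data, via the standard pushout-pullback duality: a point of $Y_{12}$ records a representation $\mathbf{V}^\mu$ together with two subrepresentations $\mathrm{Im}\,y\cong \mathbf{V}^\nu$ and $\mathrm{Im}\,y'\cong \mathbf{V}^{\nu'}$, while a point of $Y_{22}$ records two representations $\mathbf{V}^\nu,\mathbf{V}^{\nu'}$ together with a common subrepresentation $\mathrm{Im}\,\bar y=\mathrm{Im}\,\bar y'\cong \mathbf{V}^{\mu'}$. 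The bijection will send $(x,x_1,x',y,y')\in Y_{12}$ to the data obtained by intersecting the two subspaces, and conversely will send $(x,\bar x_1,x',\bar y,\bar y')\in Y_{22}$ to the pushout of the diagram $\mathbf{V}^\nu\xleftarrow{\bar y}\mathbf{V}^{\mu'}\xrightarrow{\bar y'}\mathbf{V}^{\nu'}$.

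First I would verify, for $(x,x_1,x',y,y')\in Y_{12}$, that $\mathrm{Im}\,y+\mathrm{Im}\,y'=\mathbf{V}^\mu$. Indeed $\mathrm{Im}\,y'$ has codimension one, concentrated at vertex $i$ (since $\underline{\dim}\,\mathbf{V}^\mu-\underline{\dim}\,\mathbf{V}^{\nu'}=i$), and the hypothesis $\mathrm{Im}\,y\not\subset \mathrm{Im}\,y'$ forces the sum to be the whole space. Consequently $N:=\mathrm{Im}\,y\cap\mathrm{Im}\,y'$ is an $x_1$-stable subspace of dimension $\nu+\nu'-\mu=\mu'$. After choosing any isomorphism $N\cong\mathbf{V}^{\mu'}$, the restriction of $x_1$ gives $\bar x_1\in E_{\mu',\hat Q}$, and the inclusions $N\hookrightarrow\mathrm{Im}\,y$ and $N\hookrightarrow\mathrm{Im}\,y'$ pulled back along $y$ and $y'$ yield $\bar y:\mathbf{V}^{\mu'}\to\mathbf{V}^\nu$ and $\bar y':\mathbf{V}^{\mu'}\to\mathbf{V}^{\nu'}$, producing a tuple $(x,\bar x_1,x',\bar y,\bar y')\in Z_{\hat Q}\times_{E_{\mu',\hat Q}}Z_{\hat Q,\alpha}$. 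Because $i$ is a source and $x_1\in {^iE}_{\mu,\hat Q}$, the injectivity of $\bigoplus_{s(h)=i}(x_1)_h$ at $V_i^\mu$ restricts to injectivity of $\bigoplus_{s(h)=i}(\bar x_1)_h$ at $V_i^{\mu'}$, so $\bar x_1\in {^iE}_{\mu',\hat Q}$ and the image lies in $Y_2$. Different choices of isomorphism $N\cong \mathbf{V}^{\mu'}$ differ by $G_{\mu'}$, and the $G_\mu$-action on $Y_{12}$ simply transports $\mathrm{Im}\,y,\mathrm{Im}\,y'$ by an automorphism of $\mathbf{V}^\mu$ (leaving $N$ up to $G_{\mu'}$-action unchanged), so the construction descends to a well-defined map $\Psi:Y_{12}/G_\mu\to Y_2/G_{\mu'}$. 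A short check shows the image lands in $Y_{22}$: if instead it landed in $Y_{21}$, then the embedding $\iota:\mathbf{V}^\nu\to\mathbf{V}^{\nu'}$ provided by the $Y_{21}$-condition would, when transported back along $y$ and $y'$, force $\mathrm{Im}\,y\subset\mathrm{Im}\,y'$, contradicting the definition of $Y_{12}$.

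For the inverse, given $(x,\bar x_1,x',\bar y,\bar y')\in Y_{22}$ I would form the pushout of $\mathbf{V}^\nu\xleftarrow{\bar y}\mathbf{V}^{\mu'}\xrightarrow{\bar y'}\mathbf{V}^{\nu'}$ in $\mathrm{rep}_k\hat Q$; since $\bar y,\bar y'$ are injective, the pushout has dimension vector $\nu+\nu'-\mu'=\mu$, and both $\mathbf{V}^\nu,\mathbf{V}^{\nu'}$ embed into it as subrepresentations whose intersection is exactly the image of $\mathbf{V}^{\mu'}$. Choosing any isomorphism of the pushout with $\mathbf{V}^\mu$ produces $(x,x_1,x',y,y')$ in $Y_1$, well defined up to $G_\mu$. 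The condition $\mathrm{Im}\,y\not\subset\mathrm{Im}\,y'$ follows from the $Y_{22}$ condition (if the inclusion held, the intersection would equal $\mathrm{Im}\,y$, forcing $\nu=\mu'$, which is false). The two constructions are mutually inverse because pushout and pullback of a diagram of monomorphisms whose image-sum equals the ambient space reproduce each other. The main obstacle I anticipate is bookkeeping around the vertex $i$: one must carefully check that the injectivity-at-source condition defining ${^iE}_{\bullet,\hat Q}$ is preserved under both the intersection and pushout constructions, and that the assumption $V_i$ is not a direct summand of $V_\alpha$ (implicit via the source condition on $i$) really suffices to rule out the $Y_{21}$ locus on the target side.
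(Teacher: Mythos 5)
Your construction coincides with the paper's: the forward map is the pullback (the intersection $\mathrm{Im}\,y\cap\mathrm{Im}\,y'$ inside $\mathbf{V}^{\mu}$, of dimension vector $\mu'$ precisely because $\mathrm{Im}\,y\not\subset\mathrm{Im}\,y'$), the inverse is the pushout of $\mathbf{V}^{\nu}\leftarrow\mathbf{V}^{\mu'}\rightarrow\mathbf{V}^{\nu'}$, and both descend to the quotients by $G_{\mu}$ and $G_{\mu'}$ because the constructions are canonical up to the choice of identification with the fixed graded spaces. The only step you defer --- that the pushout satisfies the injectivity-at-$i$ condition, which is exactly where the ``not in $Y_{21}$'' condition is used --- is treated just as briefly in the paper, so your proposal is essentially the paper's proof.
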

\begin{proof}
For any $(x,x_1,x',y,y')\in Y_{12}$, consider the following pull-back
$$
\xymatrix{
(\hat{x}_1,\mathbf{W})\ar[r]^{\hat{y}'}\ar[d]_{\hat{y}}&(x',\mathbf{V}^{\nu'})\ar[d]_{y'}\\
(x,\mathbf{V}^{\nu})\ar[r]^{y}&(x_1,\mathbf{V}^{\mu}).\\
}$$
By the definition of $Y_{12}$, $\underline{\dim}\mathbf{W}=\mu'$.
Fix a linear isomorphism $\sigma:\mathbf{V}^{\mu'}\rightarrow\mathbf{W}$. By the definition of $Y_{22}$, the element $(x,\sigma^{-1}\hat{x}_1\sigma,x',y\sigma,y'\sigma)\in Y_{22}$.
This induces a map $\theta$ from $Y_{12}$ to $Y_{22}/G_{\mu'}$.
For two elements $(x,x_1,x',y,y')$ and $(x,x_1,\sigma x'\sigma^{-1},\sigma y,\sigma y')\in Y_{12}$ with $\sigma\in G_{\mu}$,
$\theta(x,x_1,x',y,y')=\theta(x,x_1,\sigma x'\sigma^{-1},\sigma y,\sigma y')$. Hence, the map $\theta$ induces a map $\bar{\theta}$ from $Y_{12}/G_{\mu}$ to $Y_{22}/G_{\mu'}$.

For any $(x,x_1,x',y,y')\in Y_{22}$, consider the following push-out
$$
\xymatrix{
({x}_1,\mathbf{V}^{\mu'})\ar[r]^{{y}'}\ar[d]_{{y}}&(x',\mathbf{V}^{\nu'})\ar[d]_{\hat{y}'}\\
(x,\mathbf{V}^{\nu})\ar[r]^{\hat{y}}&(\hat{x}_1,\mathbf{U}).\\
}$$
By the definition of $Y_{22}$, $\underline{\dim}\mathbf{U}=\mu$.
Similarly to $\bar{\theta}$, we have a map $\bar{\vartheta}$ from $Y_{22}/G_{\mu'}$ to $Y_{12}/G_{\mu}$.

By the relation between pull-back and push-out, we have $\bar{\vartheta}$ is the inverse of $\bar{\theta}$ and $\bar{\theta}$ is a bijection.
\end{proof}

\begin{lemma}\label{lemma_8}
With the notations in the proof of Lemma \ref{lemma_6_1}, the cardinality of any fiber of $\tau$ is $v^{-2\langle{i,\nu}\rangle}|G_{\mu}|g_{i\beta}^{\alpha}\frac{a_{\beta}}{a_{\alpha}}.$
\end{lemma}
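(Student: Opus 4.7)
My approach is to unwind the fiber of $\tau$ into a parameter space of three-step filtrations and count it using the classical Hall number $g^{\alpha}_{i\beta}$, orbit-stabilizer for the $G_{\mu}$-action, and an Euler-form dimension count for the prefactor.

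First, I would identify $\tau^{-1}(a)$ explicitly. For $a = (x, x', \tilde y) \in {}^{i}Z_{\hat Q,\beta}$, where $\tilde y: \mathbf V^{\nu} \hookrightarrow \mathbf V^{\nu'}$ is the unique factorization of $y$ through $y'$ (which exists and is unique because $\mathrm{Im}\,y \subset \mathrm{Im}\,y'$ and $y'$ is injective), unwinding the definitions of $Y_{11\beta}$ and $\tau$ shows that $\tau^{-1}(a)$ consists of pairs $(x_1, y')$, where $x_1 \in {}^{i}E_{\mu,\hat Q}$ and $y': \mathbf V^{\nu'} \hookrightarrow \mathbf V^{\mu}$ is an injective intertwiner $V^{x'} \hookrightarrow V^{x_1}$ with $V^{x_1}/\mathrm{Im}(y') \simeq V_i$ and $V^{x_1}/\mathrm{Im}(y'\tilde y) \simeq V_\alpha$. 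Equivalently, this is the set of filtrations $V^{x} \subset V^{x'} \subset V^{x_1}$ on subspaces of $\mathbf V^{\mu}$ with successive quotients of classes $\beta$ and $i$, with the bottom two pieces prescribed by $a$.

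Next, I would group this set by the isomorphism class of the abstract representation $V^{x_1}$ and apply orbit-stabilizer for the $G_{\mu}$-action on $\mathbf V^{\mu}$. The abstract datum carried by the top of the filtration is a short exact sequence $0 \to V_{\beta} \to V_{\alpha} \to V_i \to 0$ arising as $V^{x_1}/V^{x}$, and the number of $\beta$-subrepresentations inside $V_{\alpha}$ with quotient $V_i$ is precisely $g^{\alpha}_{i\beta}$. Combining this Hall number with the orbit-stabilizer factor $|G_{\mu}|/a_{\alpha}$ (for lifting the iso class of $V^{x_1}$ to a labeled $x_1$ on $\mathbf V^{\mu}$) and with a factor $a_{\beta}$ (from identifying the abstract $V_{\beta}$ with $V^{x'}/V^{x}$ and parametrizing $y'$ on the middle piece) produces the combinatorial core $|G_{\mu}|\, g^{\alpha}_{i\beta}\, a_{\beta}/a_{\alpha}$ of the fiber count.

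Finally, I would extract the prefactor $v^{-2\langle i,\nu\rangle}$ from a dimension count over the intertwiner conditions that parametrize $y'$ relative to the source vertex $i$. Since $i$ is a source of $\hat Q$, the Euler form evaluates as $\langle i,\nu\rangle = \nu_i - \sum_{h \in \hat H,\, s(h)=i}\nu_{t(h)}$, and the number of linear degrees of freedom in extending the fixed inclusion $V^{x} \hookrightarrow V^{x_1}$ by a quotient of class $V_i$ along the outgoing arrows from $i$ should contribute a $q=v^2$-power equal to $-\langle i,\nu\rangle$. The main obstacle will be this final bookkeeping: the normalization factors built into the definitions of $\mathcal E^{+}_{\hat Q,i}$, $\mathcal E^{-}_{\hat Q,\alpha}$ and $\mathcal K_{\hat Q,i}$ must combine consistently with the orbit-stabilizer ratios and the $\mathrm{Hom}$-space dimensions so as to isolate exactly $v^{-2\langle i,\nu\rangle}$. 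This is a delicate but routine polynomial identity in the dimension data, once the filtration parametrization and the Hall-number reduction above are in hand.
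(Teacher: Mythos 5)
Your first step is right: $\tau^{-1}(a)$ is exactly the set of pairs $(x_1,y')$ completing the fixed chain $V_x\subset V_{x'}$ to a chain $V_x\subset V_{x'}\subset V_{x_1}$ on $\mathbf{V}^{\mu}$ with successive quotients of classes $\beta$ and $i$ and total quotient of class $\alpha$, and the three ingredients you name (a Hall number, automorphism factors, a $q$-power of homological origin) are the ones that occur. But the way you assemble them contains a genuine gap. The factor $|G_{\mu}|/a_{\alpha}$ is \emph{not} an orbit--stabilizer factor for lifting the isomorphism class of $V_{x_1}$ to a point of $E_{\mu,\hat{Q}}$: the stabilizer of $x_1$ in $G_{\mu}$ is $\mathrm{Aut}(V_{x_1})$, whose order varies with the class of $V_{x_1}$ and has nothing to do with $a_{\alpha}=|\mathrm{Aut}(V_{\alpha})|$. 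Grouping by the class $\lambda$ of $V_{x_1}$ produces a sum $\sum_{\lambda}(|G_{\mu}|/a_{\lambda})N_{\lambda}$ that your plan never evaluates; in the correct count the factors $a_{\beta}$ and $a_{\alpha}^{-1}$ come from automorphisms of the \emph{subquotients} (the choices of surjections onto $M\cong V_{\alpha}$ and $K\cong S_i$ and of the identification of $\mathrm{Im}\,y'/\mathrm{Im}\,y$ with $V_{\beta}$), not from the stabilizer of $x_1$. Moreover, the factor $v^{-2\langle i,\nu\rangle}$ is the essential content of the lemma and cannot be recovered from ``the normalization factors built into the definitions of $\mathcal{E}^{+}_{\hat{Q},i}$, $\mathcal{E}^{-}_{\hat{Q},\alpha}$ and $\mathcal{K}_{\hat{Q},i}$'': those normalizations play no role here, since the lemma is a pure point count independent of the operators. (Your heuristic for its source is nevertheless close to correct: it is $|\mathrm{Ext}^1(S_i,V_x)|/|\mathrm{Hom}(S_i,V_x)|=q^{-\langle i,\nu\rangle}$, and $\mathrm{Hom}(S_i,V_x)=0$ because $x\in{}^i{E}_{\nu,\hat{Q}}$.)

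The paper closes exactly these gaps by running the count through the Green-formula correspondence $B\leftarrow C\rightarrow D\rightarrow{}^i{Z}_{\hat{Q},\beta}$: the fiber of $\pi_3$ packages the Hall number $g_{i\beta}^{\alpha}$ together with the automorphism and general-linear factors of the fixed sub- and quotient objects; the passage from $D$ back to $C$ contributes $|G_{\mu}|$ times the ratio $|\mathrm{Ext}(K_2,L_1)|/|\mathrm{Hom}(K_2,L_1)|=v^{-2\langle i,\nu\rangle}$ (with $K_2\cong S_i$ and $L_1$ of dimension vector $\nu$), which measures precisely the failure of the pair of filtration data to determine $x_1$; and dividing by the fiber of $\pi_1$ removes $a_{\alpha}a_{i}$. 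To repair your argument you would need either to adopt this correspondence or to carry out a Riedtmann--Peng-type evaluation of $\sum_{\lambda}N_{\lambda}/a_{\lambda}$; as written, your decomposition of the count does not correspond to a valid sequence of counting steps.
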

\begin{proof}
Consider the following maps (\cite{green1995hall,Schiffmann_Lectures1,XXZ_Ringel-Hall_algebras_beyond_their_quantum_groups})
$$
\xymatrix{
B&C\ar[l]_-{\pi_1}\ar[r]^-{\pi_2}&D\ar[r]^-{\pi_3}&{^iZ}_{\hat{Q}}
}.$$
Here
\begin{enumerate}
   \item[(1)]$B$ is set of
      $$
      \xymatrix{
      &&0\ar[d]\\
      &&L\ar[d]^{a'}\\
      0\ar[r]&N\ar[r]^{a}&J
      }$$
   where $L,N,J$ are representations of the quiver $\hat{Q}$, $a,a'$ are homomorphisms of representations and the row and column are exact,
   \item[(2)]$C$ is set of
      $$
      \xymatrix{
      &&0\ar[d]&&\\
      &&L\ar[d]^{a'}&&\\
      0\ar[r]&N\ar[r]^{a}&J\ar[d]^{b'}\ar[r]^{b}&M\ar[r]&0\\
      &&K\ar[d]&&\\
      &&0&&
      }$$
   where $L,N,J,M,K$ are representations of the quiver $\hat{Q}$, $a,a'$, $b,b'$ are homomorphisms of representations and the row and column are exact,
   \item[(3)]$D$ is set of
      $$
      \xymatrix{
      &0\ar[d]&&0\ar[d]&\\
      0\ar[r]&L_1\ar[d]^{u'}\ar[r]^{u}&L\ar[r]^{v}&L_2\ar[d]^{x}\ar[r]&0\\
      &N\ar[d]^{v'}&&M\ar[d]^{y}&\\
      0\ar[r]&K_1\ar[r]^{x'}\ar[d]&K\ar[r]^{y'}&K_2\ar[d]\ar[r]&0\\
      &0&&0&
      }$$
   where $L,N,M,K,L_1,L_2,K_1,K_2$ are representations of the quiver $\hat{Q}$, $u,u'$, $v,v'$, $x,x'$, $y,y'$ are homomorphisms of representations and the rows and columns are exact,
   \item[(4)]$\pi_1$ is the canonical projection,
   \item[(5)]$\pi_2(J,a,a',b,b')=(L_1,L_2,K_1,K_2,u,u',v,v',x,x',y,y')$, where $L_1=\textrm{Ker}b'a=\textrm{Ker}ba'$, $L_2=\textrm{Im}ba'$, $K_1=\textrm{Im}b'a$, $K_2=\textrm{Coker}b'a=\textrm{Coker}ba'$, $u,u'$ and $x,x'$ are natural embeddings, $v,v'$ and $x,x'$ are natural projections,
   \item[(6)]$\pi_1(L_1,L_2,K_1,K_2,u,u',v,v',x,x',y,y')=(L_1,L,u)$.
\end{enumerate}

Fix $N,L,M,K$ with $\underline{\dim}N=\nu,\underline{\dim}L=\nu+\beta,\underline{\dim}K=i$ and $[M]=\alpha$.
Then for any $x\in{^iZ}_{\hat{Q},\beta}$, $$|\pi_{3}^{-1}(x)|=|G_{\beta}|g_{i\beta}^{\alpha}a_{\beta}a_{i}|G_{\nu}||G_{i}|.$$
Similarly to proof of Green formula (\cite{green1995hall,Schiffmann_Lectures1,XXZ_Ringel-Hall_algebras_beyond_their_quantum_groups}), $$|\pi_{2}^{-1}\pi_{3}^{-1}(x)|=g_{i\beta}^{\alpha}a_{\beta}a_{i}|G_{\mu}|\frac{|\textrm{Ext}(K_2,L_1)|}{|\textrm{Hom}(K_2,L_1)|}
=g_{i\beta}^{\alpha}a_{\beta}a_{i}|G_{\mu}|v^{-2\langle{i,\nu}\rangle}.$$
Hence $$|\pi_{1}\pi_{2}^{-1}\pi_{3}^{-1}(x)|
=\frac{a_{\beta}}{a_{\alpha}}g_{i\beta}^{\alpha}|G_{\mu}|v^{-2\langle{i,\nu}\rangle}.$$

Since $\pi_{1}\pi_{2}^{-1}\pi_{3}^{-1}(x)=\tau^{-1}(x)$,
the cardinality of the fiber $\tau$ at $x$ is $\frac{a_{\beta}}{a_{\alpha}}g_{i\beta}^{\alpha}|G_{\mu}|v^{-2\langle{i,\nu}\rangle}.$
\end{proof}

\begin{lemma}[Theorem 3.9 (6) in \cite{zheng2008categorification}]\label{lemma_9}
For any $i\in I$,
we have
$${\mathcal{E}}^{-}_{\hat{Q},i}{\mathcal{E}}^{+}_{\hat{Q},i}-{\mathcal{E}}^{+}_{\hat{Q},i}{\mathcal{E}}^{-}_{\hat{Q},i}=
  \frac{v^2}{v^2-1}
  ({\mathcal{K}}_{\hat{Q},i}-{\mathcal{K}}^{-1}_{\hat{Q},i})$$
\end{lemma}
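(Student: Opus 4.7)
The plan is to mirror the geometric argument of Lemma \ref{lemma_6_1} verbatim, but without assuming that $V_i$ is not a direct summand of $V_\alpha$. The locus $Y_{21}$ that was shown to be empty in Lemma \ref{lemma_6_1} is in general nontrivial, and a dual count on it will supply precisely the $\mathcal{K}^{-1}_{\hat{Q},i}$ summand.

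\textbf{Step 1: Reduction to $i$ being a source.} The Fourier transform $\Phi_{\hat{Q},{^i\hat{Q}}}$ intertwines all three families in sight: $\mathcal{K}^{\pm1}_{\hat{Q},i}$ by Proposition \ref{proposition_zheng_K_1}, $\mathcal{E}^{+}_{\hat{Q},i}$ by Proposition \ref{proposition_fourier_e}, and $\mathcal{E}^{-}_{\hat{Q},\alpha}$ by combining Proposition \ref{proposition_relation_multi} with Proposition \ref{proposition_fourier_multi-1}. Both sides of the claimed identity are therefore transported to ${^i\hat{Q}}$ by $\Phi$, so I may assume $i$ is a source.

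\textbf{Step 2: Extending ${^i\mathcal{E}}^{-}_{\hat{Q},\alpha}$ and localizing.} The formula defining ${^i\mathcal{E}}^{-}_{\hat{Q},\alpha}$ still makes sense for every $\alpha\in\mathcal{P}$, and a short argument shows that if $x'\in{^iE}_{\nu',\hat{Q}}$ and $(x,x',y)\in Z_{\hat{Q},\alpha}$, then $x\in{^iE}_{\nu,\hat{Q}}$ (use injectivity of $y$ and of $x'$ at $i$), so the commutative diagram (\ref{cd_3}) continues to hold with no hypothesis on $V_\alpha$. Together with (\ref{cd_1}) and (\ref{cd_2}), this reduces the verification to the local operators ${^i\mathcal{E}}^{\pm}_{\hat{Q}}$ and ${^i\mathcal{K}}_{\hat{Q},i}$ on $\mathcal{F}_{G_{\nu,I}}({^iE}_{\nu,\hat{Q}})$.

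\textbf{Step 3: Running the correspondences.} I would set up the fibre products $Y_1$ and $Y_2$ of diagrams (\ref{cd_4}) and (\ref{cd_5}) exactly as in the proof of Lemma \ref{lemma_6_1}, decompose $Y_1=Y_{11}\sqcup Y_{12}$ by whether $\mathrm{Im}\,y\subset\mathrm{Im}\,y'$, and $Y_2=Y_{21}\sqcup Y_{22}$ by whether ${y'}^i(y^i)^{-1}x^i$ lands in $\mathrm{Im}\,{x'}^i$. The bijection $Y_{12}/G_\mu\simeq Y_{22}/G_{\mu'}$ of Lemma \ref{lemma_10} uses no hypothesis on $V_i$ and $V_\alpha$, hence the $Y_{12}$- and $Y_{22}$-contributions cancel in the commutator. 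Stratifying $Y_{11}=\bigsqcup_{\beta\in\mathcal{P}}Y_{11,\beta}$ by $[\mathrm{Im}\,y'/\mathrm{Im}\,y]=\beta$ and applying Lemma \ref{lemma_8} yields the $\mathcal{K}_{\hat{Q},i}$-summand exactly as in the special case. The new piece is a parallel stratification $Y_{21}=\bigsqcup_{\beta\in\mathcal{P}}Y_{21,\beta}$ together with a dual Green-type nine-term diagram count, in which the short exact sequence $0\to V_\beta\to V_\alpha\to V_i\to 0$ replaces the sequence $0\to V_i\to V_\alpha\to V_\beta\to 0$ used in Lemma \ref{lemma_8}; the Euler-form identity then produces the factor $v^{\langle\beta,i\rangle}g_{\beta i}^{\alpha}$ in place of $v^{-\langle\beta,i\rangle}g_{i\beta}^{\alpha}$, and the replacement of the shift $\nu'_i$ by $\mu'_i$ in the scalar prefactor converts ${^i\mathcal{K}}_{\hat{Q},i}=v^{\bar{\nu}_i-\nu_i}$ into ${^i\mathcal{K}}^{-1}_{\hat{Q},i}=v^{\nu_i-\bar{\nu}_i}$ with the opposite sign.

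\textbf{Main obstacle.} The hard part is the bookkeeping on the $Y_{21}$ stratum in Step 3: balancing the sign $(-v)^{\dim V_\alpha}$, the Euler exponents $\langle\alpha,\nu\rangle$ versus $\langle\alpha,\mu'\rangle$, the opposite shift $\nu_i-\bar{\nu}_i$, and the automorphism ratios $a_\beta/a_\alpha$, so that the contribution assembles into precisely $-\frac{|V_i|}{a_\alpha}\sum_{\beta\in\mathcal{P}}a_\beta v^{\langle\beta,i\rangle}g_{\beta i}^{\alpha}\mathcal{E}^{-}_{\hat{Q},\beta}\mathcal{K}^{-1}_{\hat{Q},i}$. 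As a consistency check, specializing to $\alpha=i$ forces the only contributing $\beta\in\mathcal{P}$ to be the class of the zero representation on each side, and the identity must reduce to Lemma \ref{lemma_9}; this specialization pins down the normalization.
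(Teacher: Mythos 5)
The paper does not actually prove this lemma: it is imported verbatim as Theorem~3.9(6) of \cite{zheng2008categorification}, where Zheng establishes the categorical (sheaf-theoretic) version; the present paper only uses it as an input, together with Lemma~\ref{lemma_6_1}, to assemble the general Relation~(6). So you are attempting a proof the author deliberately outsources, and your strategy --- rerun the correspondence argument of Lemma~\ref{lemma_6_1} for $\alpha=i$ and extract the $\mathcal{K}^{-1}_{\hat{Q},i}$ term from the stratum $Y_{21}$ that is empty in the restricted case --- is the natural one. Your Steps 1 and 2 are sound: the reduction to $i$ a source is exactly as in the paper, and the extension of ${^i\mathcal{E}}^{-}_{\hat{Q},\alpha}$ together with diagram~(\ref{cd_3}) does go through for all $\alpha$ (if $x'\in{^iE}_{\nu',\hat{Q}}$ and $(x,x',y)\in Z_{\hat{Q},\alpha}$ then $y^ix^i={x'}^iy_i$ is injective, so $x\in{^iE}_{\nu,\hat{Q}}$). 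The cancellation of the $Y_{12}$- and $Y_{22}$-contributions via Lemma~\ref{lemma_10} and the identity $v^{\langle\alpha,\nu\rangle-\nu'_i}=v^{\langle\alpha,\mu'\rangle-\mu'_i}$ (which only needs $i$ to be a source) also survives without the hypothesis on $V_\alpha$.

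The gap is that the one step carrying the entire content of the lemma --- the fiber count on $Y_{21}$ --- is not performed. You acknowledge this under ``Main obstacle,'' but it is not mere bookkeeping: $Y_{21}$ sits inside the push-out correspondence $Y_2$, where the reference maps go $\mathbf{V}^{\mu'}\hookrightarrow\mathbf{V}^{\nu}$ and $\mathbf{V}^{\mu'}\hookrightarrow\mathbf{V}^{\nu'}$ rather than into a common ambient space, so there is no ready-made analogue of Lemma~\ref{lemma_8}: you would first have to exhibit a projection $Y_{21,\beta}\to{^iZ}_{\hat{Q},\beta}$ (or an analogous variety) and redo the nine-term Green-type count for it, and you have not specified either. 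Worse, your proposed way of fixing the normalization --- ``specializing to $\alpha=i$ \ldots the identity must reduce to Lemma~\ref{lemma_9}; this specialization pins down the normalization'' --- is circular, since Lemma~\ref{lemma_9} is precisely the statement being proved and $\alpha=i$ is precisely the case where $Y_{21}\neq\emptyset$. There is also a sign of confusion in the combinatorics: with the paper's convention, $g^{\alpha}_{\beta i}$ counts subrepresentations $W\subset V_\alpha$ with $W\simeq V_i$ and $V_\alpha/W\simeq V_\beta$, i.e.\ exact sequences $0\to V_i\to V_\alpha\to V_\beta\to 0$, which is the opposite of the sequence you assign to it. To make this a proof you must either carry out the $Y_{21}$ count from scratch with an independently derived constant $\frac{v^2}{v^2-1}=\frac{q}{q-1}=\frac{|V_i|}{a_i}$, or simply cite Zheng as the paper does.
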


Now, we can give the proof of Proposition \ref{lemma_6}.
\begin{proof}[Proof of Proposition \ref{lemma_6}]
By the definition of ${\mathcal{E}}^{+}_{\hat{Q},i}$, it is enough to prove this when $i$ is a source.

At first, assume that  $\alpha\in\mathcal{P}$ such that $V_i$ is not a direct summand of $V_{\alpha}$.

Lemma \ref{lemma_6_1} and commutative diagrams (\ref{cd_1})-(\ref{cd_3}) imply that
\begin{equation}\label{formula_1}
({\mathcal{E}}^{-}_{\hat{Q},\alpha}{\mathcal{E}}^{+}_{\hat{Q},i}-{\mathcal{E}}^{+}_{\hat{Q},i}{\mathcal{E}}^{-}_{\hat{Q},\alpha})(f)=
  \frac{|V_i|}{a_{\alpha}}\sum_{\beta\in\mathcal{P}}a_{\beta}{\mathcal{E}}^{-}_{\hat{Q},\beta}
  (v^{-\langle{\beta,i}\rangle}g_{i\beta}^{\alpha}{\mathcal{K}}_{\hat{Q},i}-v^{\langle{\beta,i}\rangle}g_{\beta i}^{\alpha}{\mathcal{K}}^{-1}_{\hat{Q},i})(f)
\end{equation}
for any $f\in\mathcal{F}_{G_{\nu,I}}(E_{\nu,\hat{Q}})$ such that $\mathrm{supp}f\subset{^iE}_{\nu,\hat{Q}}$,

Formula (\ref{formula_1}) and Proposition \ref{proposition_zheng_K_2}, \ref{proposition_zheng_e_1}, \ref{proposition_N_f} imply that
\begin{equation}\label{formula_2}
({\mathcal{E}}^{-}_{\hat{Q},\alpha}{\mathcal{E}}^{+}_{\hat{Q},i}-{\mathcal{E}}^{+}_{\hat{Q},i}{\mathcal{E}}^{-}_{\hat{Q},\alpha})([f])=
  \frac{|V_i|}{a_{\alpha}}\sum_{\beta\in\mathcal{P}}a_{\beta}{\mathcal{E}}^{-}_{\hat{Q},\beta}
  (v^{-\langle{\beta,i}\rangle}g_{i\beta}^{\alpha}{\mathcal{K}}_{\hat{Q},i}-v^{\langle{\beta,i}\rangle}g_{\beta i}^{\alpha}{\mathcal{K}}^{-1}_{\hat{Q},i})([f])
  \end{equation}
for any $[f]\in\mathcal{F}_{\nu,\hat{Q}}$.

In general, for any $\alpha\in\mathcal{P}$, $V_{\alpha}\simeq kV_i\oplus V_{\alpha'}$ such that $V_i$ is not a direct summand of $V_{\alpha'}$, we have $${\mathcal{E}}^{-}_{\hat{Q},\alpha}=v^{-\langle\alpha',ki\rangle}{\mathcal{E}}^{-}_{\hat{Q},\alpha'}{\mathcal{E}}^{-}_{\hat{Q},ki}.$$
Hence, Formula (\ref{formula_2}) and Lemma \ref{lemma_9} imply the desired result.
\end{proof}

\bibliography{mybibfile}

\end{document}